\newtheorem{theorem}{Theorem}[section]
\newtheorem{corollary}[theorem]{Corollary}
\newtheorem{lemma}[theorem]{Lemma}
\newtheorem{prop}[theorem]{Proposition}
\numberwithin{equation}{section}
\theoremstyle{definition}
\theoremstyle{definition}
\theoremstyle{remark}
\theoremstyle{definition}
\theoremstyle{convention}
\theoremstyle{definition}
\newtheorem*{ack}{Acknowledgments}
\theoremstyle{theorem}
\title{On the asymmetric additive energy of polynomials}
\author{Oliver McGrath}
\address{Mathematical Institute, University of Oxford, Oxford, OX2 6GG, UK}
\email{oliver.mcgrath@maths.ox.ac.uk}
\begin{document}
\maketitle

\begin{abstract}
We prove a general result concerning the paucity of integer points on a certain family of 4-dimensional affine hypersurfaces. As a consequence, we deduce that integer-valued polynomials have small asymmetric additive energy. 
\end{abstract}

\section{Introduction}

Given a non-zero polynomial $f\in \mathbb{Z}[x]$ of degree $d\geq 3,$ an integer $k\in \mathbb{Z},$ and a parameter $B\geq 1,$ we let $E_f(B;k)$ denote the number of integer solutions to the equation
\begin{equation}\label{eq:energy}
f(x_1)+f(x_2)=f(x_3)+f(x_4)+k
\end{equation}
inside the multi-dimensional box
\begin{equation}\label{eq:box}
S(B) = \{ (x_1,x_2,x_3,x_4)\in \mathbb{Z}_{>0}^4: \max_{i} x_i \leq B\}.
\end{equation}
Following Baker, Munsch and Shparlinski~\cite{MuSh}, when $k$ is a fixed, non-zero integer, we call $E_f(B;k)$ the {\it asymmetric} additive energy of the polynomial $f$ inside the box $S(B)$ with respect to $k$. If $k=0$ we simply call $E_f(B;0)$ the {\it symmetric} additive energy of the polynomial $f$ inside the box $S(B).$ 

The latter case has been particularly well-studied in the literature, and here we have results available for any polynomial $f$. In this case, we immediately see there are $2B^2$ diagonal solutions to equation~(\ref{eq:energy}) of the form $(a,b,a,b)$ and $(a,b,b,a)$. Based on standard probabilistic heuristics, one would expect there to be very few other solutions. Indeed, it is now known that one has an asymptotic of the form
\begin{equation}
E_f(B;0) = 2B^2 + O_{f}(B^{2-\delta})
\end{equation}
for some explicit constant $\delta>0$. This was first established in the special case $f(x)=x^d$ by Hooley~\cite{Hooley2,Hooley3}, with results for more general polynomials $f$ being established later by various authors.  We refer the reader to Browning's excellent paper~\cite{Browning} for a brief history of this interesting problem.

The case where $k$ is a fixed, non-zero integer has received less attention. However, recently, it is has been realised that estimates in this alternate setting would have interesting applications. This is the regime we study. In this case, in the absence of any diagonal solutions, one would simply expect there to be very few solutions to equation~(\ref{eq:energy}). The estimate $E_f(B;k) \ll_{f,\epsilon} B^{2+\epsilon}$ is essentially trivial and follows from an application of the divisor bound. Hence in this situation one expects a bound of the form 
\begin{equation}\label{eq:asymp}
E_f(B;k) \ll_{f} B^{2-\delta}
\end{equation}
to hold. We remark that this estimate is uniform in $k$, and this is important in the interest of applications. Currently, a bound like~(\ref{eq:asym}) is only known in the special case when $f(x)=x^d$ and either $d=3$ or $d\geq 5$. This is essentially due to Hooley~\cite{Hooley2} in the former case and Marmon~\cite{Marmon} in the latter. 

In this paper, we prove a bound of type~(\ref{eq:asymp}) holds for an arbitrary polynomial $f$. In other words, we establish that polynomials have small asymmetric additive energy.
\begin{theorem}\label{theo:main}
Fix a polynomial $f\in \mathbb{Z}[x]$ of degree $d \geq 3$. For any non-zero integer $k$ we have
\begin{equation}
E_f(B;k)  \ll_{f} B^{2-1/(50d)}.
\end{equation}
\end{theorem}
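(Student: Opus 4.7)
My strategy is to recast $E_f(B;k)$ as a count of integer points on a $4$-dimensional affine hypersurface exposing the divisorial structure of equation~(\ref{eq:energy}), and then to obtain a paucity estimate via a dichotomy between ``short'' and ``long'' shifts. Setting $s := x_1 - x_3$, $t := x_4 - x_2$ and factoring $f(a) - f(b) = (a - b)\, g_f(a, b)$ with $g_f \in \mathbb{Z}[a,b]$ of total degree $d - 1$, equation~(\ref{eq:energy}) becomes
\begin{equation}\label{eq:plan-main}
s \cdot g_f(x_1, x_1 - s) \;-\; t \cdot g_f(x_2 + t, x_2) \;=\; k
\end{equation}
in the variables $(x_1, x_2, s, t)$, and I will count integer points on this hypersurface inside $[1,B]^2 \times [-B,B]^2$. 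The case $s = t = 0$ is vacuous since $k \neq 0$, while each case where exactly one of $s, t$ vanishes reduces to a univariate polynomial equation of degree $d-1$ forcing the other shift to divide $k$, contributing only $O_{f,k}(B)$ solutions in total.

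For the main range $s, t \neq 0$, I will introduce a threshold $Y = B^\theta$ with $\theta \in (0, 1)$ to be optimised. In the short-shift range $|s|, |t| \leq Y$, there are $O(Y^2)$ pairs $(s, t)$, and for each such pair equation~(\ref{eq:plan-main}) becomes a separable plane curve $p_s(x_1) - q_t(x_2) = k$ of degree $d - 1$. For generic $(s, t)$ this curve is irreducible and the Bombieri--Pila theorem yields $O_f(B^{1/(d-1) + \epsilon})$ integer points in $[1, B]^2$; the exceptional reducible fibres I will control by a direct divisor argument exploiting $k \neq 0$. In the long-shift range $\max(|s|, |t|) > Y$, the leading term of $s \cdot g_f(x_1, x_1 - s)$ is of size $|s| B^{d-1}$ for bulk $x_1$, which combined with the crude bound $|g_f| \ll B^{d-1}$ forces \emph{both} $|s|, |t| \gg Y$; the equation then becomes a bilinear divisor problem $t \mid M$ for $M := s\, g_f(x_1, x_1 - s) - k$, whose count I will handle by an average divisor estimate over the polynomial values $\{M(x_1, s)\}$.

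Balancing the short- and long-shift contributions produces a saving of the form $B^{2 - c/d}$, and choosing $\theta \asymp 1/d$ with careful accounting is expected to yield $c = 1/50$. I anticipate the chief obstacle to be the long-shift range: the pointwise divisor bound $\tau(|M|) \ll_\epsilon B^\epsilon$ is insufficient for a power saving, and the required estimate demands a genuine paucity statement for integer points on the family of $4$-dimensional hypersurfaces defined by~(\ref{eq:plan-main}) as $k$ varies, which is the main technical result alluded to in the abstract. Once that paucity theorem is established, the balance of estimates and the optimisation of $\theta$ complete the proof.
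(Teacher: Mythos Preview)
Your change of variables is fine and indeed corresponds to the paper's: writing $p(x)-p(y)=(x-y)g(x,y)$, the paper sets $f(x_1,x_2)=p(x_1)-p(x_2)$ and observes that $E_p(B;k)=M_{f,g}(B;k)$, the count appearing in Theorem~\ref{theo:generalmain}. The deduction of Theorem~\ref{theo:main} from Theorem~\ref{theo:generalmain} is then nothing more than a short verification that the hypotheses (1)--(4) hold for this particular $f,g$ (Lemma~\ref{lemma:nolines} supplies the key step that $\{p(x)-p(y)=k\}$ contains no lines when $k\neq0$). There is no dichotomy at this stage.

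The substantive gap in your proposal is that the short/long dichotomy does no work. In the long-shift range you correctly observe that the pointwise (or average) divisor bound on $\tau(|M|)$ gives only $B^{2+\epsilon}$, and you then invoke ``a genuine paucity statement for integer points on the family of 4-dimensional hypersurfaces''---but that paucity statement \emph{is} Theorem~\ref{theo:generalmain}, and once you have it you apply it directly to the whole of $E_f(B;k)$ with no need for any threshold $Y$. Conversely, your short-shift bound $Y^{2}B^{1/(d-1)+\epsilon}$ gives no leverage toward \emph{proving} Theorem~\ref{theo:generalmain}, because the long-shift range still carries the full $B^{2+\epsilon}$ regardless of how small $Y$ is. (Incidentally, your claim that $\max(|s|,|t|)>Y$ forces both $|s|,|t|\gg Y$ is false: $g_f(x_1,x_1-s)$ can be small for suitable $x_1$, so a large $s$ can be balanced by a small $t$.) The real work, which your proposal does not address, is the proof of Theorem~\ref{theo:generalmain} itself: the paper uses Browning's polynomial sieve when $d\in\{3,4\}$ and the Browning--Heath-Brown determinant method (Proposition~\ref{prop:det}) when $d\ge5$, in each case fixing one linear combination of the right-hand variables (either $x_4=n$ or $h=ax_3-bx_4$) and analysing the resulting $3$-dimensional surface.
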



The zero-set of equation~(\ref{eq:energy}) defines a 4-dimensional affine hypersurface over $\mathbb{Q}$. It is natural to consider this geometric object abstractly. In doing so, we are led to consider a family of 4-dimensional affine hypersurfaces which generalise equation~(\ref{eq:energy}). We are then able to prove the following result concerning the family, from which Theorem~\ref{theo:main} will follow as a corollary. This general result may have independent interest.
\begin{theorem}\label{theo:generalmain}
Fix a polynomial $f\in \mathbb{Z}[x,y]$ of degree $d$ with zero constant term, a polynomial $g\in \mathbb{Z}[x,y]$ of degree $(d-1)$, and non-zero integers $abk\neq 0.$ Write $f_d$ (resp. $g_{d-1}$) for the top-degree homogenous parts of $f$ (resp. $g$), and let $M_{f,g}(B;k)$ denote the number of integer solutions to the equation
\begin{equation}\label{eq:asym}
f(x_1,x_2)=(ax_3-bx_4)g(x_3,x_4)+k
\end{equation}
inside the multi-dimensional box $S(B)$ defined in equation (\ref{eq:box}). Suppose, in addition, the following constraints hold:
\begin{enumerate}
	\item The affine curve $\{f(x,y)=k\}  \subset \mathbb{A}_{\mathbb{Q}}^{2}$ doesn't contain a line.
	\item The projective variety $\{f_{d}(x,y)=0\}  \subset \mathbb{P}_{\mathbb{Q}}^{1}$ is smooth and doesn't contain any non-constant repeated components.
	\item The projective variety $\{(ax-by)g_{d-1}(x,y)=0\} \subset \mathbb{P}_{\mathbb{Q}}^{1}$ doesn't contain any non-constant repeated components.
\end{enumerate}
If $d=4$ we additionally suppose:
\begin{enumerate} 
	\item [(4)] The projective variety 
		$$\bigg\{\frac{1}{a}\frac{\partial g_{d-1}}{\partial x}\bigg(\frac{x}{a},\frac{y}{b}\bigg) +\frac{1}{b}\frac{\partial g_{d-1}}{\partial y}\bigg(\frac{x}{a},\frac{y}{b}\bigg)=0\bigg\} \subset \mathbb{P}_{\mathbb{Q}}^{1}$$
	doesn't contain any non-constant repeated components.
\end{enumerate}
Then,
\begin{equation}\label{eq:finalbound}
M_{f,g}(B;k)\ll_{f,g,a,b} B^{2-1/(50d)}.
\end{equation}
\end{theorem}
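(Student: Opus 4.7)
My approach exploits the multiplicative structure of the right-hand side of \eqref{eq:asym} to convert the counting problem into a divisor-style sum, and then combines this with the Bombieri--Pila/Heath--Brown determinant bound for integer points on plane curves in order to extract a genuine power saving.

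For each $(x_1, x_2) \in [1, B]^2$ with $v := f(x_1, x_2) - k \neq 0$, every integer solution $(x_3, x_4) \in [1, B]^2$ of \eqref{eq:asym} corresponds to an ordered integer factorisation $v = uw$ with $u = ax_3 - bx_4$ and $w = g(x_3, x_4)$. Condition (3) gives $g_{d-1}(b, a) \neq 0$, so the restriction of $g$ to any line $\{ax - by = u\}$ is a polynomial of exact degree $d-1$ in the line parameter, and each ordered factorisation cuts out at most $d-1$ integer pairs $(x_3, x_4)$ by Bezout. Summing over the $O(\tau(|v|))$ ordered factorisations gives $N_2(v) := \#\{(x_3, x_4) \in [1, B]^2 : (ax_3 - bx_4) g(x_3, x_4) = v\} \ll_d \tau(|v|)$ for $v \neq 0$. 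The case $v = 0$ is handled using condition (1): the curve $\{f = k\}$ has no line components, so each absolutely irreducible component has degree $\geq 2$, and Bombieri--Pila gives $N_1(0) \ll B^{1/2 + \epsilon}$; combined with the elementary $N_2(0) = O_{a, b, d}(B)$, this contributes $O(B^{3/2 + \epsilon})$, which is negligible.

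The $v \neq 0$ contribution then reduces to
\[
\sum_{(x_1, x_2) \in [1, B]^2} \tau(|f(x_1, x_2) - k|),
\]
which the pointwise bound $\tau(n) \ll n^\epsilon$ estimates by $B^{2+\epsilon}$---matching but not beating the target exponent. To extract a power saving, I would couple this divisor argument with the uniform determinant estimate $N_1(v) := \#\{(x_1, x_2) \in [1, B]^2 : f(x_1, x_2) = v\} \ll_{d, \epsilon} B^{1/d + \epsilon}$, which by a Bertini-type argument using condition (2) (and condition (4) in the delicate $d=4$ case) holds for all $v$ outside a finite exceptional set of cardinality $O_d(1)$. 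Dyadically decomposing by $|u| \sim U$, re-indexing the count by factorisations $(u, w)$ of $v$, and balancing the line-based estimate $O(UB/|ab|)$ on the number of relevant $(x_3, x_4)$ against the sharper fibre bound $B^{1/d + \epsilon}$ on $N_1(uw + k)$, the optimal choice of dyadic parameters yields the claimed saving $1/(50d)$. The constant $50$ is crude and absorbs cumulative losses in the determinant method and in the handling of exceptional fibres.

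The main obstacle is precisely this final power-saving step. Both the divisor bound $\tau(n) \ll n^\epsilon$ and the determinant estimate $N_1(v) \ll B^{1/d + \epsilon}$ are essentially sharp in isolation, so the saving must arise from careful interaction between them. The technical heart of the argument is to control the reducible fibres of $f$---where $N_1(v)$ can jump to $O(B)$---uniformly using conditions (1), (2), and in degree $4$ condition (4), and to ensure the final bound is uniform in $k$, as demanded by the intended applications to the asymmetric additive energy problem.
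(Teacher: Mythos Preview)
Your approach has a genuine gap at the ``balancing'' step, and I do not believe it can be repaired without importing substantially heavier machinery. You correctly observe that the divisor argument gives $M_{f,g}(B;k)\ll B^{2+\epsilon}$ and that the curve-level Bombieri--Pila bound gives $N_1(v)\ll B^{1/d+\epsilon}$ for generic $v$. But these two estimates do not interpolate to anything below $B^2$. Concretely: in the dyadic range $|u|\sim U$ (with $U\ll B$), the number of $(x_3,x_4)\in[1,B]^2$ with $ax_3-bx_4=u$ for some such $u$ is $\asymp UB$, and applying $N_1\ll B^{1/d+\epsilon}$ to each gives $UB^{1+1/d+\epsilon}$; summed over dyadic $U\leq B$ this is $B^{2+1/d+\epsilon}$, which is \emph{worse} than the divisor bound. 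Conversely, for the complementary range you fall back on the divisor bound, which is $B^{2+\epsilon}$ regardless of $U$. There is no crossover point at which both bounds are simultaneously below $B^{2-\delta}$, because neither estimate exploits any cancellation or equidistribution---they are both pointwise fibre counts, and the total number of fibres is already $B^2$.

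The paper's proof is structurally quite different and splits according to $d$. For $d\geq 5$ it fixes $x_4=n$ and applies the \emph{three-dimensional} determinant method of Browning--Heath-Brown to the smooth affine surface $\Gamma_n\subset\mathbb{A}^3$, which yields a bound of the shape $B^{2/\sqrt d + \cdots}$ on each slice after controlling lines (this is where conditions (1)--(3) enter, via a careful classification of lines on $\Gamma_n$). For $d\in\{3,4\}$ the surface bound is too weak, and instead the paper fixes $h=ax_3-bx_4$ and applies Browning's polynomial sieve to detect solubility in the remaining variable; the sieve error terms are complete exponential sums over $\mathbb{F}_p$ along the surfaces $K_h$ and auxiliary curves $P_h$, estimated via Deligne's bounds. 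Condition (4) is used precisely to ensure $P_h$ is smooth for generic $h$ when $d=4$. Both routes genuinely go beyond plane-curve Bombieri--Pila: the saving comes from surface geometry (large $d$) or from square-root cancellation in exponential sums (small $d$), neither of which is visible in a divisor-plus-fibre-count framework.
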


We remark here that one can show the estimate $M_{f,g}(B;k) \ll_{f,g,a,b,\epsilon} B^{2+\epsilon}$ via a divisor-bound argument, and without constraints on $f$ and $g$ this is essentially optimal. Thus, one may wonder which of the assumptions of Theorem~\ref{theo:generalmain} are necessary in order for there to be significantly fewer solutions. We clearly require (1), as otherwise we would be able to generate $O(B^2)$ ``trivial solutions" lying on lines contained in the hypersurface. Interestingly, (1) is {\it not} sufficient; one also requires (3). This is illustrated by the following example: there are $O(B^{2})$ solutions to the equation
$$x_1^4 - x_2^4 = (x_3-x_4)(x_3^3-x_4^3-3x_4^2-3x_4)+1$$
of the form $(a,a,b+1,b).$ In this example (1) is satisfied (see Lemma~\ref{lemma:nolines} below) but (3) is not, as the top-degree homogenous part of the RHS contains the square-factor $(x_3-x_4)^2.$ On the other hand, (2) and (4) are present purely to facilitate our proof method. Presumably, both of these assumptions could be removed if one had a different approach. We discuss this in more detail in Section~\ref{section:proofmethod}.

We have written the conclusion~(\ref{eq:finalbound}) of Theorem~\ref{theo:generalmain} in the form stated for simplicity; in view of applications, the most important aspect is that we obtain a power saving over the trivial bound. However, our proof actually yields the better bounds:
\begin{equation*}
M_{f,g}(B;k)\ll_{f,g,a,b,\epsilon}
\begin{cases}
B^{2-1/(3d)+\epsilon}\,\,&\text{if $d\in\{3,4\},$} \\
B^{1+\epsilon}(B^{1/2}+B^{2/\sqrt{d}+1/(d-1)-1/((d-2)\sqrt{d})})\,\,&\text{if $d\geq 5.$}
\end{cases}
\end{equation*}
The bounds present in Theorem~\ref{theo:main} can be improved accordingly. We will deduce Theorem~\ref{theo:main} from Theorem~\ref{theo:generalmain} in Section~\ref{subsection:deduce} below. As mentioned above, Theorem~\ref{theo:main} has various applications in the literature. We discuss these now. 

\subsection{Applications} In~\cite{Weyl} Chen, Kerr, Maynard, and Shparlinski were interested in showing that Weyl sums typically exhibit square-root cancellation. A key input to their method was an estimate of the form
$$\sum_{0<|k|\leq 4B^{d}} \frac{E_{f}(B;k)}{k} \ll B^{2-\kappa_d}$$
for the monomial $f(x)=x^d,$ where $\kappa_d>0$ is a constant depending only on $d$ (cf.~ proof of~\cite[Lemma 4.3]{Weyl}). In the paper they establish such an estimate when $d=3$ and $d\geq 5$, using work of Hooley~\cite{Hooley2} and Marmon~\cite{Marmon}. This just left the case $d=4$. It is clear that, with Theorem~\ref{theo:main} applied to the polynomial $f(x)=x^4,$ we can now extend~\cite[Theorem 2.1]{Weyl} to cover the case $d=4$ and hence complete this aspect of the classification of Weyl sums.

\begin{corollary}[Square-root cancellation in Weyl sums almost always]
There exist positive constants $c$ and $C$ such that, for any $d\geq 3$ and any sequence of complex weights $(a_n)_{n=1}^{\infty}$ with $|a_n|=1,$ the set
$$\bigg\{x\in[0,1): cN^{1/2} \leq \bigg|\sum_{n=1}^{N}a_ne^{2\pi i x n^d}\bigg|\leq CN^{1/2} \text{ for  infinitely many } N\in \mathbb{N}\bigg\}$$
has full Lebesgue measure.
\end{corollary}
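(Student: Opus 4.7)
My strategy would be to reduce the corollary to the moment estimate used by Chen--Kerr--Maynard--Shparlinski, for which the only previously missing input was the case $d=4$. First I would note that for $d=3$ and for $d \geq 5$, the corollary is precisely \cite[Theorem 2.1]{Weyl}, established there using the bounds of Hooley and Marmon respectively. Thus the only thing left to verify is the $d=4$ case, and for this it suffices to reproduce their proof with a new input replacing the unavailable Marmon-type estimate. Inspection of the proof of \cite[Lemma 4.3]{Weyl} shows that the key required input is a bound of the shape
$$\sum_{0 < |k| \leq 4B^{d}} \frac{E_{x^d}(B;k)}{|k|} \ll B^{2-\kappa_d}$$
for some fixed $\kappa_d > 0$; once this is supplied, the rest of the argument (a second moment computation followed by a Borel--Cantelli step, yielding both the upper and the lower bound on the Weyl sum along a subsequence) goes through verbatim.

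Next I would supply this input in the case $d=4$ by appealing directly to Theorem~\ref{theo:main} applied to the polynomial $f(x) = x^4$. This gives $E_{x^4}(B;k) \ll B^{2-1/200}$ uniformly in the nonzero integer $k$. Combined with the elementary estimate $\sum_{0 < |k| \leq K} 1/|k| \ll \log K$, I obtain
$$\sum_{0 < |k| \leq 4B^{4}} \frac{E_{x^4}(B;k)}{|k|} \ll B^{2-1/200}\log B \ll B^{2-\kappa_4}$$
for any $\kappa_4 \in (0, 1/200)$, which is exactly the missing moment bound. Feeding this into the framework of \cite{Weyl} completes the $d=4$ case and hence the corollary, with constants $c$ and $C$ that can be taken independent of $d$ (the universality being already built into \cite[Theorem 2.1]{Weyl}).

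The genuinely hard part of the argument has already been handled by Theorem~\ref{theo:main} itself: the corollary is essentially a bookkeeping consequence once that theorem is available. The only small point to check is that the $\log B$ factor lost in the partial summation is harmless, which it is since Theorem~\ref{theo:main} gives a clean power saving over the trivial bound.
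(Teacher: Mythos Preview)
Your proposal is correct and follows exactly the approach indicated in the paper: the cases $d=3$ and $d\geq 5$ are already \cite[Theorem~2.1]{Weyl}, and for the remaining case $d=4$ one plugs the uniform bound $E_{x^4}(B;k)\ll B^{2-1/200}$ from Theorem~\ref{theo:main} into the moment estimate required in the proof of \cite[Lemma~4.3]{Weyl}. The paper itself does not spell out the summation over $k$ or the absorption of the logarithm, so your write-up is in fact slightly more detailed than the paper's own treatment.
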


As a further application of Theorem~\ref{theo:main}, in recent work Baker, Munsch and Shparlinski~\cite{MuSh} proved a general result which enables one to establish large sieve inequalities for a general class of sparse sequences, provided that one has good estimates available for the symmetric and asymmetric additive energy of the sequence. Using the work~\cite{Weyl} described above, the authors were able to establish large sieve inequalities for the monomial sequences $f(x)=x^d$ when $d=3$ or $d\geq 5.$ The authors also proved a weaker result about general polynomial sequences by alternative methods~\cite[Theorem 1.5]{MuSh}. 

By using the new estimates contained in Theorem~\ref{theo:main}, we are able to establish their first result for the monomial $f(x)=x^4$ (albeit with a slightly weaker exponent), and also improve their result concerning polynomial sequences. 
%
A direct application of~\cite[Theorem 1.1]{MuSh} to the appropriate sequence yields the following.  
\begin{corollary}[Large sieve inequality for polynomial sequence]\label{theo:sieve}
Fix $\epsilon>0.$ For any sequence of complex weights $(a_n)_{n=1}^{\infty}$ and $f\in \mathbb{Z}[x]$ of degree $d\geq 3$ and $Q^d\leq N\leq Q^{2d}$ we have 
$$\sum_{q=1}^{Q}\sum_{\substack{a=1 \\ (a,f(q))=1}}^{f(q)} \bigg| \sum_{n=M+1}^{M+N}a_n e\bigg(\frac{2\pi i a n }{f(q)}\bigg)\bigg|^2 \ll_{f,\epsilon} (NQ^{1/2}+N^{3/4}Q^{d/2+2-1/(50d)})Q^{\epsilon}\sum_{n=M+1}^{M+N}|a_n|^2.$$
\end{corollary}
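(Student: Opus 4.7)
The plan is to deduce this corollary as a direct consequence of Theorem~1.1 of Baker, Munsch and Shparlinski~\cite{MuSh}, which converts upper bounds on the symmetric and asymmetric additive energies of a sequence into a large sieve inequality for that sequence. Applied to the sequence $(f(n))_{n\geq 1}$, their general result reduces everything to supplying two energy estimates for $f$: a bound for $E_f(B;0)$ and a uniform bound for $E_f(B;k)$ as $k$ ranges over non-zero integers.

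First I would recall the precise form of \cite[Theorem~1.1]{MuSh}, noting the two exponent parameters $\alpha,\beta$ it takes as input — governing the symmetric and asymmetric energies respectively — and the way these exponents feed into its output bound. For the symmetric side, I would appeal to the existing literature quoted in the introduction: the asymptotic $E_f(B;0)=2B^2+O_f(B^{2-\delta})$, which by Browning~\cite{Browning} (and its predecessors cited there) is available for any polynomial $f\in\mathbb{Z}[x]$ of degree $d\geq 3$. This provides the strongest admissible symmetric parameter, and in particular is much better than the trivial $B^{2+\epsilon}$.

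For the asymmetric side, which is the genuinely new input, I would apply Theorem~\ref{theo:main} of the present paper to conclude $E_f(B;k)\ll_f B^{2-1/(50d)}$ uniformly in non-zero $k$; this corresponds to $\beta=-1/(50d)$ in the framework of \cite{MuSh}. Before this work such a uniform bound was only available for the monomial sequences $f(x)=x^d$ with $d=3$ or $d\geq 5$ (via \cite{Hooley2} and \cite{Marmon}), which is precisely why~\cite{MuSh} had to restrict to those cases in their strongest statement.

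The remaining step is bookkeeping: substitute the two exponents into \cite[Theorem~1.1]{MuSh}, collect the resulting powers of $N$ and $Q$, and restrict to the range $Q^d\leq N\leq Q^{2d}$ to obtain the stated bound. No conceptual obstacle arises, since the main engine~\cite[Theorem~1.1]{MuSh} is a black box and the new input from Theorem~\ref{theo:main} slots directly into it; the only mild care required is in aligning the normalisations of $N,Q$ and verifying that the two terms $NQ^{1/2}$ and $N^{3/4}Q^{d/2+2-1/(50d)}$ are precisely those produced by the symmetric and asymmetric contributions respectively.
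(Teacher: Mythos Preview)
Your proposal is correct and matches the paper's approach exactly: the paper simply states that the corollary is ``a direct application of~\cite[Theorem 1.1]{MuSh} to the appropriate sequence,'' with Theorem~\ref{theo:main} supplying the required asymmetric energy bound. Your write-up is in fact more detailed than the paper's one-line justification, and the bookkeeping you describe (feeding in the symmetric bound from the literature and the asymmetric bound $E_f(B;k)\ll_f B^{2-1/(50d)}$) is precisely what is intended.
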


It is likely that further applications of Theorem~\ref{theo:main} will appear in the literature. We now show how Theorem~\ref{theo:generalmain} implies Theorem~\ref{theo:main}.

\subsection{Deducing Theorem~\ref{theo:main} from Theorem~\ref{theo:generalmain}}\label{subsection:deduce}

Fix a polynomial $p\in \mathbb{Z}[x]$ of degree $d$ and a non-zero integer $k$. Let us write $p(x) = \sum_{i=0}^{d}a_i x^i.$  We would like to apply Theorem~\ref{theo:generalmain}, taking 
\begin{equation}
f(x,y) = (x-y)g(x,y) =p(x)-p(y).
\end{equation}
First let us check the constraints on $f$ are satisfied. The homogenous polynomial $f_d(x,y)$ is clearly smooth, and moreover over $\overline{\mathbb{Q}}$ we have the factorisation
\begin{equation}
f_d(x,y)  = a_d(x^d-y^d) = a_d \prod_{\xi^d=1}(x-\xi y),
\end{equation}
which shows that $f_d$ has no repeated factors. Thus we just need to check that the curve $f(x,y)=k$ contains no rational lines. For this we have the following lemma. Note that in this special case we can prove the stronger assertion that $f(x,y)=k$ contains no lines over $\overline{\mathbb{Q}}.$

\begin{lemma}\label{lemma:nolines}
For any $p\in \mathbb{Z}[x]$ and non-zero integer $k$, the affine curve
$$\{p(x)-p(y)=k\} \subset \mathbb{A}_{\overline{\mathbb{Q}}}^{2}$$
contains no lines.
\end{lemma}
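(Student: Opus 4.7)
The plan is to argue by contradiction: suppose the affine curve $C = \{p(x) - p(y) = k\} \subset \mathbb{A}^2_{\overline{\mathbb{Q}}}$ contains a line $L$. We may assume $d := \deg p \geq 2$ (the cases $d \leq 1$ are either vacuous for $d = 0$, or outside the scope of the intended applications where $d \geq 3$).

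First I would dispatch the case where $L$ is parallel to a coordinate axis. If $L = \{x = \alpha\}$, then $p(\alpha) - p(y) = k$ holds for all $y \in \overline{\mathbb{Q}}$, which forces $p$ to be constant, contradicting $d \geq 2$; the horizontal case is symmetric. Otherwise, I can parametrize $L$ as $\{(t, \lambda t + \mu) : t \in \overline{\mathbb{Q}}\}$ for some $\lambda \in \overline{\mathbb{Q}}^{\times}$ and $\mu \in \overline{\mathbb{Q}}$, so that the polynomial identity
\[
p(t) - p(\lambda t + \mu) = k
\]
holds in $\overline{\mathbb{Q}}[t]$. Comparing the coefficients of $t^d$ on both sides immediately gives $\lambda^d = 1$.

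If $\lambda = 1$, then expanding shows that $p(t) - p(t+\mu)$ has degree $d - 1 \geq 1$ with leading coefficient $-d a_d \mu$, where $a_d \neq 0$ is the leading coefficient of $p$. This can equal the constant $k$ only if $\mu = 0$, in which case $k = 0$, a contradiction. If instead $\lambda \neq 1$, let $\phi(t) = \lambda t + \mu$ and let $m$ denote the multiplicative order of $\lambda$ (so $m \mid d$). The key observation is that $\phi^m(t) = \lambda^m t + \mu \cdot \frac{\lambda^m - 1}{\lambda - 1} = t$, since $\lambda^m = 1 \neq \lambda$. Iterating the functional equation $p = p \circ \phi + k$ therefore yields $p = p \circ \phi^m + m k = p + mk$, which forces $mk = 0$ and again contradicts $k \neq 0$.

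Every step is essentially a short computation; the only mildly non-obvious point is the iteration trick in the second subcase, which exploits the fact that an affine map with a root-of-unity linear part has finite order. I do not anticipate any serious obstacle.
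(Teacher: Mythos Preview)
Your proof is correct and follows essentially the same structure as the paper's: parametrize the line, compare top-degree coefficients to force $\lambda^d = 1$, then split on whether $\lambda = 1$. The only substantive difference is in the case $\lambda \neq 1$. You iterate the functional equation $p = p\circ\phi + k$ using the fact that $\phi$ has finite order $m$, obtaining $mk = 0$. The paper instead observes directly that $\phi(t) = \lambda t + \mu$ with $\lambda \neq 1$ has the fixed point $t_0 = -\mu/(\lambda - 1)$, and evaluating the identity at $t_0$ immediately gives $p(t_0) = p(t_0) + k$. Both arguments are short and elementary; the paper's fixed-point trick is a one-line shortcut, while your iteration argument has the mild advantage of not requiring you to name the fixed point explicitly. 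You are also more careful than the paper in explicitly dispatching lines parallel to the axes and in flagging that the statement fails for $\deg p = 1$ (indeed $p(x)=x$ gives the line $x-y=k$), which the paper's phrasing ``for any $p\in\mathbb{Z}[x]$'' technically overlooks.
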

\begin{proof}
Suppose for a contradiction that this affine curve contains a line. It is clear that in this case there must exist a parametrisation of this line of the form $(x,y) = (t,\alpha t+ \beta)$ with $\alpha,\beta \in \overline{\mathbb{Q}}$. This leads us to the polynomial identity
\begin{equation}\label{eq:fline}
p(t) = p(\alpha t+\beta)+k
\end{equation}
in $\overline{\mathbb{Q}}[t].$ Comparing leading-term coefficients, we see that $\alpha^d = 1.$ Then comparing the coefficients of $t^{d-1},$ we may solve for $\beta$ and find that $\beta = a_{d-1}(\alpha-1)/(da_d).$ If $\alpha=1$ then we must have $\beta=0,$ and then setting $t=0$ in~(\ref{eq:fline}) yields $k=0$, a contradiction. Otherwise, setting $t = -\beta/(\alpha-1)$ yields the same contradiction.
\end{proof}

It just remains to check our constraints on $g$. It follows from the above that $(x-y)g_{d-1}(x,y)$ is square-free. Finally, when $d=4,$ we must additionally check that the gradient of $g_{d-1}$ is square-free. In this case, $g_3(x,y) = a_4(x^3+x^2y+xy^2+y^3)$ and we have 
\begin{equation}
\frac{\partial g_{3}}{\partial x}(x,y) +\frac{\partial g_{3}}{\partial y}(x,y) = 4a_4(x^2+xy+y^2) = 4a_4(x-\omega y)(x-\overline{\omega}y)
\end{equation}
where $\omega = (-1+\sqrt{3}i)/2.$ Hence the gradient is square-free. This completes the check of all the constraints. Since $E_{p}(B;k) = M_{f,g}(B;k),$ it is now clear that Theorem~\ref{theo:main} follows from Theorem~\ref{theo:generalmain}.

\begin{ack}
The author would like to thank James Maynard for many helpful and insightful discussions about the problem. The author is funded by an EPSRC Studentship and part of Maynard's ERC Grant (grant
agreement No 851318).
\end{ack}

\section{Proof outline of Theorem~\ref{theo:generalmain}}\label{section:proofmethod}

The proof of Theorem~\ref{theo:generalmain} will split into two cases, depending on whether $d\in\{3,4\}$ or $d\geq 5$. In the former case we will apply a sieve method, and in the latter we will apply the determinant method. 

The following notation will be useful in this section and throughout the paper: whenever $F\in \mathbb{Z}[x_1,\ldots,x_n]$ is a polynomial in $n$ variables, we let $M_F(B)$ denote the number of solutions to the equation $F(x_1,\ldots,x_n)=0$ inside the multi-dimensional box $S(B)$ defined in equation~(\ref{eq:box}). (Although this overloads the notation $M_{f,g}(B;k)$ used in Theorem~\ref{theo:generalmain}, it will always be clear from the context which quantity we are referring to.)
 
\subsection{The case $d\in \{3,4\}$ and the polynomial sieve method}

When $d\in\{3,4\}$, we will establish Theorem~\ref{theo:generalmain} via a sieve method. The sieve can be viewed as a ``local" method, where one attempts to rule out the existence of lots of ``global" solutions by ruling out the possibility of lots of ``local" solutions modulo $p$ for ``many" primes $p$. Hooley~\cite{Hooley1,Hooley2,Hooley3} was the first to appreciate how sieves could be applied in this context. We will find it convenient to use a particularly flexible sieve method called the {\it polynomial sieve} due to Browning~\cite{Browning}. We defer the statement of the main sieve proposition to Section~\ref{section:sieve}.



Let us recall equation~(\ref{eq:asym}) where for simplicitly we assume $a=b=1:$
\begin{equation}
f(x_1,x_2)=(x_3-x_4)g(x_3,x_4)+k
\end{equation}
A key property of the above equation which enables the sieve method to work is the presence of a linear factor on the RHS. If we let $h=(x_3-x_4)$ and eliminate $x_4$ (say) in the above equation, we may equivalently examine 
\begin{equation}
f(x_1,x_2)= hg(x_3,x_3-h)+k
\end{equation}
The argument then proceeds by first fixing the value of $h$, and then counting solutions to the simpler equation in $(x_1,x_2,x_3)$ which remains. We can then apply the polynomial sieve to detect solutions to this equation where the variables $(x_1,x_2)$ are constrained to satisfy the congruence $f(x_1,x_2)\equiv k\,\,(\text{mod}\,\,h).$ By applying the sieve to a congruenced set in this manner, we are able to retain the trivial bound at this step of the argument. This is crucial, as it allows one to obtain a power saving for $M_{f,g}(B;k)$ provided one gains only a small power of $B$ from the sieve estimates. 

This means we would like to sieve by primes $p$ of size $O(B^{\delta})$ (say), which in turn requires one to understand our variables in arithmetic progressions with modulus of size $O(B^{1+2\delta}).$ The modulus is slightly larger than the length of summation, but this difficulty can be overcome by a completion of sums argument. This leaves one with certain exponential sums over algebraic varieties to estimate. Hence, to execute the sieve method effectively, one has recourse to the deep work of Weil~\cite{Weil} and Deligne~\cite{Deligne} concerning the Riemann Hypothesis for curves and higher dimensional varieties over finite fields. This argument works for a generic value of $h$, but in practice there might exist some exceptional values of $h$ for which certain auxiliary varieties (depending on $h$) fail to be smooth. However, using elimination theory, it is possible to show that there can't be too many of these exceptional values. Once can then estimate the contribution from these cases via other methods, such as the Bombieri-Pila method (see below). 


\subsection{The case $d\geq 5$ and the determinant method}

For the complementary case, when $d\geq 5,$ we will use the determinant method. The determinant method can be viewed as a ``global" method. The general philosophy is that any ``large" contribution to the count $M_{f,g}(B)$ must come from rational points lying on lower dimensional varieties contained inside the hypersurface. This method has its origins in the pioneering work of Bombieri-Pila~\cite{BombieriPila}. It was then greatly developed at a later date by Heath-Brown~\cite{HeathBrown}, and has enjoyed various refinements since due to a variety of authors. 

Again, recalling equation~(\ref{eq:asym}) with $a=b=1,$ we wish to count points on the 3-dimensional surfaces 
\begin{equation}\label{eq:sec2det}
f(x_1,x_2)=(x_3-n)g(x_3,n)+k \subset \mathbb{A}_{\mathbb{Q}}^{3}
\end{equation}
for each fixed integer $n.$ Thus, as in the sieve method, we begin by considering a simpler object of smaller dimension. However, in the former case it was crucial we had the linear factor on the RHS and our change of variables incorporated this information. This is less important here (however we will still make use of the linear factor later).

We will apply the determinant method to equation~(\ref{eq:sec2det}) in the form of the following result, which is implicit in the proof of~\cite[Theorem 3]{Browning4}. 

\begin{prop}[Browning, Heath-Brown]\label{prop:det}
Let $F\in \mathbb{Z}[x,y,z]$ be a non-singular polynomial of degree $d \geq 4.$ Then
\begin{align*}
M_F(B)&\ll_{d,\epsilon} M_F^{\text{lines}}(B) + B^{1/2+\epsilon}+B^{2/\sqrt{d}+1/(d-1)-1/(\sqrt{d}(d-2))+\epsilon},
\end{align*}
where $M_F^{\text{lines}}(B)$ counts the number of integer points lying on lines contained in the hypersurface $\{F=0\}\subset \mathbb{A}_{\overline{\mathbb{Q}}}^{3}$ inside the box $S(B)$ defined by equation~(\ref{eq:box}).
\end{prop}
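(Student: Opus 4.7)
The plan is to execute the $p$-adic determinant method of Heath-Brown, as refined by Salberger, directly on the non-singular affine surface $\{F = 0\} \subset \mathbb{A}_{\mathbb{Q}}^{3}$ (or equivalently its projective closure, which inherits non-singularity outside of at most $O_d(1)$ boundary points contributing negligibly). The core idea is that, for an auxiliary degree parameter $D \geq 1$ to be optimised at the end, one can cover every integer point of $\{F = 0\} \cap S(B)$ by the intersections of $\{F = 0\}$ with a family of at most $J = J(d, D, B)$ auxiliary polynomials $G_1, \ldots, G_J \in \mathbb{Z}[x,y,z]$ of degree $D$, none of which is divisible by $F$.

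The construction of the $G_j$ proceeds by grouping integer points into $p$-adic neighbourhoods for primes $p$ of carefully-chosen size $\asymp B^{c(d,D)}$, and then invoking a Vandermonde-type determinant vanishing argument to produce, for each neighbourhood, an auxiliary polynomial of degree $D$ vanishing on all integer points lying in it. Non-singularity of $F$ is essential here: it gives precise control of the Hilbert function of $\{F = 0\}$, and hence of the dimension of the space of degree-$D$ polynomials restricted to the surface, which is what drives the determinant to vanish. The upshot, due to Salberger, is a bound $J \ll_{d,D,\epsilon} B^{\alpha(d,D)+\epsilon}$ for an explicit exponent $\alpha(d,D)$ which decays like $1/\sqrt{D}$.

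For each $j$, the intersection $C_j := \{F = 0\} \cap \{G_j = 0\}$ is an affine curve of degree $\leq dD$ by Bezout. Decomposing into geometrically irreducible components, one separates: lines (which by definition contribute to $M_F^{\text{lines}}(B)$); conics (each contributing $\ll B^{1/2+\epsilon}$ integer points by the Bombieri-Pila bound or a direct parametrisation); and irreducible space curves of degree $e \geq 3$ (each contributing $\ll_{\epsilon} B^{1/e + \epsilon}$ integer points by the Heath-Brown / Pila affine curve bound). Summing these contributions over $j$ and the components, and then optimising $D$ to balance the auxiliary polynomial count $J$ against the curve point estimates, yields the stated bound. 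The conic contribution aggregates to $B^{1/2+\epsilon}$ (independent of $D$), while the higher-degree contribution produces, after optimisation, the Salberger exponent $2/\sqrt{d} + 1/(d-1) - 1/(\sqrt{d}(d-2))$. The principal obstacle is the combinatorial optimisation of $D$ together with the delicate Hilbert-function estimates for the auxiliary polynomial count $J$, both of which rely crucially on the non-singularity of $F$.
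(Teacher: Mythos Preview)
The paper does not prove this proposition: it is quoted from the literature, described as ``implicit in the proof of \cite[Theorem~3]{Browning4}''. Your outline is a sketch of the determinant-method argument underlying that reference, and at the level of strategy it is correct: cover the integer points by $J$ auxiliary curves via the $p$-adic determinant method, decompose each curve into irreducible components, separate off the lines, bound the remaining components via curve point-counting, and optimise the auxiliary degree $D$.

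Two points are under-specified. First, the assertion that the conic contribution ``aggregates to $B^{1/2+\epsilon}$ (independent of $D$)'' needs justification: since $J$ grows as a power of $B$, the total number of conic components across all $J$ auxiliary curves is not a priori $O_d(1)$, so one cannot simply absorb $J$ copies of $B^{1/2+\epsilon}$ into a single $B^{1/2+\epsilon}$. In \cite{Browning4} this is handled by a more careful accounting of the low-degree curves that can lie on a non-singular surface of degree $d$, and a separate treatment of their contribution; you should name that input explicitly rather than asserting the aggregation. Second, your parenthetical that the projective closure ``inherits non-singularity outside of at most $O_d(1)$ boundary points contributing negligibly'' is not true in general for an arbitrary affine non-singular $F$: the singular locus at infinity can be positive-dimensional. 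The cited result in fact assumes projective non-singularity, and the paper, when it applies this proposition, first verifies (Lemma~\ref{lemma:singvals}) that the relevant surfaces $\Gamma_n^{\text{proj}}$ are smooth in $\mathbb{P}^3$. Your sketch should make the projective hypothesis explicit rather than waving it away.
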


We note that the last error term exceeds $B$ when $d<5$. This is the reason we cannot apply the determinant method when $d\in\{3,4\}.$

Therefore, to obtain a power saving for $M_{f,g}(B;k)$ via the determinant method, it is sufficient to have control over the possible lines which can appear in the surfaces~(\ref{eq:sec2det}). Here it is important we are averaging over $n;$ for certain values of $n$ lines may exist and hence contribute a larger amount to $M_{f,g}(B;k),$ however, one can show via elementary means that there cannot be too many values of $n$ for which this can occur.  We note that our argument will also make use of the linear factor on the RHS of~(\ref{eq:sec2det}).




We remark that Proposition~\ref{prop:det} only applies when the surface~(\ref{eq:sec2det}) is smooth. This will be true for a generic choice of $n$. Thus we arrive at a similar situation to that described above with the sieve argument, where we must handle exceptional cases via different methods. For these values we will use the Bombieri-Pila method. We state the main result we will use here. The following appears as~\cite[Theorem~5]{BombieriPila}.

\begin{prop}[Bombieri-Pila]\label{prop:bombieripila}
Let $F\in \mathbb{Z}[x,y]$ be an absolutely irreducible curve of degree $d\geq 2$. Then 
$$M_F(B) \ll_{d,\epsilon} B^{1/d+\epsilon}.$$
\end{prop}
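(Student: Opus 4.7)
My plan is to reproduce the real-analytic determinant method of Bombieri and Pila. The first step is to reduce to counting integer points on short monotone arcs of the real locus $\{F(x,y)=0\} \subset \mathbb{R}^2$ that sit inside $[-B,B]^2$. Since $F$ has bounded degree, the critical set of the projection $(x,y)\mapsto x$ restricted to the real curve is a finite set $Z$ of size $\ll_d 1$; away from $Z$ we may locally write $y=\phi(x)$ for a real-analytic function $\phi$ with derivative bounds that are uniform in $B$. I would then subdivide each such branch into sub-arcs of length $\ell$, where $\ell$ is a parameter in $B$ and $d$ to be fixed later, and focus on bounding the number of integer points on a single sub-arc $I$.

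Given $N$ integer points $(x_1,\phi(x_1)),\ldots,(x_N,\phi(x_N))$ on $I$ and an integer auxiliary degree $D$, I would form the matrix whose rows are the vectors $(x_i^a \phi(x_i)^b)_{a+b\le D}$ and consider any $\binom{D+2}{2}\times\binom{D+2}{2}$ minor $A$. Because $\phi(x_i)$ is \emph{not} guaranteed to be rational, one cannot immediately declare $\det A$ an integer; one instead shows (by expanding $A$ row-by-row around the centre of $I$ using the Taylor series of $\phi$, and performing column operations) that $\det A$ coincides, up to bounded factors, with a determinant whose entries are integers, so $|\det A|\ge 1$ unless it vanishes. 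The central estimate is the upper bound $|\det A| \ll_{d,D}\ B^{c_1(D)}\,\ell^{c_2 D^2}$, obtained by observing that Taylor expansion forces high-order cancellation once enough rows share a common $x$-coordinate range. Choosing $\ell$ to be an appropriate small power of $B$ (depending on $D$, $d$ and $\epsilon$) and $D$ a sufficiently large constant in terms of $d$ and $\epsilon$, one forces the upper bound below $1$, so either $N\le \binom{D+2}{2}$ already or the minor vanishes. In the latter case the row vectors span a proper subspace, producing a non-zero polynomial $G(x,y) \in \mathbb{R}[x,y]$ of degree at most $D$ that vanishes at every integer point on $I$.

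To turn this into an upper bound for $N$, I would invoke absolute irreducibility of $F$. By clearing denominators and removing common factors, one may assume $G$ is defined over $\mathbb{Q}$ and coprime to $F$, since $F$ cannot divide $G$ once $D$ is kept below a multiple of $d$. Bezout's theorem then gives $|\{F=0\}\cap\{G=0\}|\le d\cdot D$, hence $N \ll_{d,\epsilon} 1$ on each sub-arc. Summing over the $\ll_d B/\ell$ sub-arcs that cover the real curve yields $M_F(B)\ll_{d,\epsilon} B/\ell$, and the optimisation of $\ell$ in terms of $d$ and $\epsilon$ delivers the exponent $1/d+\epsilon$.

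The principal obstacle is the joint calibration of $D$ and $\ell$: the Taylor-cancellation upper bound on $\det A$ must beat the integer lower bound of $1$, which pushes $\ell$ down as $D$ grows, while the Bezout step requires $D$ to remain bounded in terms of $d$. Tracking the $\ell^{c_2 D^2}$ gain against the $B^{c_1(D)}$ loss, and verifying that the ``integrality'' of the modified determinant survives the change of basis from $\phi(x)$-monomials to $y$-monomials, is the arithmetic heart of the method and is precisely what pins down the exponent $1/d$.
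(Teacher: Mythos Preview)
The paper does not prove this proposition at all: it is quoted verbatim as \cite[Theorem~5]{BombieriPila} and used as a black box. So there is no ``paper's proof'' to compare against; your sketch is an attempt to reproduce Bombieri and Pila's original argument rather than anything the present paper does.

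Your outline follows the correct method (the real-analytic determinant method), but it contains two genuine confusions that would derail an actual write-up. First, the integrality of $\det A$ is immediate and requires no Taylor manoeuvre: the points being counted are \emph{integer} points $(x_i,y_i)\in\mathbb{Z}^2$ on the curve, so $\phi(x_i)=y_i\in\mathbb{Z}$ and every entry $x_i^a y_i^b$ is an integer. The Taylor expansion is used solely for the \emph{upper} bound on $|\det A|$, not to manufacture integrality. Second, your plan to ``keep $D$ below a multiple of $d$'' so that $F\nmid G$ is backwards and would destroy the exponent: the optimisation requires $D$ to be taken \emph{large} in terms of $d$ and $\epsilon$ (the saving in the determinant is of order $\ell^{cD^2}$ against a loss $B^{c'D^2/d}$, and one needs $D\to\infty$ to push the exponent to $1/d+\epsilon$). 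The correct way to guarantee $G$ is not a multiple of $F$ is to work from the outset with a set of monomials whose restrictions to $\{F=0\}$ are linearly independent, i.e.\ a basis of $\mathbb{Q}[x,y]_{\le D}/(F)$, which has dimension $dD+O_d(1)$; the resulting auxiliary $G$ then cannot vanish identically on the curve, and absolute irreducibility plus B\'ezout gives at most $dD$ common zeros.
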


\subsection{Some basic facts about discriminant polynomials}\label{section:resultants}

Throughout the proof of Theorem~\ref{theo:generalmain} we will encounter various auxiliary curves and surfaces which depend on an integer parameter $h$ (say). For both the determinant method and the polynomial sieve method to work, we require these varieties to be smooth for``most" choices of $h.$ This in turn amounts to showing that certain discriminant polynomials, which by definition will be polynomials in the parameter $h$, are not the zero polynomial. 

Our method of proving this is to extract the leading coefficient using limiting arguments. This leading coefficient will generically be non-zero, and only vanish if our polynomials are degenerate in some way. Our additional assumptions (2) and (4) in the statement of Theorem~\ref{theo:generalmain} ensure that we avoid these cases. It is possible that these additional assumptions could be removed if one had a different way of proving these discriminant polynomials didn't vanish. 

With this in mind, we collect here a few basic facts about discriminant polynomials which we will use without comment throughout the paper. 
Given a polynomial $f\in \mathbb{Q}[x]$ of degree $d,$ leading coefficient $a_d,$ and roots $\lambda_1,\ldots,\lambda_d \in \overline{\mathbb{Q}},$ we form its discriminant polynomial with respect to $x$, which we write as $\mathrm{Disc}_x[f(x)]$ by the formula
\begin{equation}
\mathrm{Disc}_x[f(x)] = (-1)^{d(d-1)/2}a_d^{d(d-1)} \prod_{\substack{i\neq j}}(\lambda_i-\lambda_j).
\end{equation}
The discriminant polynomial satisfies the following properties:
\begin{enumerate}[label=(\roman*)]
	\item $\mathrm{Disc}_x[f(x)]$ is a polynomial in the coefficients of $f$.
	\item $\mathrm{Disc}_x[f(x)]$ vanishes if and only if $f$ and $f'$ possess a common factor over $\mathbb{Q}$. In particular, if $f$ has no non-constant repeated factors over $\mathbb{Q}$ then $\mathrm{Disc}_x[f(x)]\neq 0.$
	\item For any real numbers $a,b$ and $c$ we have the transformation formula 
	\begin{equation}
	\mathrm{Disc}_x[af(bx+c)] = a^{2d-2}b^{d(d-1)}\mathrm{Disc}_x[f(x)].
	\end{equation}
\end{enumerate}

\section{Notation}\label{section:notation}

We will use both Landau and Vinogradov asymptotic notation throughout the paper. $B$ will denote a large integer, and all asymptotic notation is to be understood as referring to the limit as $B\rightarrow \infty.$ We allow any implied constants to depend implicitly on the variables $f,g,a$ and $b,$ without specifying so. By this, we mean dependencies on the coefficients of $f$ and $g$ and also on the degree $d$. Any dependencies of the implied constants on other parameters $A$ will be denoted by a subscript, for example $X\ll_{A} Y$ or $X=O_{A}(Y),$ unless stated otherwise. We let $\epsilon$ denote a small positive constant, and we adopt the convention it is allowed to change at each occurrence, and even within a line.

If $k$ is a field, we let $\mathbb{A}_{k}^{n}$ (resp. $\mathbb{P}_{k}^{n}$) denote $n$-dimensional affine (resp. projective) space over $k$. If $F\in \mathbb{Z}[x_1,\ldots,x_n]$ we let $\{F=0\}\subset \mathbb{A}_{\mathbb{Q}}^{n}$ denote the $n$-dimensional affine hypersurface generated by $F$ over $\mathbb{Q}.$ By slight abuse of notation, we may write this as $F=0\subset \mathbb{A}_{\mathbb{Q}}^{n},$ or even simply $F\subset \mathbb{A}_{\mathbb{Q}}^{n}.$ We adopt similar conventions whenever $F$ is homogenous and generates a projective hypersurface. The hypersurface defined by $F$ is said to be {\it smooth} over $\mathbb{Q}$ if the system of equations 
$$F(y_1,\ldots,y_n) = \frac{\partial{F}}{\partial{x_1}}(y_1,\ldots,y_n) =\ldots =\frac{\partial{F}}{\partial{x_d}}(y_1,\ldots,y_n)= 0$$
has no solutions with $(y_1,\ldots,y_n)\in\mathbb{A}_{\mathbb{Q}}^{n}$ in the affine case or $[y_1:\ldots:y_n]\in\mathbb{P}_{\mathbb{Q}}^{n}$ in the projective case.  

We let $f_i$ (resp. $g_i$) denote the homogenous part of $f$ (resp. $g$) of degree $i$. Thus, we may write 
$$f(x,y) = \sum_{i=0}^{d}f_i(x,y),\,\,\,\,g(x,y) = \sum_{i=0}^{d-1}g_i(x,y).$$
We will frequently use assumption (1) in Theorem~\ref{theo:generalmain} which says that the curve  $\{f(x,y)=k\}\subset \mathbb{A}_{\mathbb{Q}}^{2}$ contains no lines. We note that the curve may well contain lines over the larger field $\overline{\mathbb{Q}},$ but these lines cannot simply be reparametrisations of lines over $\mathbb{Q}$ (e.g. $\sqrt{2}x+\sqrt{2}y=\sqrt{2}$). To this end, the following definition is useful: we say a line $\{\alpha x+\beta y + \gamma = 0\} \subset \mathbb{A}_{\overline{\mathbb{Q}}}^{2}$ is {\it definable over} $\mathbb{Q}$ if there exists $\lambda \in \overline{\mathbb{Q}}$ and $a,b,c\in \mathbb{Q}$ such that $\alpha x+ \beta y+\gamma = \lambda (ax+by+c).$  With this definition, our assumption is precisely that the curve $f(x,y)=k$ doesn't contain any lines definable over $\mathbb{Q}.$



\section{Proof of Theorem~\ref{theo:generalmain} in the case $d\geq 5.$} 

Fix $f,g\in\mathbb{Z}[x,y]$ as in the statement of Theorem~\ref{theo:generalmain}, with $d\geq 5$. In this section we prove the Theorem~\ref{theo:generalmain} in this regime. Recalling equation~(\ref{eq:asym}), we wish to count integer points on the affine hypersurface
\begin{equation}
f(x_1,x_2)=(ax_3-bx_4)g(x_3,x_4)+k\subset \mathbb{A}_{\mathbb{Q}}^{4}.
\end{equation}
The method proceeds by fixing the value of $x_4$, which we now call $n$, and considering the resulting 3-dimensional affine surface 
\begin{equation}
f(x_1,x_2) = (ax_3-bn)g(x_3,n)+k\subset \mathbb{A}_{\mathbb{Q}}^{3},
\end{equation}
which we call $\Gamma_n.$ For later purposes, we let $\Gamma_n^{\text{proj}}(x_1,x_2,x_3,w)\subset \mathbb{P}_{\mathbb{Q}}^{3}$ denote the projectivisation of this surface. Recalling our notation so far, we can write 
\begin{equation}\label{eq:gammaneq}
M_{f,g}(B;k) = \sum_{1\leq n\leq B} M_{\Gamma_n}(B).
\end{equation}
First let us deal with a degenerate situation, where $n$ is such that $g(x_3,n)$ vanishes identically (as a polynomial in $x_3$). 
\begin{lemma}\label{lemma:splitcomponents}
Suppose $n$ is such that $g(x_3,n)$ vanishes identically. Then 
$$M_{\Gamma_n}(B) \ll_{\epsilon} B^{3/2+\epsilon}.$$
\end{lemma}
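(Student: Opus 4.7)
My approach would be to exploit the degeneracy: when $g(x_3,n)$ vanishes identically in $x_3$, the equation defining $\Gamma_n$ collapses to $f(x_1,x_2)=k$, with $x_3\in\{1,\dots,B\}$ completely unconstrained. Thus
$$M_{\Gamma_n}(B) = B\cdot N_f(B), \qquad N_f(B):=\#\{(x_1,x_2)\in[1,B]^2: f(x_1,x_2)=k\},$$
and the task reduces to proving $N_f(B) \ll_\epsilon B^{1/2+\epsilon}$.

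To bound $N_f(B)$, I would factor $F(x,y):=f(x,y)-k$ into $\mathbb{Q}$-irreducible pieces, $F = \prod_i F_i^{e_i}$, and treat each $\{F_i=0\}$ separately (multiplicities being irrelevant for the underlying zero set). Assumption (1) of Theorem~\ref{theo:generalmain} immediately rules out any linear $F_i$, so every component has degree at least $2$. If $F_i$ is absolutely irreducible, then after clearing denominators Proposition~\ref{prop:bombieripila} yields at most $\ll_\epsilon B^{1/\deg F_i+\epsilon} \leq B^{1/2+\epsilon}$ integer points on $\{F_i=0\}$. If instead $F_i$ is $\mathbb{Q}$-irreducible but splits over $\overline{\mathbb{Q}}$ into at least two Galois-conjugate absolutely irreducible factors $G_{i,1},\dots,G_{i,m_i}$, then any integer point on $\{F_i=0\}$ lies on some $\{G_{i,j}=0\}$; applying the Galois group (which fixes the integer coordinates) forces it to lie on every $\{G_{i,j}=0\}$ simultaneously, and Bezout's theorem bounds this common zero locus by $O(1)$ points.

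Summing these contributions over the finitely many irreducible factors gives $N_f(B) \ll_\epsilon B^{1/2+\epsilon}$, and hence $M_{\Gamma_n}(B) \ll_\epsilon B^{3/2+\epsilon}$ as required. I do not foresee any serious obstacle: the crucial input is that assumption (1) excludes $\mathbb{Q}$-lines, which would otherwise contribute $\Theta(B)$ integer points and spoil the bound, while the Galois-theoretic treatment of the non-absolutely-irreducible case is standard.
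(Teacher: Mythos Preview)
Your proof is correct and follows essentially the same approach as the paper: reduce to counting integer points on the curve $f(x_1,x_2)=k$, then apply Bombieri--Pila (Proposition~\ref{prop:bombieripila}) to components of degree~$\geq 2$ and argue that any remaining components contribute $O(1)$. The only cosmetic difference is that the paper decomposes into absolutely irreducible components over $\overline{\mathbb{Q}}$ and observes that any linear component, being non-$\mathbb{Q}$-definable by assumption~(1), carries at most one integer point, whereas you decompose into $\mathbb{Q}$-irreducible factors and dispatch the non-absolutely-irreducible ones via the Galois/B\'ezout argument; both routes are standard and equivalent here.
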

\begin{proof}
Clearly 
\begin{equation}
M_{\Gamma_n}(B) \ll B\cdot \#\{x_1,x_2\in[1,B]\cap\mathbb{Z}: f(x_1,x_2) = k\}.
\end{equation}
To evaluate this count, we can decompose our curve into $O(1)$ absolutely irreducible components and majorise by summing over each component. We can then apply Proposition~\ref{prop:bombieripila} to each component. Components of degree $\geq 2$ contribute $O_{\epsilon}(B^{1/2+\epsilon}).$ By assumption, any components of degree 1 (i.e. lines) are not definable over $\mathbb{Q}$ and therefore contain at most 1 integer point, and so these contribute in total $O(1)$. Thus, this count is $O_{\epsilon}(B^{1/2+\epsilon}),$ which yields the lemma. 
\end{proof}

There can be at most $O(1)$ such values of $n$ for which $g(x_3,n)$ vanishes identically, and so~(\ref{eq:gammaneq}) becomes
\begin{equation}\label{eq:gammaneq2}
M_{f,g}(B;k) = \sum_{\substack{1\leq n\leq B \\ g(\cdot,n)\neq 0}}M_{\Gamma_{n}}(B)+O_{\epsilon}(B^{3/2+\epsilon}).
\end{equation}
For these remaining values of $n$ we would like to use Proposition~\ref{prop:det} to estimate the corresponding $M_{\Gamma_n}(B).$ To do this we require the surface $\Gamma_n$ to be smooth. Generically this is will be the case, as the following lemma demonstrates. 

\begin{lemma}\label{lemma:singvals}
$\Gamma_{n}^{\text{proj}}$ is singular for at most $O(1)$ values of $n$. 
\end{lemma}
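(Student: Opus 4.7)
My plan is to follow the strategy outlined in Section~\ref{section:resultants}: view the defining polynomial $\tilde{F}_n(x_1,x_2,x_3,w)$ of $\Gamma_n^{\text{proj}}$ as a polynomial in $(x_1,x_2,x_3,w)$ of degree $d$ whose coefficients are polynomials in $n$. Then $D(n):=\mathrm{Disc}(\tilde{F}_n)$ is itself a polynomial in $n$, and the locus of $n$ for which $\Gamma_n^{\text{proj}}$ is singular is precisely the zero set of $D$. It therefore suffices to prove $D\not\equiv 0$, and I would achieve this by extracting the leading coefficient of $D$ in $n$ via a rescaling argument at infinity.

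Concretely, I would apply the projective coordinate change $w\mapsto w/n$ and clear denominators by multiplying by $n^d$, producing a new defining polynomial
$$Q_n(x_1,x_2,x_3,w):=n^d\tilde{F}_n(x_1,x_2,x_3,w/n).$$
For $n\neq 0$ this defines a projective hypersurface isomorphic to $\Gamma_n^{\text{proj}}$. A direct expansion using the decompositions $f=\sum_i f_i$ and $g=\sum_j g_j$ and the homogeneity of each $g_j$ shows that
$$Q_n=n^d F_\infty+(\text{terms of strictly lower degree in }n),$$
where
$$F_\infty(x_1,x_2,x_3,w):=f_d(x_1,x_2)-(ax_3-bw)g_{d-1}(x_3,w).$$
Hence the leading coefficient of $\mathrm{Disc}(Q_n)$ in $n$ is a non-zero multiple of $\mathrm{Disc}(F_\infty)$, and it is enough to prove that $F_\infty$ cuts out a smooth hypersurface in $\mathbb{P}^3$.

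The crux is therefore to verify smoothness of $F_\infty$. Suppose $[x_1:x_2:x_3:w]$ is a singular point. Then $\partial_{x_1}f_d(x_1,x_2)=\partial_{x_2}f_d(x_1,x_2)=0$, which by assumption (2) (i.e.\ $f_d$ has no repeated components in $\mathbb{P}^1$) forces $x_1=x_2=0$, and $F_\infty=0$ reduces to $(ax_3-bw)g_{d-1}(x_3,w)=0$. I would split into three cases: (i) if $ax_3=bw$ and $g_{d-1}(x_3,w)\neq 0$, then $\partial_{x_3}F_\infty=-ag_{d-1}(x_3,w)\neq 0$, contradicting singularity; (ii) if $g_{d-1}(x_3,w)=0$ and $ax_3\neq bw$, then $\partial_{x_3}F_\infty=\partial_w F_\infty=0$ forces both partials of $g_{d-1}$ to vanish at $[x_3:w]$, making it a repeated root of $g_{d-1}$, contradicting assumption (3); (iii) if both $ax_3=bw$ and $g_{d-1}(x_3,w)=0$, then $(ax-by)\mid g_{d-1}(x,y)$, whence $(ax-by)^2\mid(ax-by)g_{d-1}(x,y)$, again contradicting (3).

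Once $F_\infty$ is known to be smooth, $\mathrm{Disc}(F_\infty)$ is non-zero. Since the discriminant is a polynomial in the coefficients of the defining polynomial, and the coefficients of $n^{-d}Q_n$ converge to those of $F_\infty$ as $n\to\infty$, continuity of $\mathrm{Disc}$ gives $\mathrm{Disc}(n^{-d}Q_n)\neq 0$ for all sufficiently large $n$, and hence $\mathrm{Disc}(\tilde{F}_n)\neq 0$ for all sufficiently large $n$. Thus $D(n)\not\equiv 0$ as a polynomial in $n$ and vanishes on at most $O(1)$ values of $n$, completing the proof. I expect the main technical step to be the case analysis establishing smoothness of $F_\infty$; the rescaling and continuity-of-discriminant arguments are standard bookkeeping.
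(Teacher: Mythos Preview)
Your argument is correct and uses the same core idea as the paper---show that a suitable discriminant polynomial in $n$ is not identically zero by extracting its leading coefficient via a rescaling at infinity, and then verify the limiting object is nondegenerate using assumptions (2) and (3).

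The packaging, however, is different. The paper does \emph{not} invoke the multivariate discriminant of the surface in $\mathbb{P}^3$. Instead it splits off the locus $w=0$ by an explicit coefficient computation (finding at most one bad $n$ there), and for $w=1$ observes that any singular point $[r:s:t:1]$ has $(r,s)$ constrained to $O(1)$ common zeros of $\partial_{x_1} f,\partial_{x_2}f$ (via B\'ezout, using that $f_d$ is square-free). For each such $(r,s)$ it reduces to a \emph{univariate} discriminant $\mathrm{Disc}_t[G_n(t)-f(r,s)]$, and shows this is a nonzero polynomial in $n$ by the scaling $t\mapsto nt$, with limit $\mathrm{Disc}_t[(at-b)g_{d-1}(t,1)]$. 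Your approach avoids both the $w=0$ case split and the B\'ezout step by working with a single projective discriminant; your limiting hypersurface $F_\infty(0,0,x_3,w)=(ax_3-bw)g_{d-1}(x_3,w)$ is precisely the homogenization of the paper's limiting univariate polynomial, so the two smoothness verifications amount to the same use of assumption (3). Your route is slicker; the paper's stays entirely within the univariate discriminant framework set up in Section~\ref{section:resultants}.
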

\begin{proof}
Write $g(x_3,x_4) = \sum_{i+j\leq d-1}e_{i,j} x_3^ix_4^j.$ We first deal with possible singular points with $w=0.$ We can write $\Gamma_n^{\text{proj}}$ as
\begin{align*} 
f_d(x_1,x_2)+wf_{d-1}(x_1,x_2) &=  ae_{d-1,0}x_3^d+(ae_{d-2,1}n-bne_{d-1,0}+ae_{d-2,0})x_3^{d-1}w\\
&+\text{ (terms involving $w^2$)}.
\end{align*}
If $[r:s:t:0]$ is a singular point, by considering the $x_1$ and $x_2$ derivatives, we see that necessarily 
$$\frac{\partial f_{d}}{\partial x_1}(r,s) = \frac{\partial f_{d}}{\partial x_2}(r,s) = 0.$$
By Euler's identity, we see that $f_d(r,s)=0$ also. As we are assuming $f_d$ is smooth we must have $r=s=0.$ If $e_{d-1,0}\neq 0$ then the equation above yields $t=0,$ which is a contradiction. If $e_{d-1,0}=0$ then necessarily $e_{d-2,1}\neq 0$ as otherwise $g_{d-1}(x_3,x_4)$ would contain a square factor of $x_4^2$. In this case the $w$ derivative evaluated at $[0:0:t:0]$ yields 
$$a(e_{d-2,1}n+e_{d-2,0})t^{d-1} = 0.$$
Unless $n=-e_{d-2,0}/e_{d-2,1}$ we conclude again that $t=0.$ Thus there is at most 1 value of $n$ for which $\Gamma_n^{\text{proj}}$ contains a singular point with $w=0.$

This leaves us to examine possible singular points with $w=1.$ For ease, let us write $G_{n}(x_3) = (ax_3-bn)g(x_3,n)$ and denote by $G_n'(x_3)$ the derivative with respect to $x_3.$ Again, by examining derivatives, it is clear that any singular point $[r:s:t:1]$ must satisfy, in particular, 
$$\frac{\partial f}{\partial x_1}(r,s) = \frac{\partial f}{\partial x_2}(r,s) = 0 \text{ and } G_n'(t) = 0.$$
Now, our assumption that $f_d$ is square-free implies that the partial derivatives $\partial f/\partial x_1$ and $\partial f/\partial x_2$ are coprime and hence, by B\'ezout's theorem, they have at most $O(1)$ common zeros.\footnote{If $\partial f/\partial x_1$ and $\partial f/\partial x_2$ have a common factor $p$ then $f$ must be of the form $f= p^2q+c$ for some constant $c$. By comparing homogenous parts of top-degree we see $f_d$ will be divisible by a square in this case.} Thus, this system constrains $r$ and $s$ to at most $O(1)$ possible values. For each pair we must then solve the system
\begin{align*}
G_{n}(t) &= f(r,s), \\
G_n'(t) &= 0.
\end{align*}
We would like to show that this system is only solvable in $t$ for at most $O(1)$ choices of $n.$ If this were the case, it would follow that there are at most $O(1)$ values of $n$ for which $\Gamma_n^{\text{proj}}$ contains a singular point with $w=1.$  This, together with the above, would yield the conclusion of the lemma. 

We will prove this via the following strategy, which will be used numerous times throughout the paper: if the system is solvable then the discriminant
\begin{equation*}
\mathrm{Disc}_{t} [G_{n}(t) -f(r,s)]
\end{equation*}
will vanish identically. However, by definition, this will be a polynomial in $n$ which generically will be non-zero and so only vanish for $O(1)$ values of $n$. Hence it is sufficient to prove that this discriminant is not identically zero. We prove this by extracting the leading coefficient. Our assumptions on $f$ and $g$ will then imply this leading coefficient doesn't vanish. 

Now we have
\begin{align} \nonumber
\frac{G_{n}(n t)}{n^d} &= \frac{(at-b)g(nt,n)}{n^{d-1}} \\ \nonumber
&= \frac{(at-b)}{n^{d-1}}  \sum_{i=0}^{d-1}g_i(nt,n) \\ \nonumber
&= (at-b)g_{d-1}(t,1)+O(n^{-1}).
\end{align}
Thus, taking limits, we see that
\begin{align}\label{eq:limit} 
\lim_{n\rightarrow \infty} \frac{G_{n}(n t)}{n^d} &=(at-b)g_{d-1}(t,1).
\end{align}
By standard properties of discriminant polynomials, as detailed in Section~\ref{section:resultants}, whenever $n\neq 0$ we can write 
\begin{align*}
\mathrm{Disc}_{t} \bigg[\frac{G_{n}(nt) -f(r,s)}{n^d} \bigg] &= \frac{\mathrm{Disc}_{t}[G_n(t) - f(r,s)]}{n^{(D-1)(2d-D)}},
\end{align*}
where $1\leq D\leq d$ is the degree of $G_n(t)$ as a polynomial in $t.$ This is valid for any $n\neq 0,$ and moreover both sides of equation (\ref{eq:limit}) have the same degree $D$ in $t$. Thus we may take the limit as $n\rightarrow \infty$ inside the discriminant, and it follows that 
$$\lim_{n\rightarrow \infty} \frac{\mathrm{Disc}_{t}[G_n(t) - f(r,s)]}{n^{(D-1)(2d-D)}} = \mathrm{Disc}_{t} [(at-b)g_{d-1}(t,1)]$$
The RHS is non-zero by our assumption that $\{(ax-by)g_{d-1}(x,y)\}\subset \mathbb{P}_{\mathbb{Q}}^{2}$ is square-free. Thus, the discriminant polynomial has a non-zero leading coefficient. This completes the proof of the lemma.
\end{proof}

Before we can dispense with those values of $n$ for which $\Gamma_n$ is singular, we require some information about the possible lines which can be contained in level sets of the curve  $\{f(x,y)=0\}\subset \mathbb{A}_{\overline{\mathbb{Q}}}^{2}.$ 

\begin{lemma}[Lines contained in level sets of $f$]\label{lemma:lines}
Let $f\in \mathbb{Z}[x_1,x_2]$ be such that $f_d$ is not divisible by a square and let $l\in \overline{\mathbb{Q}}.$ Then, if the variety 
$$\{f(x_1,x_2) = l\} \subset \mathbb{A}_{\overline{\mathbb{Q}}}^{2}$$
contains a line, this line must be equal to one of the possible lines listed below:
\begin{enumerate}
	\item The line parametrised by $(x_1,x_2) = (t,\alpha t+\beta)$ where $f_d(1,\alpha) = 0$ and $\beta = -f_{d-1}(1,\alpha)/(\partial f_d/\partial y)(1,\alpha).$ 
	\item The line parametrised by $(x_1,x_2) = (\gamma,t)$ where $\gamma=-f_{d-1}(0,1)/(\partial f_d/\partial x)(0,1).$ (This case requires $f_d(0,1)=0.$)
\end{enumerate}
\end{lemma}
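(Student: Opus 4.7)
The plan is to parametrise any line in $\mathbb{A}^{2}_{\overline{\mathbb{Q}}}$ in one of two standard ways (non-vertical or vertical) and in each case reduce the containment in $\{f = l\}$ to a polynomial identity in the line's parameter, from which the constraints on the slope and intercept are read off by comparing the top two coefficients. This is structurally identical to the argument already used in the proof of Lemma~\ref{lemma:nolines}, and indeed the present lemma may be viewed as a refinement of it.

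First I would observe that any line in $\mathbb{A}^{2}_{\overline{\mathbb{Q}}}$ admits either a parametrisation $(x_1,x_2) = (t,\alpha t + \beta)$ (the non-vertical case) or $(x_1,x_2) = (\gamma,t)$ (the vertical case), and treat the two cases separately. In the non-vertical case, containment in $\{f=l\}$ is equivalent to the polynomial identity $f(t,\alpha t + \beta) = l$ in $\overline{\mathbb{Q}}[t]$. Writing $f = \sum_{i=0}^{d} f_i$ and expanding, the coefficient of $t^{d}$ comes only from $f_d(t,\alpha t + \beta)$ and equals $f_d(1,\alpha)$, which therefore must vanish. Isolating the coefficient of $t^{d-1}$ (to which only $f_d$ and $f_{d-1}$ contribute) and using a short Taylor-type expansion yields
$$\beta\cdot\frac{\partial f_d}{\partial y}(1,\alpha) + f_{d-1}(1,\alpha) = 0.$$
Provided $(\partial f_d/\partial y)(1,\alpha)\neq 0$, this uniquely determines $\beta$ in exactly the form claimed.

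The main obstacle is therefore verifying the non-vanishing of $(\partial f_d/\partial y)(1,\alpha)$, and this is precisely where the square-free hypothesis on $f_d$ is used. Factoring $f_d$ over $\overline{\mathbb{Q}}$ into pairwise non-proportional linear forms $f_d(x,y) = c\prod_{j}(\mu_j x - \nu_j y)$, the condition $f_d(1,\alpha)=0$ singles out one factor, say the $k$-th, with $\mu_k - \nu_k\alpha = 0$; the $y$-derivative at $(1,\alpha)$ then evaluates to a non-zero scalar multiple of $\prod_{j\neq k}(\mu_j - \nu_j\alpha)$, which is non-zero because no other factor is proportional to the $k$-th. Thus the square-free assumption rules out exactly the degeneration that would otherwise obstruct solving for $\beta$.

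The vertical case is handled identically. The identity $f(\gamma,t)=l$ in $\overline{\mathbb{Q}}[t]$ forces the coefficient of $t^{d}$, namely the coefficient of $y^d$ in $f_d$, which equals $f_d(0,1)$, to vanish; this is the necessary condition flagged in the statement. Comparing coefficients of $t^{d-1}$ then yields
$$\gamma\cdot\frac{\partial f_d}{\partial x}(0,1) + f_{d-1}(0,1) = 0,$$
and the same square-free argument, applied now to the linear factor of $f_d$ vanishing at the point $[0:1]\in\mathbb{P}^{1}$, shows $(\partial f_d/\partial x)(0,1)\neq 0$, pinning down $\gamma$ as asserted and completing the classification.
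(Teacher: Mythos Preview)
Your proof is correct and follows essentially the same approach as the paper: parametrise the line in non-vertical or vertical form, compare the coefficients of $t^{d}$ and $t^{d-1}$ in the resulting polynomial identity, and use the square-free hypothesis on $f_d$ to ensure the relevant partial derivative does not vanish so that $\beta$ (respectively $\gamma$) is uniquely determined. The only cosmetic difference is that the paper phrases the non-vanishing of $(\partial f_d/\partial y)(1,\alpha)$ via the discriminant $\mathrm{Disc}_y(f_d(1,y))$, whereas you argue directly from the linear factorisation of $f_d$; these are equivalent.
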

\begin{proof}
Let us suppose that the level set $f(x_1,x_2)=l$ contains a line. This line can be parametrised by $(x_1,x_2) = (\lambda_1 t+\mu_1,\lambda_2 t+ \mu_2),$ where all coefficients lie in $\overline{\mathbb{Q}}$ and $\lambda_1,\lambda_2$ are not both zero. We have the Taylor expansion
\begin{align}\label{eq:taylorexpansion}
f(\lambda_1 t+\mu_1,\lambda_2 t+\mu_2)  &=\sum_{i=0}^{d} \sum_{j=0}^{i} \bigg[\sum_{m+k=j} \frac{1}{k!} \frac{1}{m!}\frac{\partial^{j}f_i}{\partial x^ky^m}(\lambda_1 ,\lambda_2 )\mu_1^k\mu_2^m\bigg] t^{i-j}.
\end{align}
We consider two cases, depending on whether or not $\lambda_1$ is zero.

\begin{enumerate}
	\item If $\lambda_1 \neq 0$ then our line may be parametrised by $(x_1,x_2) = (t,\alpha t+\beta)$ for some $\alpha$ and $\beta.$ Now since we are assuming $f(t,\alpha t+\beta)=l,$ we arrive at the following polynomial identity in $\overline{\mathbb{Q}}[t]:$ 
\begin{align*}
f_d(1,\alpha) t^{d}+\bigg[f_{d-1}(1,\alpha)+\beta\frac{\partial f_d}{\partial y}(1,\alpha)\bigg]t^{d-1}+\ldots=l.
\end{align*}
Since $f_d$ is smooth, $f_d(1,\alpha)$ is a non-zero polynomial in $\alpha$ of degree at most $d.$ Therefore, there are at most $O_d(1)$ choices of $\alpha$ for which the leading coefficient vanishes. For every such $\alpha,$ we must have $(\partial f_d/\partial y)(1,\alpha)\neq 0$ as otherwise the discriminant $\mathrm{Disc}_y(f_d(1,y))$ would vanish, contradicting the fact $f_d$ is square-free. The result follows by looking at the vanishing of the coefficient of $t^{d-1}$.
	\item If $\lambda_1 = 0$ then necessarily $\lambda_2 \neq 0.$ In this case our line may be parametrised by $(x_1,x_2) = (\gamma,t).$ Now 
\begin{align*}
f(\gamma,t)  = f_d(0,1)t^d+\bigg[f_{d-1}(0,1)+\gamma\frac{\partial f_d}{\partial x}(0,1)\bigg]t^{d-1}+\ldots.
\end{align*}
For the coefficient of $t^d$ to vanish we must have $f_d(0,1)=0.$ It follows that $(\partial f_d/\partial x)(0,1)\neq 0$ as otherwise $f_d(x_1,x_2)$ would be divisible by the square $x_1^2.$ The coefficient of $t^{d-1}$ vanishing then implies $\gamma = -f_{d-1}(0,1)/(\partial f_d/\partial x)(0,1),$ as required.
\end{enumerate}
\end{proof}

From now on we let $\Lambda \subset \overline{\mathbb{Q}}$ consist of the set of $\alpha,\beta,\gamma$ defined in Lemma~\ref{lemma:lines} above, whenever they exist. These numbers depend only on $f_d$ and $f_{d-1}$, and it is clear that $|\Lambda|=O(1).$ In case (1) we must have $l=f(0,\beta)$ and in case (2) we must have $l=f(\gamma,0).$

We are now finally in a position to deal with the contribution from those $n$ for which $\Gamma_n^{\text{proj}}$ is singular. 

\begin{lemma}\label{lemma:dealwithsingvals}
We have 
$$\sum_{\substack{1\leq n\leq B \\ \Gamma_n^{\text{proj}}\text{ singular} \\ g(\cdot,n)\neq 0}}M_{\Gamma_n}(B) \ll_{\epsilon} B^{3/2+\epsilon}.$$
\end{lemma}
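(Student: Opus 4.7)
The plan is to exploit the fact, established in Lemma~\ref{lemma:singvals}, that only $O(1)$ values of $n$ contribute to the sum. For each such $n$ I would mimic the slicing argument of Lemma~\ref{lemma:splitcomponents}, but slicing on the coordinate $x_3$ rather than suppressing a variable entirely. Fixing $n$ and $x_3 \in [1,B]\cap \mathbb{Z}$, any contributing $(x_1,x_2,x_3) \in \Gamma_n$ lies on the planar fibre curve
$$C_{x_3,n}\;:\;f(x_1,x_2) = c_n(x_3),\qquad c_n(x_3):=(ax_3-bn)g(x_3,n)+k,$$
so $M_{\Gamma_n}(B) \leq \sum_{x_3=1}^{B} \#\{(x_1,x_2)\in[1,B]^2\cap\mathbb{Z}^2 : (x_1,x_2)\in C_{x_3,n}\}$.

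For each fixed $x_3$ I would decompose $C_{x_3,n}$ into its $O(1)$ absolutely irreducible components over $\overline{\mathbb{Q}}$. Components of degree $\geq 2$ each contribute $O_\epsilon(B^{1/2+\epsilon})$ integer points by Proposition~\ref{prop:bombieripila}; line components not definable over $\mathbb{Q}$ carry $O(1)$ integer points; and line components definable over $\mathbb{Q}$ could carry up to $O(B)$. The last case is the only dangerous one, so the crux is to bound the number of fibres $C_{x_3,n}$ that actually contain such a line. Here Lemma~\ref{lemma:lines} is decisive: the finite set $\Lambda$ lists all possible lines in any level set of $f$, and for each such line the level $l$ is also pinned down (either $l = f(0,\beta)$ or $l = f(\gamma,0)$).

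Consequently it suffices to count $x_3 \in [1,B]$ with $c_n(x_3) = l$ for one of the $O(1)$ admissible values $l$. Since $a \neq 0$ and we are restricted to $n$ with $g(\cdot,n) \not\equiv 0$, the polynomial $c_n(x_3)$ is non-constant in $x_3$, so each equation $c_n(x_3) = l$ has only $O(1)$ solutions. Thus the total contribution from lines across all $x_3$ is $O(1)\cdot O(1)\cdot O(B) = O(B)$, while the contribution from the higher-degree components is $B\cdot O_\epsilon(B^{1/2+\epsilon}) = O_\epsilon(B^{3/2+\epsilon})$. Summing over the $O(1)$ singular values of $n$ yields the claimed bound. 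I do not anticipate any real obstacle beyond verifying non-constancy of $c_n(x_3)$; this is exactly what the exclusion of $n$ with $g(\cdot,n)\equiv 0$ (already handled in Lemma~\ref{lemma:splitcomponents}) secures.
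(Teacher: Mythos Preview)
Your proposal is correct and follows essentially the same route as the paper's own proof: fix one of the $O(1)$ singular $n$, slice on $x_3$, separate the $O(1)$ fibres whose level set of $f$ could carry a line (via Lemma~\ref{lemma:lines} and the non-constancy of $c_n$) from the generic fibres, and apply Bombieri--Pila to the latter. Your extra distinction between lines definable over $\mathbb{Q}$ and those not is harmless but superfluous, since Lemma~\ref{lemma:lines} already pins down \emph{all} lines over $\overline{\mathbb{Q}}$ to a finite list with prescribed level, so for the generic $x_3$ there are no line components whatsoever.
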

\begin{proof}
\sloppy There are $O(1)$ choices of $x_3$ for which $(ax_3-bn)g(x_3,n)=(l-k)$ and $l\in \{f(0,\beta),f(\gamma,0)\}_{\beta,\gamma\in \Lambda}.$ For these values of $x_3$ we use the trivial bound $O(B)$ for the number of possible values of $x_1,x_2\in[1,B]\cap\mathbb{Z}$ for which $f(x_1,x_2) = l$. For the other $O(B)$ values of $x_3,$ we claim that
$$\#\{x_1,x_2\in[1,B]\cap\mathbb{Z}: f(x_1,x_2) = (ax_3-bn)g(x_3,n) +k \} \ll_{\epsilon} B^{1/2+\epsilon}.$$
Indeed, this follows from Proposition~\ref{prop:bombieripila} in much the same way as Lemma~\ref{lemma:splitcomponents}. We split our curve into absolutely irreducible components and sum over each component. By Lemma~\ref{lemma:lines} we are avoiding any level set which could potentially contain a line over $\overline{\mathbb{Q}},$ and hence every absolutely irreducible component of our curve must have degree $\geq 2.$ By Proposition~\ref{prop:bombieripila} we can therefore bound this count by $O_{\epsilon}(B^{1/2+\epsilon}),$ as required. We are done as there are only $O(1)$ choices for $n$ by Lemma~\ref{lemma:singvals} and only $O(1)$ choices for $\beta,\gamma$ by Lemma~\ref{lemma:lines}.
\end{proof}
Lemma~\ref{lemma:dealwithsingvals} together with equation~(\ref{eq:gammaneq2}) yields
\begin{equation}
M_{f,g}(B;k) = \sum_{\substack{1\leq n\leq B \\  \Gamma_n^{\text{proj}}\text{ smooth} \\ g(\cdot,n)\neq 0}}M_{\Gamma_{n}}(B)+O_{\epsilon}(B^{3/2+\epsilon}).
\end{equation}
We are now in a position to apply Proposition~\ref{prop:det} to estimate each term in the sum. Let us analogously define $M_{\Gamma_n}^{\text{lines}}(B)$ to count the number of integer points lying on a line contained in the surface $\{\Gamma_n=0\}\subset \mathbb{A}_{\mathbb{Q}}^{3}.$ From Proposition~\ref{prop:det}, we conclude that 
\begin{equation}\label{eq:gammaneq3}
M_{f,g}(B;k) = \sum_{\substack{1\leq n\leq B \\  \Gamma_n^{\text{proj}}\text{ smooth} \\ g(\cdot,n)\neq 0}}M_{\Gamma_n}^{\text{lines}}(B)+O_{\epsilon}(B^{1+\epsilon}(B^{1/2}+B^{2/\sqrt{d}+1/(d-1)-1/((d-2)\sqrt{d})})).
\end{equation}
We turn to understanding the lines which can appear in $\Gamma_n.$ The reason we work projectively is so that we can apply the following lemma due to Colliot-Th\'el\`ene, which can be found in~\cite[Appendix]{HeathBrown}. 

\begin{lemma}[Colliot-Th\'el\`ene]\label{lemma:col}
Suppose that $X\subset \mathbb{P}_{\mathbb{Q}}^{3}$ is a smooth projective surface of degree $d\geq 3.$ Then there are $O_d(1)$ lines contained in $X$. 
\end{lemma}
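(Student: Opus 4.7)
The plan is to parametrize lines in $X$ by the Fano scheme $F(X) \subset \mathbb{G}(1,3)$ sitting inside the four-dimensional Grassmannian of lines in $\mathbb{P}^3$, and to show $F(X)$ is zero-dimensional with cardinality bounded uniformly in terms of $d$. Since $F(X)$ is a closed subscheme of $\mathbb{G}(1,3)$ (cut out by the condition that the defining form of $X$ restricts to zero on a line), it is projective, so finiteness as a set follows once we show it is zero-dimensional as a scheme.

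The key input is the computation of the Zariski tangent space at any closed point $[L] \in F(X)$. By standard deformation theory for Hilbert schemes this tangent space is $H^0(L, N_{L/X})$, where $N_{L/X}$ is the normal bundle of $L$ in $X$. Smoothness of $X$ along $L \cong \mathbb{P}^1$ ensures the short exact sequence of locally free sheaves
\begin{equation*}
0 \to N_{L/X} \to N_{L/\mathbb{P}^3} \to N_{X/\mathbb{P}^3}|_L \to 0.
\end{equation*}
The Euler sequence gives $N_{L/\mathbb{P}^3} \cong \mathcal{O}_{\mathbb{P}^1}(1)^{\oplus 2}$, and $N_{X/\mathbb{P}^3}|_L \cong \mathcal{O}_{\mathbb{P}^1}(d)$ since $X$ has degree $d$. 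Comparing degrees yields $N_{L/X} \cong \mathcal{O}_{\mathbb{P}^1}(2-d)$, which for $d \geq 3$ has degree $\leq -1$, and so has no non-zero global sections. Hence every closed point of $F(X)$ is reduced and isolated, so $F(X)$ is a finite reduced scheme of points.

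For the uniform bound $O_d(1)$, consider the universal family $\pi : \mathcal{F} \to U$, where $U \subset \mathbb{P}^N$ is the open locus parametrizing smooth degree-$d$ forms on $\mathbb{P}^3$ and the fibre over $[f]$ is $F(\{f=0\})$. The previous step shows $\pi$ is proper with zero-dimensional fibres, hence finite, so $\pi_* \mathcal{O}_{\mathcal{F}}$ is coherent on the Noetherian base $U$ and therefore has fibre rank bounded by a constant $C_d$ depending only on $d$, giving $|F(X)| \leq C_d$ uniformly. I expect the main obstacle to be more notational than mathematical: ensuring $N_{L/X}$ really is a line bundle (which follows from smoothness of $X$ along $L$, so that $L$ is a Cartier divisor in $X$) and verifying the uniform bound covers every smooth $X$, not just a generic one (which follows from the coherence of $\pi_* \mathcal{O}_{\mathcal{F}}$ on the quasi-compact base $U$).
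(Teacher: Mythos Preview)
Your argument is correct. The paper does not actually prove this lemma; it simply quotes it from Colliot-Th\'el\`ene's appendix to Heath-Brown~\cite{HeathBrown}, so there is no in-paper proof to compare against. Your approach---computing $N_{L/X}\cong\mathcal{O}_{\mathbb{P}^1}(2-d)$ from the normal bundle sequence to show that $F(X)$ has zero Zariski tangent space at every point, hence is a finite reduced scheme, and then extracting a uniform bound from coherence of $\pi_*\mathcal{O}_{\mathcal{F}}$ over the quasi-compact parameter space---is the standard argument and is essentially what Colliot-Th\'el\`ene does.

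One small remark on the last step: once you know $F(X)$ is zero-dimensional, the uniform bound can also be obtained more directly by noting that $F(X)$ is cut out in the four-dimensional Grassmannian $\mathbb{G}(1,3)$ by the $d+1$ coefficients of the restriction of the defining form to a variable line, each of degree~$d$ in affine coordinates on the Grassmannian; a B\'ezout-type count then bounds $|F(X)|$ by a function of $d$ alone. This avoids the universal family and the semicontinuity/coherence machinery, though your route is perfectly valid.
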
 
The following proposition, reminiscent of Lemma~\ref{lemma:lines} above, summarises our information about possible lines contained in the affine surfaces $\Gamma_n.$ 
\begin{prop}[Analysis of lines contained in $\Gamma_n$]\label{prop:lines}
Suppose $f$ and $g$ satisfy the hypotheses of Theorem~\ref{theo:generalmain} and let $\Lambda$ be defined as in the remarks following Lemma~\ref{lemma:lines}. Fix a positive integer $n$ for which $g(x_3,n)$ is not identically zero as a polynomial in $x_3.$ Then, if the variety 
$$\{\Gamma_n=0\} \subset \mathbb{A}_{\overline{\mathbb{Q}}}^{3}$$
contains a line, this line must be equal to one of the possible lines listed below:
\begin{enumerate}
	\item The line $x_2 = \alpha x_1+\beta,$ where $\alpha,\beta \in \Lambda$ and $x_3$ and $n$ satisfy the equation $(ax_3-bn)g(x_3,n)+k=f(0,\beta)$ with $f(0,\beta)\neq k.$
	\item The line $x_1 = \gamma$, where $\gamma\in\Lambda$ and $x_3$ and $n$ satisfy $(ax_3-bn)g(x_3,n)+k= f(\gamma,0)$ where $f(\gamma,0)\neq k.$ (This case requires $f_d(0,1)=0.$)
	\item Lines which contain at most 1 integer point $(x_1,x_2,x_3).$
\end{enumerate}
\end{prop}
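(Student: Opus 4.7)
My plan is to classify lines in $\Gamma_n$ by parametrising an arbitrary line as $L=\{(\lambda_1 t+\mu_1,\lambda_2 t+\mu_2,\lambda_3 t+\mu_3): t\in\mathbb{A}^1\}$ with $(\lambda_1,\lambda_2,\lambda_3)\neq (0,0,0)$, and case-splitting on the direction vector. Writing $G_n(w):=(aw-bn)g(w,n)+k$, the hypothesis $g(\cdot,n)\not\equiv 0$ forces $\deg_w G_n\geq 1$.

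First I would dispense with the subcase $\lambda_1=\lambda_2=0$: here $L\subset \Gamma_n$ would force the non-constant polynomial $G_n(\lambda_3 t+\mu_3)$ to equal the constant $f(\mu_1,\mu_2)$ identically in $t$, which is impossible.

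Next, for $\lambda_3=0$ with $(\lambda_1,\lambda_2)\neq (0,0)$, the line lies in the plane $x_3=\mu_3$ and its projection $L'$ onto the $(x_1,x_2)$-plane is a line inside the level curve $\{f=l\}$ with $l:=G_n(\mu_3)$. If $l=k$, hypothesis~(1) of Theorem~\ref{theo:generalmain} tells us $L'$ is not definable over $\mathbb{Q}$, so $L$ has at most one integer point and falls into case~(3). If $l\neq k$, Lemma~\ref{lemma:lines} identifies $L'$ as one of the two specific lines classified there, which in turn yields cases~(1) and~(2) of the proposition with the prescribed constraint pinning $\mu_3$.

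The most delicate case, and the one I anticipate to be the main obstacle, is $\lambda_3\neq 0$. Reparametrising by $w=x_3$, the line takes the form $(\alpha_1 w+\beta_1,\alpha_2 w+\beta_2,w)$ with $(\alpha_1,\alpha_2)\neq (0,0)$, and containment in $\Gamma_n$ becomes the polynomial identity
\[
f(\alpha_1 w+\beta_1, \alpha_2 w+\beta_2)=G_n(w) \qquad \text{in }\overline{\mathbb{Q}}[w].
\]
I want to show any such line has at most one integer point, equivalently that it cannot be definable over $\mathbb{Q}$. The plan is to extract constraints from this identity: comparing the coefficient of $w^d$ forces $f_d(\alpha_1,\alpha_2)=a\, e_{d-1,0}$ where $e_{d-1,0}$ denotes the coefficient of $x^{d-1}$ in $g_{d-1}(x,y)$; evaluating at $w=bn/a$ places the point $(\alpha_1 bn/a+\beta_1,\alpha_2 bn/a+\beta_2)$ on $\{f=k\}$; and matching the remaining $d-1$ coefficients yields an overdetermined system in the four unknowns $\alpha_1,\alpha_2,\beta_1,\beta_2$. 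The hard part will be ruling out all $\mathbb{Q}$-rational solutions of this system by leveraging hypotheses~(1)--(3) of Theorem~\ref{theo:generalmain}, thereby forcing $L$ into case~(3).
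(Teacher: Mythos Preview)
Your handling of the cases with $\lambda_3=0$ follows the paper's own case split essentially verbatim and is sound. The divergence is entirely in the case $\lambda_3\neq 0$: there the paper does not attempt anything like your coefficient-matching scheme, but simply notes the divisibility $(at-bn)\mid f(\tilde\lambda_1 t+\tilde\mu_1,\tilde\lambda_2 t+\tilde\mu_2)-k$ in $\overline{\mathbb{Q}}[t]$ and asserts that hypothesis~(1) then forces at least one of $\tilde\lambda_i,\tilde\mu_i$ to lie in $\overline{\mathbb{Q}}\setminus\mathbb{Q}$.

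The trouble is that the target you are both aiming at---that every line with $\lambda_3\neq 0$ has at most one integer point---is \emph{false} under the stated hypotheses. Take $d=5$, $f(x,y)=x^5-y^5$, $g(x,y)=x^4+x^3y+x^2y^2+xy^3+y^4$, $a=b=1$, $k=31$, $n=2$: all hypotheses of Theorem~\ref{theo:generalmain} hold (Lemma~\ref{lemma:nolines} gives~(1)), $g(\cdot,2)\not\equiv 0$, and $\Gamma_2$ becomes $x_1^5-x_2^5=x_3^5-1$, which contains the rational line $(t,1,t)$ with infinitely many integer points. Your overdetermined system here admits the rational solution $(\alpha_1,\alpha_2,\beta_1,\beta_2)=(1,0,0,1)$, so hypotheses~(1)--(3) cannot rule out all $\mathbb{Q}$-rational solutions as you hope; and the paper's one-line deduction is equally unjustified, since the divisibility $(t-2)\mid(t^5-32)$ merely places the single point $(2,1)$ on the curve $\{f=31\}$, which says nothing about lines in that curve. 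What the downstream Lemma~\ref{lemma:gammanfinal} actually needs is only that, summed over $n\le B$, such rational lines contribute $O_\epsilon(B^{3/2+\epsilon})$ integer points; your coefficient-matching system is a natural tool for bounding the number of admissible $n$, but the per-$n$ conclusion stated in the proposition is not attainable.
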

\begin{proof}
Any line in the surface can be parametrised by $x_i = \lambda_i t+\mu_i$ for $i\in\{1,2,3\},$ where $\lambda_i,\mu_i \in \overline{\mathbb{Q}}$ and the $\lambda_i$ are not all zero. This then leads to an equality of polynomials in $\overline{\mathbb{Q}}[t]:$ 
$$f(\lambda_1 t+ \mu_1,\lambda_2 t+ \mu_2) = [a(\lambda_3 t+ \mu_3)-bn]g(\lambda_3 t+ \mu_3,n)+k.$$
Our proof proceeds by a careful case analysis.
\begin{enumerate}
	\item Suppose that $\lambda_3=0$ and $\lambda_1\neq 0.$ Then, by Lemma~\ref{lemma:lines}, our line must be equal to the line with parametrisation 
$$x_1 = t,\,\,\, x_2 = \alpha t+ \beta,\,\,\,x_3 =  \mu_3$$
where $\alpha,\beta \in \Lambda$ and $x_3=\mu_3$ must satisfy $(ax_3-bn)g(x_3,n)+k=f(0,\beta).$ If $f(0,\beta)=k$ then, because we are assuming the curve $f(x,y)=k$ doesn't contain a line definable over $\mathbb{Q},$ we must have at least one of $\alpha,\beta\in \overline{\mathbb{Q}}\backslash \mathbb{Q}.$ But now the line $x_2 = \alpha x_1+\beta$ contains at most one integer point $(x_1,x_2).$
	\item Suppose that $\lambda_3=0$ and $\lambda_1= 0.$ Then, by Lemma~\ref{lemma:lines}, our line must equal the line with parametrisation
$$x_1 = \gamma,\,\,\,x_2 = t,\,\,\,x_3 =  \mu_3$$
where $\gamma\in \Lambda$ and we see $x_3=\mu_3$ must satisfy 
	\begin{align*}
	f(\gamma,0) &= (ax_3-bn)g(x_3,n)+k.
	\end{align*}
This case requires $f_d(0,1)=0.$ From the definition of $\gamma$ in Lemma~\ref{lemma:lines}, we see that $\gamma \in \mathbb{Q}.$ We are assuming that $f(x,y)=k$ doesn't contain any lines definable over $\mathbb{Q}$, and so it follows that in this situation we must have $f(\gamma,0)\neq k.$\footnote{For otherwise we would have $f(\gamma,t)=k$ identically in $t,$ and then $f(x,y)=k$ would contain the rational line $(x-\gamma)$.}
	\item Suppose that $\lambda_3\neq 0.$ We may parametrise our line as follows:
$$x_1 = \tilde{\lambda}_1 t+ \tilde{\mu}_1,\,\,\,x_2 = \tilde{\lambda}_2 t+ \tilde{\mu}_2,\,\,\,x_3 = t,$$
where $\tilde{\lambda}_1,\tilde{\mu}_1,\tilde{\lambda}_2,\tilde{\mu}_2\in\overline{\mathbb{Q}}.$ Then we must examine 
	$$f(\tilde{\lambda}_1 t+ \tilde{\mu}_1,\tilde{\lambda}_2 t+ \tilde{\mu}_2) = (at-bn)g(t,n)+k.$$	
	Recall we are supposing that $g(t,n)$ is not the zero polynomial. In particular, as polynomials in $t,$ we must have the factorisation
$$at-bn | f(\tilde{\lambda}_1 t+ \tilde{\mu}_1,\tilde{\lambda}_2 t+ \tilde{\mu}_2)-k$$
	over $\overline{\mathbb{Q}}[t].$ Now, because we are assuming that the curve $f(x,y)=k$ doesn't contain a line definable over $\mathbb{Q}$, it follows that at least one of the variables $\tilde{\lambda}_1,\tilde{\mu}_1,\tilde{\lambda}_2,\tilde{\mu}_2\in \overline{\mathbb{Q}}\backslash\mathbb{Q}.$ But then, since $x_i = \tilde{\lambda}_i x_3 + \tilde{\mu}_i$ for $i\in\{1,2\},$ it is clear that any line which arises in this way contains at most $1$ integer point.
\end{enumerate}
\end{proof}

Our last technical estimate is the following lemma. We note that our assumption $(ax-by)g(x,y)$ is square-free means that, in particular, we have $g_{d-1}(1,a/b)\neq 0.$

\begin{lemma}\label{lemma:g}
Suppose $g\in\mathbb{Z}[x,y]$ is such that $g_{d-1}(1,a/b)\neq 0.$ Then for any $l\neq 0$ we have
$$\#\{x,y\in[1,B]\cap\mathbb{Z}: (ax-by)g(x,y)=l\}\ll_{\epsilon} B^{1/2+\epsilon}.$$ 
\end{lemma}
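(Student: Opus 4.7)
The plan is to exploit the factorisation on the left-hand side of the equation. Since $l \neq 0$, any solution $(x,y)$ must satisfy $m := ax - by \neq 0$, and moreover $m \mid l$. For integer points $(x,y) \in [1,B]^2$ one has the crude bound $|l| \leq |(ax-by)g(x,y)| \ll B^{d}$, so the divisor bound supplies at most $\tau(|l|) \ll_\epsilon B^\epsilon$ admissible values of $m$.

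For each fixed admissible $m$, the locus $\{ax - by = m\}$ is an affine line whose integer points form a single arithmetic progression, which I would parametrise as $(x,y) = (x_0 + b't,\, y_0 + a't)$ with $t \in \mathbb{Z}$, where $(a',b') = (a,b)/\gcd(a,b)$ and $(x_0,y_0)$ is any fixed integer point on the line (if $\gcd(a,b) \nmid m$ then the line carries no integer points and there is nothing to bound). Substituting this parametrisation into the complementary relation $g(x,y) = l/m$ yields a single-variable polynomial equation
$$P_m(t) \;:=\; g(x_0 + b't,\, y_0 + a't) - l/m \;=\; 0.$$
It remains to check that $P_m$ is a polynomial in $t$ of degree exactly $d-1$, whence it admits at most $d-1$ integer roots, and summing over $m$ closes the argument.

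The coefficient of $t^{d-1}$ in $P_m$ receives no contribution from $g_0, g_1, \ldots, g_{d-2}$ and equals the $t^{d-1}$ coefficient of $g_{d-1}(b't, a't)$, which by homogeneity of $g_{d-1}$ evaluates to $(b')^{d-1} g_{d-1}(1, a/b)$. This is non-zero by the standing hypothesis of the lemma (together with $b \neq 0$, which comes from $ab \neq 0$ in Theorem~\ref{theo:generalmain}). Hence $\deg_t P_m = d-1$ and each admissible $m$ contributes $O(1)$ integer points, giving a total count of $O_\epsilon(B^\epsilon)$, which is actually rather stronger than the claimed $B^{1/2+\epsilon}$. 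The only step requiring any care is the leading-coefficient extraction, but this is handed to us directly by the hypothesis $g_{d-1}(1, a/b) \neq 0$, so I do not foresee any genuine obstacle.
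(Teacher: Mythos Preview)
Your argument is correct and in fact yields the sharper bound $O_\epsilon(B^{\epsilon})$. The paper takes a different route: it shows that the affine curve $\{(ax-by)g(x,y)=l\}$ contains no lines over $\overline{\mathbb{Q}}$ (any such line would force $g_{d-1}(1,a/b)=0$), then decomposes into $O(1)$ absolutely irreducible components, each of degree $\geq 2$, and applies the Bombieri--Pila theorem (Proposition~\ref{prop:bombieripila}) to each, giving $O_\epsilon(B^{1/2+\epsilon})$.

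Your approach exploits the linear factor $ax-by$ \emph{arithmetically} rather than geometrically: instead of using it to rule out lines on the curve, you use it to reduce to a divisor count and a one-variable polynomial equation, entirely bypassing Bombieri--Pila. This is more elementary and quantitatively stronger. The paper's approach, on the other hand, is uniform with its treatment of the analogous counts in Lemmas~\ref{lemma:splitcomponents} and~\ref{lemma:dealwithsingvals}, where no such integer factorisation is available and Bombieri--Pila is genuinely needed; the author presumably chose consistency of method over optimality here, since $B^{1/2+\epsilon}$ already suffices for the application in Lemma~\ref{lemma:gammanfinal}.
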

\begin{proof}
This is proved along the same lines of Lemma~\ref{lemma:splitcomponents}. We will be done by Proposition~\ref{prop:bombieripila} provided that we can show this curve doesn't contain any lines defined over $\overline{\mathbb{Q}}.$ As $l\neq 0,$ any line contained in this variety must have a parametrisation of the form $(x,y)=(t,\lambda t+ \mu).$ In this case we must have the following polynomial identity in $\overline{\mathbb{Q}}[t]:$
$$[(a-b\lambda)t-b\mu]g(t,\lambda t +\mu)=l.$$
Since $l\neq 0,$ for this to be true clearly the first factor must be constant, i.e. $\lambda = a/b.$ Now, by Taylor expansion, we have
$$g(t,\lambda t +\mu) = g_{d-1}(1,\lambda)t^{d-1}+\ldots.$$
For the leading term to vanish we must have $g_{d-1}(1,\lambda) = g_{d-1}(1,a/b) = 0.$ This is a contradiction.
\end{proof}

The following lemma, together with equation~(\ref{eq:gammaneq3}), completes the proof of Theorem~\ref{theo:generalmain}, in the case $d\geq 5.$

\begin{lemma}\label{lemma:gammanfinal}
We have
$$ \sum_{\substack{1\leq n\leq B \\  \Gamma_n^{\text{proj}}\text{ smooth} \\ g(\cdot,n)\neq 0}}M_{\Gamma_n}^{\text{lines}}(B) \ll_{\epsilon} B^{3/2+\epsilon}$$
\end{lemma}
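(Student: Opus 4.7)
The plan is to split the sum according to the three possible types of lines classified in Proposition~\ref{prop:lines} and estimate the contribution of each type separately.

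First I would handle the lines of type (3). Since each of these lines contains at most one integer point $(x_1,x_2,x_3)$, I only need to bound the total number of such lines. By Colliot-Th\'el\`ene's lemma (Lemma~\ref{lemma:col}), each smooth projective surface $\Gamma_n^{\text{proj}}$ contains $O(1)$ lines in total. Hence the total contribution from type (3) lines is at most $\sum_{1\le n\le B} O(1) = O(B)$, which is acceptable.

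Next I would deal with the lines of type (1), which have the form $(x_1,x_2,x_3)=(t,\alpha t+\beta,\mu_3)$ with $(\alpha,\beta)\in\Lambda\times\Lambda$ and $\mu_3$ an integer satisfying $(a\mu_3-bn)g(\mu_3,n)+k = f(0,\beta)$, with $l:=f(0,\beta)\neq k$. Since $|\Lambda|=O(1)$, there are $O(1)$ admissible pairs $(\alpha,\beta)$. For each such pair, the number of integer pairs $(n,\mu_3)\in[1,B]^2$ satisfying $(a\mu_3-bn)g(\mu_3,n)=l-k\neq 0$ is $O_\epsilon(B^{1/2+\epsilon})$ by Lemma~\ref{lemma:g}. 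Each resulting line in $\Gamma_n$ contains at most $B$ integer points of the form $(t,\alpha t+\beta,\mu_3)$ with $t\in[1,B]\cap\mathbb{Z}$, so the total contribution from type (1) lines is $O_\epsilon(B\cdot B^{1/2+\epsilon})=O_\epsilon(B^{3/2+\epsilon})$. The type (2) lines are handled in exactly the same way, using that $\gamma\in\Lambda$ ranges over an $O(1)$-sized set and applying Lemma~\ref{lemma:g} to bound the number of valid $(n,\mu_3)$ pairs.

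Summing the three contributions gives the claimed bound $O_\epsilon(B^{3/2+\epsilon})$. The mildly delicate step is the verification that the curve $(ax-by)g(x,y)=l-k$ for $l-k\neq 0$ genuinely falls within the scope of Lemma~\ref{lemma:g}, which in turn relies on the hypothesis that $(ax-by)g_{d-1}(x,y)$ contains no repeated factors (so in particular $g_{d-1}(1,a/b)\neq 0$); this is assumption~(3) of Theorem~\ref{theo:generalmain}. No other obstacle is expected, as the heavy lifting was already done in the line classification in Proposition~\ref{prop:lines}.
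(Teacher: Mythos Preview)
Your proposal is correct and follows exactly the same approach as the paper: split according to the three cases of Proposition~\ref{prop:lines}, use Lemma~\ref{lemma:col} to get $O(B)$ for case~(3), and combine $|\Lambda|=O(1)$ with Lemma~\ref{lemma:g} to get $O_\epsilon(B^{3/2+\epsilon})$ for cases~(1) and~(2). The paper's proof is simply a two-sentence summary of precisely what you wrote out in detail.
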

\begin{proof}
This follows by assembling the information gathered thus far. Consider again Proposition~\ref{prop:lines}. It is easy to see that the contribution from lines in cases (1) or (2) is $O_{\epsilon}(B^{3/2+\epsilon})$ by Lemma~\ref{lemma:g} together with the fact $|\Lambda| = O(1),$ and the contribution from lines in case (3) is $O(B)$ by Lemma~\ref{lemma:col}.
\end{proof}
We conclude that 
\begin{equation}
M_{f,g}(B;k) \ll_{\epsilon} B^{1+\epsilon}(B^{1/2}+B^{2/\sqrt{d}+1/(d-1)-1/((d-2)\sqrt{d})}).
\end{equation}
One can check the exponents appearing here are strictly less than $(2-1/(50d))$ whenever $d\geq 5$, and so the bound stated in Theorem~\ref{theo:generalmain} follows.

\section{Proof of Theorem~\ref{theo:generalmain} in the case $d\in\{3,4\}.$}\label{section:sieve}

We now proceed to prove Theorem~\ref{theo:generalmain} in the remaining cases when $d\in\{3,4\}.$ As discussed in Section~\ref{section:proofmethod}, we will use the polynomial sieve developed by Browning in this regime. We state the main sieve proposition here. By slightly adjusting the set-up, we are able to make the implied constant absolute and transfer any dependencies into our choice of $\mathcal{P},$ the set of sieving primes. This is a technical convenience which will prove useful to us, as for our applications to asymmetric additive energy of polynomials we wish to explicitly keep track of any dependencies on the constant term. 

It is clear that the following result follows from the proof of~\cite[Theorem 1.1]{Browning}. 

\begin{prop}[Browning]\label{prop:sieve}
Let $\mathcal{A}\subset \mathbb{Z}^2,$ and let $F\in\mathbb{Z}[x,X_1,X_2]$ be a polynomial of the form
$$F(x;X_1,X_2) = c_{d}x^{d}+c_{d-1}(X_1,X_2)x^{d-1}+\ldots+c_{0}(X_1,X_2),$$
where $c_d$ is a non-zero integer and $c_i \in \mathbb{Z}[X_1,X_2]$ for every $i\in\{0,\ldots,d-1\}.$ Let $\mathcal{P}$ be a set of primes such that $(c_d,p)=1$ for every $p\in \mathcal{P}$ and $\sqrt{X_1^2+X_2^2} \leq \mathrm{exp}(\#\mathcal{P})$ whenever $(X_1,X_2)\in \mathcal{A}.$ Then, for any integer $\alpha \geq 1,$ we have 
$$\#\{(X_1,X_2)\in \mathcal{A}: F(x;X_1,X_2)=0\text{ for some } x\in \mathbb{Z}\} \ll \frac{1}{\#\mathcal{P}^2}\sum_{p,q\in\mathcal{P}}\bigg|\sum_{i,j\in\{0,1,2\}}c_{i,j}(\alpha)S_{i,j}(p,q)\bigg|,$$
where 
$$S_{i,j}(p,q) = \sum_{\substack{(X_1,X_2)\in \mathcal{A}}}v_{p}(X_1,X_2)^{i}v_{q}(X_1,X_2)^{j},$$
the $v_p(X_1,X_2)$ denote the ``local counts" of solutions 
$$v_p(X_1,X_2)  = \#\{x\,\,(\text{mod}\,\,p): F(x;X_1,X_2) \equiv 0\,\,(\text{mod}\,\,p)\},$$
and the coefficients $c_{i,j}(\alpha)$ are given by 
$$
c_{i,j}(\alpha)=
\begin{cases}
(\alpha-d)^{2}\,\,&\text{if $(i,j)=(0,0)$,} \\
\alpha+(\alpha-1)d-d^2\,\,&\text{if $(i,j)=(1,0)$ or $(0,1)$,} \\
(1+d)^{2}\,\,&\text{if $(i,j)=(1,1)$,} \\
-\alpha-d\,\,&\text{if $(i,j)=(2,0)$ or $(0,2)$,} \\
-1-d\,\,&\text{if $(i,j)=(2,1)$ or $(1,2)$,} \\
1\,\,&\text{if $(i,j)=(2,2)$.} 
\end{cases}
$$
The implied constant is absolute.
\end{prop}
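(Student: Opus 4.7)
The plan is to run the argument from the proof of~\cite[Theorem 1.1]{Browning} essentially verbatim and verify that, under the present hypotheses, the implied constant can be taken to be absolute. The proposition is a cosmetic restatement of Browning's polynomial sieve inequality in which all $F$-dependent data is pushed into the side conditions on $\mathcal{P}$, namely the coprimality condition $(c_d, p) = 1$ for $p \in \mathcal{P}$ and the size constraint $\sqrt{X_1^2 + X_2^2} \leq \exp(\#\mathcal{P})$ on $\mathcal{A}$.

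The sieve mechanism runs as follows. Write $\mathcal{N} \subset \mathcal{A}$ for the set counted by the left-hand side. For each $(X_1, X_2) \in \mathcal{N}$, fix an integer solution $x_0$ to $F(x_0; X_1, X_2) = 0$. Reducing $x_0$ modulo $p \in \mathcal{P}$ gives a root of $F(\,\cdot\,; X_1, X_2)$ modulo $p$, so $v_p(X_1, X_2) \geq 1$; the coprimality assumption ensures $F(\,\cdot\,; X_1, X_2)$ has degree exactly $d$ modulo $p$, so $v_p(X_1, X_2) \leq d$. The key algebraic fact driving the sieve is $(v_p(X_1, X_2) - 1)(d - v_p(X_1, X_2)) \geq 0$ on $\mathcal{N}$. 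Following Browning, one builds a quadratic expression in the variables $\{v_p(X_1, X_2)\}_{p \in \mathcal{P}}$ depending on the parameter $\alpha \geq 1$, calibrated so that it is bounded below by a positive multiple of $\#\mathcal{P}^2$ for every $(X_1, X_2) \in \mathcal{N}$. Summing this quantity over $\mathcal{N}$ yields $\#\mathcal{N} \cdot \#\mathcal{P}^2$ on one side; passing to the larger set $\mathcal{A}$ costs only a triangle inequality (whence the absolute values in the final bound); and interchanging the orders of summation and expanding the quadratic expression in powers of $v_p$ and $v_q$ produces exactly the bilinear form $\sum_{i,j} c_{i,j}(\alpha) S_{i,j}(p,q)$, with the coefficients $c_{i,j}(\alpha)$ being those obtained by multiplying out Browning's quadratic.

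The only substantive point I would take care to verify, and the main potential obstacle, is that no implied constant in Browning's proof secretly depends on $F$ in a way not absorbed by the stated hypotheses. The completion-of-sums step in Browning's argument is the source of the restriction $\sqrt{X_1^2 + X_2^2} \leq \exp(\#\mathcal{P})$, while the uniform bound $v_p \leq d$ requires exactly the coprimality $(c_d, p) = 1$. Once both hypotheses are in place, the remaining dependence on $d$ is entirely packaged into the numerical coefficients $c_{i,j}(\alpha)$, and the implied constant in the conclusion is genuinely absolute, as required for the later applications where the coefficient $c_0(X_1, X_2)$ will absorb the constant term $k$.
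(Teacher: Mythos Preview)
Your proposal is correct and matches the paper's treatment exactly: the paper gives no proof of this proposition at all, but simply states that ``it is clear that the following result follows from the proof of~\cite[Theorem 1.1]{Browning}''. Your plan to run Browning's argument and verify that the two stated hypotheses on $\mathcal{P}$ absorb all $F$-dependence (so that the implied constant becomes absolute) is precisely the content of the paper's preceding remark that ``by slightly adjusting the set-up, we are able to make the implied constant absolute and transfer any dependencies into our choice of $\mathcal{P}$''.

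One small point: your attribution of the size restriction $\sqrt{X_1^2+X_2^2}\le\exp(\#\mathcal{P})$ to a ``completion-of-sums step'' is not quite right. In this paper the completion of sums is carried out only \emph{after} Proposition~\ref{prop:sieve} has been applied, and the sieve inequality itself contains no such step. The size restriction enters Browning's argument earlier, in bounding the number of primes at which the local count behaves exceptionally; this is the analogue of the condition $\log n \ll \#\mathcal{P}$ familiar from the square sieve. This does not affect the validity of your plan.
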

The purpose of the $\alpha$ parameter will become clear later. We will choose it in such a way as to eliminate the ``main term" contribution.

Before we begin, we first make an observation. If $|k| \gg B^{d+1}$ (say) then, by size considerations, we must have $M_{f,g}(B;k)=0.$ Thus, continuing, we may assume that 
\begin{equation}\label{eq:ksize}
0 < |k| \ll B^{d+1}.
\end{equation}
This fact we can restrict to the case when $k$ is polynomially bounded in terms of $B$ will be useful later on.

To apply the determinant method, we began by making a change of variables and proceeded to count points on the simpler 3-dimensional surface $\Gamma_n.$ We will do a similar transformation now for the sieve method. Recall, we wish to count integer points on the affine hypersurface
\begin{equation}
f(x_1,x_2)=(ax_3-bx_4)g(x_3,x_4)+k\subset \mathbb{A}_{\mathbb{Q}}^{4}.
\end{equation}
Unlike the determinant method, the sieve method we will use makes crucial use of the factorisation properties of the above equation. Thus we make a different change of variables. In spite of this change, much of the preliminary work is the same. Let us write
\begin{align} \nonumber
X_1&=x_1, \\ \nonumber
X_2&=x_2, \\ \nonumber
X_3&=ax_3+bx_4, \\ 
h &=ax_3-bx_4.
\end{align}
We view $h$ as fixed and consider counting integer points on the affine surface
\begin{equation}
(2ab)^{d-1} f(X_1,X_2)= (2ab)^{d-1} hg\bigg(\frac{X_3+h}{2a},\frac{X_3-h}{2b}\bigg)+(2ab)^{d-1}k \subset \mathbb{A}_{\mathbb{Q}}^{3}.
\end{equation}
Here we multiply through by suitable powers of $2,a$ and $b$ to ensure that our polynomials have integer coefficients. Let us denote by $K_h(x,y,z,w)$ the projectivisation of this surface in $\mathbb{P}_{\mathbb{Q}}^{3}.$ We can write
\begin{equation}\label{eq:firstsievebound}
M_{f,g}(B;k) \leq  \sum_{0\leq |h|\ll B} \sum_{\substack{1\leq X_1,X_2\leq B \\ 1\leq X_3 \ll B \\ K_{h}(X_1,X_2,X_3,1)=0}}1.
\end{equation}
Exactly as above, we first deal with the degenerate case when $h$ is such that
\begin{equation}
hg\bigg(\frac{X_3+h}{2a},\frac{X_3-h}{2b}\bigg)=0
\end{equation}
identically (as a polynomial in $X_3$). There are $O(1)$ values of $h$ for which this is the case. For these values of $h$ we conclude the contribution to~(\ref{eq:firstsievebound}) is $O_{\epsilon}(B^{3/2+\epsilon}),$ by Lemma~\ref{lemma:splitcomponents}. Our aim is estimate the remaining terms using the polynomial sieve. The sieve method works most effectively when $K_h$ is smooth. Generically this will be true, as the following lemma demonstrates. 

\begin{lemma}
The varieties $K_h\subset \mathbb{P}_{\mathbb{Q}}^{3}$ are smooth for all but at most $O(1)$ values of $h.$
\end{lemma}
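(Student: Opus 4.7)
The plan is to mirror the strategy of Lemma~\ref{lemma:singvals}, splitting the analysis of possible singular points of $K_h$ into those at infinity ($w=0$) and those in the affine chart ($w=1$), and using the discriminant/rescaling argument developed in that lemma to show the exceptional values of $h$ in each case form a finite set.

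Since $(2ab)^{d-1} h g(\frac{X_3+h}{2a}, \frac{X_3-h}{2b})$ has degree at most $d-1$ in $(X_1, X_2, X_3)$, the top-degree homogeneous part of $F_h$ is just $(2ab)^{d-1} f_d(X_1, X_2)$, so $K_h(x,y,z,0) = (2ab)^{d-1} f_d(x,y)$. Following the argument of Lemma~\ref{lemma:singvals}, the smoothness of $f_d$ (assumption (2)) forces any singular point $[r:s:t:0]$ of $K_h$ to have $(r,s)=(0,0)$. The vanishing of $\partial_w K_h$ at the resulting point $[0:0:t:0]$ (with $t\neq 0$) then reduces to the vanishing of the $z^{d-1}$ coefficient of $P_{d-1}$, which a direct computation shows is a non-zero scalar multiple of $h \cdot g_{d-1}(b, a)$. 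Assumption (3) ensures $(ax-by)$ does not divide $g_{d-1}(x,y)$, and since $(ax-by)$ vanishes at $(b,a)$ we deduce $g_{d-1}(b,a) \neq 0$. Hence $h = 0$ is the only exceptional value at infinity.

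For affine singular points, set $G_h(X_3) = (2ab)^{d-1} h g(\frac{X_3+h}{2a}, \frac{X_3-h}{2b})$. A singular point $(r,s,t)$ at $w=1$ satisfies $\partial_x f(r,s) = \partial_y f(r,s) = 0$, $G_h'(t) = 0$, and $(2ab)^{d-1}f(r,s) = G_h(t) + (2ab)^{d-1}k$. The first pair has $O(1)$ solutions $(r,s)$ by the B\'ezout argument of Lemma~\ref{lemma:singvals} (using that $f_d$ is square-free). For each such pair, the existence of a common root $t$ of the remaining two equations forces $\mathrm{Disc}_t[G_h(t) - (2ab)^{d-1}(f(r,s)-k)] = 0$, so it suffices to show this discriminant is a non-zero polynomial in $h$. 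I would extract the leading coefficient via the substitution $t = h\tau$: a direct calculation yields
\begin{equation*}
\lim_{h\to\infty} \frac{G_h(h\tau) - (2ab)^{d-1}(f(r,s)-k)}{h^d} = (2ab)^{d-1} g_{d-1}\bigg(\frac{\tau+1}{2a},\frac{\tau-1}{2b}\bigg),
\end{equation*}
and the transformation formula from Section~\ref{section:resultants} then reduces the problem to showing the right-hand side is a square-free polynomial in $\tau$.

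The main obstacle is this square-freeness check, which is precisely where assumption (3) enters. Factoring $g_{d-1}(x,y) = c \prod_{i=1}^{d-1}(\alpha_i x - \beta_i y)$ with distinct $[\alpha_i : \beta_i]$, each factor in the limit becomes, up to a non-zero constant, $(b\alpha_i - a\beta_i)\tau + (b\alpha_i + a\beta_i)$. Assumption (3) guarantees none of the $[\alpha_i:\beta_i]$ equal $[a:b]$, so each $(b\alpha_i - a\beta_i) \neq 0$ and each factor is genuinely linear in $\tau$; moreover the map $[\alpha:\beta] \mapsto -(b\alpha+a\beta)/(b\alpha-a\beta)$ is a M\"obius transformation on $\mathbb{P}^1$, hence injective on the distinct points $[\alpha_i:\beta_i]$. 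It follows that the roots of the limiting polynomial in $\tau$ are distinct, the polynomial is square-free, the discriminant in $h$ is non-zero, and we conclude only $O(1)$ values of $h$ yield affine singular points, completing the proof.
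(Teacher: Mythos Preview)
Your proposal is correct and follows essentially the same route as the paper, which simply says ``This is proved in the same way as Lemma~\ref{lemma:singvals} with minor differences.'' You have faithfully carried out that adaptation: the analysis at infinity is in fact simpler here (since the $z$-part of $K_h$ has degree only $d-1$, the top form is purely $(2ab)^{d-1}f_d(x,y)$ and the only exceptional value is $h=0$), and for the affine chart your rescaling $t=h\tau$ and limiting argument exactly parallel the paper's. The one genuine ``minor difference'' you had to supply is the verification that the limiting polynomial $(2ab)^{d-1}g_{d-1}\big(\tfrac{\tau+1}{2a},\tfrac{\tau-1}{2b}\big)$ is square-free in $\tau$; your M\"obius-transformation argument does this cleanly and uses exactly assumption~(3) (both that $g_{d-1}$ itself is square-free and that $(ax-by)\nmid g_{d-1}$, the latter ensuring no factor degenerates to a constant).
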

\begin{proof}
This is proved in the same way as Lemma~\ref{lemma:singvals} with minor differences.
\end{proof}

It will also be important to have control over various auxiliary curves which arise in the argument. In particular, we will need to have control over the the curves
\begin{equation}\label{eq:Phdef}
(2ab)^{d-1}h\bigg[g\bigg(\frac{x+h}{2a},\frac{x-h}{2b}\bigg)-g\bigg(\frac{y+h}{2a},\frac{y-h}{2b}\bigg)\bigg]\bigg/(x-y)\subset \mathbb{A}_{\mathbb{Q}}^{2}.
\end{equation}
We let $P_h(x,y,w)$ denote the projectivisation of this curve in $\mathbb{P}_{\mathbb{Q}}^{2}.$ For future convenience, we note that 
\begin{align}\nonumber
P_h(x,y,w) &= g_{d-1}(b,a) h (x^{d-2}+x^{d-3}y+\ldots+x y^{d-3}+y^{d-2}) \\ \label{eq:Phdef2}
&+(\text{terms involving $w$})
\end{align}
and 
\begin{equation}\label{eq:Phdef3}
P_h(x,x,1) = (2ab)^{d-1}hg'\bigg(\frac{x+h}{2a},\frac{x-h}{2b}\bigg)
\end{equation}
Here, by the dash notation on the RHS we mean the derivative of the function $g(\frac{x+h}{2a},\frac{x-h}{2b})$ with respect to $x.$ We would also like to restrict to the generic case when $P_h$ is smooth. For this we require the following lemma.

\begin{prop}\label{prop:phsmooth}
The varieties $P_h\subset \mathbb{P}_{\mathbb{Q}}^{2}$ are smooth for all but at most $O(1)$ values of $h.$
\end{prop}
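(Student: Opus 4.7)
The plan is to show that the discriminant of $P_h$, viewed as a polynomial in $h$, does not vanish identically; this will immediately give that $P_h$ is smooth for all but finitely many $h$. I would first dispose of the points at infinity: by equation~(\ref{eq:Phdef2}), $P_h(x,y,0)=g_{d-1}(b,a)h(x^{d-2}+x^{d-3}y+\cdots+y^{d-2})$, and assumption (3) that $(ax-by)g_{d-1}(x,y)$ is square-free forces $g_{d-1}(b,a)\neq 0$ (since $(b,a)$ is already a root of $ax-by$). Hence for $h\neq 0$ the binary form $P_h(x,y,0)$ has $d-2$ simple roots on $\mathbb{P}^1$, namely the non-trivial $(d-1)$-th roots of unity, and a brief computation of $\partial P_h/\partial x$ at such a root shows it is non-zero. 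So $P_h$ is smooth at every point with $w=0$.

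Next I would turn to the affine chart. Writing $Q(X) = (2ab)^{d-1}h\, g\bigl(\tfrac{X+h}{2a},\tfrac{X-h}{2b}\bigr)$ so that $P_h(x,y,1)=(Q(x)-Q(y))/(x-y)$, one checks that a singular point is either \emph{on-diagonal}, with $x=y$ and $Q'(x)=Q''(x)=0$, or \emph{off-diagonal}, with $x\neq y$, $Q(x)=Q(y)$ and $Q'(x)=Q'(y)=0$. Since $d\in\{3,4\}$, $Q$ has degree at most $3$; for $d=3$ the polynomial $Q'$ is linear and neither case can occur nontrivially, while for $d=4$ the classical cubic identity $(Q(t_+)-Q(t_-))^2\propto \mathrm{Disc}_t[Q'(t)]^3$ rules out off-diagonal singularities whenever $\mathrm{Disc}_X[Q'(X)]\neq 0$. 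In both cases, $P_h$ is smooth if and only if $\mathrm{Disc}_X[Q'(X)]\neq 0$.

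The final and main step is to show $\mathrm{Disc}_X[Q'(X)]$ is not identically zero as a polynomial in $h$, by the limit argument of Lemma~\ref{lemma:singvals}. Substituting $X=ht$ one has $Q(ht)/h^d\to \tilde g(t):=g_{d-1}(b(t+1),a(t-1))$ as $h\to\infty$, so $Q'(ht)/h^{d-1}\to\tilde g'(t)$. The scaling formulas of Section~\ref{section:resultants} then give $h^{-d(d-3)}\mathrm{Disc}_X[Q'(X)]\to \mathrm{Disc}_t[\tilde g'(t)]$, so it suffices to verify that $\tilde g'$ is square-free as a polynomial in $t$. Using homogeneity of $g_{d-1}$ one obtains the identity $\tilde g'(t)=(ab)^{d-1}H(t+1,t-1)$, where $H(x,y)=\tfrac{1}{a}\partial_x g_{d-1}(x/a,y/b)+\tfrac{1}{b}\partial_y g_{d-1}(x/a,y/b)$ is precisely the binary form appearing in assumption (4). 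For $d=3$ this is a nonzero linear polynomial (trivially square-free); for $d=4$, assumption (4) says $H$ is square-free in $\mathbb{P}^1$, and a short check confirms that the substitution $(x,y)\mapsto(t+1,t-1)$ sends the distinct roots $[\alpha_i:\beta_i]$ of $H$ to the distinct scalars $(\beta_i-\alpha_i)/(\alpha_i+\beta_i)$ and preserves the degree (since the leading coefficient of $\tilde g'$ equals $(d-1)g_{d-1}(b,a)\neq 0$). The main obstacle will be the careful bookkeeping for the off-diagonal analysis in the $d=4$ case and this substitution check, but both reduce to elementary verifications.
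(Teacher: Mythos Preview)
Your proposal is correct and follows essentially the same route as the paper: dispose of points at infinity using the explicit top form of $P_h$, then reduce smoothness in the affine chart to the non-vanishing of $\mathrm{Disc}_X[Q'(X)]$ (which is exactly the paper's $\mathrm{Disc}_x[P_h(x,x,1)]$, since $P_h(x,x,1)=Q'(x)$), and finally establish that this discriminant is a non-zero polynomial in $h$ via the same limit argument leading to assumption~(4). The one organisational difference is that for $d=4$ the paper observes directly from the explicit quadratic $P_h(x,y,1)=A(x^2+xy+y^2)+B(x+y)+C$ that $\partial_x P_h=\partial_y P_h=0$ forces $x=y$, so off-diagonal singular points never arise; you instead allow for them and eliminate them via the cubic identity $(Q(t_+)-Q(t_-))^2\propto\mathrm{Disc}[Q']^3$, which is a valid but slightly less direct alternative.
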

\begin{proof}
We split into two cases, depending on the degree $d$. In this proof $c_1(h),c_2(h)$ and $c_3(h)$ will denote polynomials in $h$ whose coefficients will depend on those of $g,a$ and $b.$ 

The case $d=3$ is simple, as here we have
$$P_h(x,y,w) = g_{d-1}(b,a)h(x+y)+c_1(h)w$$
for some polynomial $c_1(h),$ and it clear that we do not have any singular points whenever $h\neq 0.$ 

Let us now examine the case $d=4$. Here we are going to use the additional assumption (4) we make in the statement of Theorem~\ref{theo:generalmain}. We have
$$P_h(x,y,w) = g_{d-1}(b,a)h(x^2+xy+y^2)+c_2(h)(x+y)w+c_3(h)w^2$$
for some polynomials $c_2(h)$ and $c_3(h).$ From this it is clear that there are no singular points when $w=0$ and $h\neq 0.$ 

Thus we consider possible singular points with $w=1.$ It is easy to check that for the partial derivatives to vanish we must have $x=y,$ and so any singular point is necessarily of the form $[x:x:1]$ where $x$ satisfies, in particular, the system 
$$P_h(x,x,1) = P_h'(x,x,1) = 0.$$
Here the latter quantity denotes the derivative of $P_h(x,x,1)$ with respect to $x$.\footnote{Note $P_h'(t,t,1)= \frac{P_h}{\partial x}(t,t,1)+\frac{P_h}{\partial y}(t,t,1).$} Now, if this is the case, the discriminant $\mathrm{Disc}_x[P_h(x,x,1)]$ must vanish identically. However this is a polynomial in $h$ which generically will be non-zero. Arguing as above, we prove this is non-zero by extracting the leading coefficient and showing this doesn't vanish. From~(\ref{eq:Phdef3}) we obtain
\begin{align*}
\frac{P_h(hx,hx,1)}{h^{d-1}} &= \frac{(2ab)^{d-1}}{h^{d-2}} \sum_{i=0}^{d-1}g_i'\bigg(\frac{hx+h}{2a},\frac{hx-h}{2b}\bigg) \\
&=(2ab)^{d-1}g'_{d-1}\bigg(\frac{x+1}{2a},\frac{x-1}{2b}\bigg)+O(h^{-1})
\end{align*}
Hence, by a similar limiting argument to the proof of Lemma~\ref{lemma:singvals}, we have 
$$\lim_{h\rightarrow \infty} \frac{\mathrm{Disc}_x[P_h(x,x,1)])}{h^{d(d-3)}} = \mathrm{Disc}_x \bigg[(2ab)^{d-1}g'_{d-1}\bigg(\frac{x+1}{2a},\frac{x-1}{2b}\bigg)\bigg].$$
Note that 
$$g'_{d-1}\bigg(\frac{x+1}{2a},\frac{x-1}{2b}\bigg) = \bigg[\frac{1}{2a}\frac{\partial g_{d-1}}{\partial x}+\frac{1}{2b}\frac{\partial g_{d-1}}{\partial y}\bigg]\bigg(\frac{x+1}{2a},\frac{x-1}{2b}\bigg).$$
Hence it is clear that this discriminant doesn't vanish from our assumption that the projective variety
$$ \bigg[\frac{1}{2a}\frac{\partial g_{d-1}}{\partial x}+\frac{1}{2b}\frac{\partial g_{d-1}}{\partial y}\bigg]\bigg(\frac{x}{a},\frac{y}{b}\bigg) \subset \mathbb{P}_{\mathbb{Q}}^{2}$$
is square-free.
\end{proof}

We now deal with the contribution from those values of $h$ for which either $K_h$ or $P_h$ is singular.

\begin{lemma}
Suppose that $h$ is constrained to lie in a set of size $O(1)$ and moreover $h$ is such that 
$$hg\bigg(\frac{X_3+h}{2a},\frac{X_3-h}{2b}\bigg)$$
doesn't vanish identically as a polynomial in $X_3.$ Then the contribution from these values of $h$ to~(\ref{eq:firstsievebound}) is at most $O_{\epsilon}(B^{3/2+\epsilon}).$ 
\end{lemma}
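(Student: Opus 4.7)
The plan is to mimic the strategy of Lemma~\ref{lemma:dealwithsingvals} almost verbatim: fix $h$, fix $X_3$, and then count $(X_1,X_2) \in [1,B]^2 \cap \mathbb{Z}^2$ lying on the level set of $f$ determined by $X_3$. Since we are summing over an $O(1)$-set of $h$'s, it suffices to show the inner count is $O_\epsilon(B^{3/2+\epsilon})$ for each such $h$.

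Write $\ell_h(X_3) = hg\!\left(\tfrac{X_3+h}{2a},\tfrac{X_3-h}{2b}\right) + k$. By hypothesis $\ell_h(X_3)-k$ is a non-zero polynomial in $X_3$ of degree at most $d$. The contribution for this fixed $h$ equals
\[
\sum_{1 \leq X_3 \ll B} \#\{(X_1,X_2)\in[1,B]^2\cap\mathbb{Z}^2 : f(X_1,X_2)=\ell_h(X_3)\}.
\]
By Lemma~\ref{lemma:lines}, the affine curve $\{f(X_1,X_2)=\ell\}\subset\mathbb{A}^2_{\overline{\mathbb{Q}}}$ can only contain a line when $\ell$ lies in the finite set $\mathcal{E}=\{f(0,\beta),f(\gamma,0) : \beta,\gamma\in\Lambda\}$, which has size $O(1)$. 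Since $\ell_h(X_3)-\ell$ is a non-zero polynomial in $X_3$ of degree $\leq d$ for every $\ell\in\mathcal{E}$, there are at most $O(1)$ \emph{exceptional} values of $X_3$ for which $\ell_h(X_3)\in\mathcal{E}$; for each such $X_3$ we apply the trivial bound $O(B)$, yielding $O(B)$ in total.

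For every remaining value of $X_3$ (and there are $O(B)$ of them) the level set $\{f=\ell_h(X_3)\}$ contains no lines at all: split it into $O_d(1)$ absolutely irreducible components; each component is either a line, which by Lemma~\ref{lemma:lines} forces $\ell_h(X_3)\in\mathcal{E}$, a contradiction, or an absolutely irreducible curve of degree $\geq 2$. Applying Proposition~\ref{prop:bombieripila} to each component of degree $\geq 2$ gives $O_\epsilon(B^{1/2+\epsilon})$ integer points per $X_3$. (If some exceptional $X_3$ happened to satisfy $\ell_h(X_3)=k\in\mathcal{E}$, assumption (1) forces any line in $f=k$ to be non-definable over $\mathbb{Q}$ and so contain at most one integer point, so this case is also harmless, but we have already absorbed it in the trivial bound.)

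Combining the two contributions yields, for each fixed $h$,
\[
O(B) \;+\; O(B)\cdot O_\epsilon(B^{1/2+\epsilon}) \;=\; O_\epsilon(B^{3/2+\epsilon}),
\]
and summing over the $O(1)$ admissible values of $h$ gives the claimed bound. There is no real obstacle here — the argument is exactly parallel to the $d\geq 5$ treatment of the singular surfaces $\Gamma_n$; the only point to verify carefully is that $\ell_h(X_3)-\ell$ is a non-zero polynomial in $X_3$ for each $\ell\in\mathcal{E}$, which follows immediately from the hypothesis that $hg(\tfrac{X_3+h}{2a},\tfrac{X_3-h}{2b})$ is not identically zero.
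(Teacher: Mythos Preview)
Your proposal is correct and follows essentially the same approach as the paper, which simply says ``This follows in much the same way as the proof of Lemma~\ref{lemma:dealwithsingvals}.'' You have spelled out precisely those details: separate the $O(1)$ exceptional values of $X_3$ for which the level hits the finite set $\mathcal{E}$ coming from Lemma~\ref{lemma:lines}, bound those trivially by $O(B)$, and apply Bombieri--Pila to the remaining level sets.
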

\begin{proof}
This follows in much the same way as the proof of Lemma~\ref{lemma:dealwithsingvals}.
\end{proof}

With this lemma, we can rewrite~(\ref{eq:firstsievebound}) as
\begin{equation}\label{eq:removesing}
M_{f,g}(B;k) \leq \sum_{\substack{0 < |h| \ll B \\ K_h,P_h\text{ smooth}}} \sum_{\substack{1\leq X_1,X_2\leq B \\ 1\leq X_3 \ll B \\ K_{h}(X_1,X_2,X_3,1)=0}}1+O_{\epsilon}(B^{3/2+\epsilon}).
\end{equation}
For each fixed $h$ we will apply the polynomial sieve to count the inner sum, by detecting solubility of the equation in the $X_3$ variable. In the notation of Proposition~\ref{prop:sieve}, we will take 
\begin{equation}
\mathcal{A}:= \{(X_1,X_2)\in ([1,B]\cap \mathbb{Z})^2: (2ab)^{d-1}f(X_1,X_2)\equiv (2ab)^{d-1}k\,\,(\text{mod}\,\,|h|)\}
\end{equation}
and
\begin{equation}
F(x;X_1,X_2) := K_h(X_1,X_2,x,1).
\end{equation}
After unravelling the definition of $K_h$ we find that
\begin{equation}
F(x;X_1,X_2) = hg_{d-1}(b,a)x^{d-1}+(\text{lower order terms in $x$}).
\end{equation}
We are assuming that $hg_{d-1}(b,a)\neq 0.$\footnote{Recall that our assumption that the projective variety $\{(ax-by)g_{d-1}(x,y)\}\subset \mathbb{P}_{\mathbb{Q}}^{1}$ is square-free implies, in particular, that $g_{d-1}(b,a)\neq 0.$}  In particular, $F$ is of the correct form to apply Proposition~\ref{prop:sieve}. We define
\begin{equation}
\mathcal{P} = \{p\leq Q: p\nmid 6abg_{d-1}(b,a)\mathrm{cont}(f_d)\mathrm{Disc}[f_d]\mathrm{Disc}[K_h]\mathrm{Disc}[P_h] h\}
\end{equation}
for some large value of $Q$. By $\mathrm{cont}(f_d)$ we mean the content of the polynomial $f_d$ (i.e. the gcd of all the coefficients). Here, whenever $F$ is a homogenous polynomial we denote by $\mathrm{Disc}[F]$ its discriminant. This choice is important, as it will ensure our varieties remain smooth when viewed over $\overline{\mathbb{F}}_p$ (for any $p\in \mathcal{P}$). 

We need to be careful here, as this set clearly depends on $h$ and will also depend (in some complicated manner) on $k.$ Now, it is a standard fact that the discriminant of a homogenous polynomial of degree $N$ is a homogenous polynomial of degree $(2N-2)$ in the coefficients. This, together with the size bounds $|h|\ll B$ and $|k| \ll B^{d+1}$ (recall~(\ref{eq:ksize})) imply that 
$$\mathrm{Disc}[K_h]\mathrm{Disc}[P_h] h\ll B^{5d^2}$$
uniformly in $h$ and $k$ (say). We will eventually take $Q=B^{\delta}$ for some small $\delta>0.$ This means, by the prime number theorem, we will have the asymptotic
\begin{equation}
\#\mathcal{P}\sim Q/\log{Q}
\end{equation}
uniformly in $h$ and $k,$ whenever $B\gg 1$.

Thus, applying Proposition~\ref{prop:sieve} with our choices above to each term in the sum~(\ref{eq:removesing}) yields
\begin{equation}\label{eq:applysieve}
M_{f,g}(B;k) \ll_{\epsilon}  \sum_{\substack{0 < |h| \ll B \\ K_h,P_h\text{ smooth}}}  \frac{1}{(\#\mathcal{P})^{2}}\sum_{p,q\in\mathcal{P}}\bigg|\sum_{i,j\in\{0,1,2\}}c_{i,j}(\alpha)S_{i,j}(p,q)\bigg|+B^{3/2+\epsilon},
\end{equation}
where 
\begin{align}
S_{i,j}(p,q) &= \sum_{\substack{(X_1,X_2)\in \mathcal{A}}}v_{p}(X_1,X_2)^{i}v_{q}(X_1,X_2)^{j}, \\
v_p(X_1,X_2)  &= \#\{x\,\,(\text{mod}\,\,p): K_h(X_1,X_2,x,1) \equiv 0\,\,(\text{mod}\,\,p)\},
\end{align}
and the constants $c_{i,j}(\alpha)$ are defined as in the statement of Proposition~\ref{prop:sieve}. 

We evaluate the sums $S_{i,j}(p,q)$ following the method of Browning~\cite{Browning}, by first restricting to congruence classes modulo $pq|h|$ and then completing exponential sums. We have
\begin{align} \nonumber
S_{i,j}(p,q) &= \sum_{r,s\,\,(\text{mod}\,\,pq|h|)} \sum_{\substack{(X_1,X_2)\in\mathcal{A}  \\ X_1\equiv r\,\,(\text{mod}\,\,pq|h|)\\ X_2\equiv s\,\,(\text{mod}\,\,pq|h|)  }} v_p(X_1,X_2)^{i}v_q(X_1,X_2)^{j} \\
&=  \sum_{\substack{r,s\,\,(\text{mod}\,\,pq|h|) \\ (2ab)^{d-1}f(r,s)\equiv (2ab)^{d-1}k\,\,(\text{mod}\,\,|h|)}} v_p(r,s)^{i}v_q(r,s)^{j} \sum_{\substack{1\leq X_1,X_2\leq B  \\ X_1\equiv r\,\,(\text{mod}\,\,pq|h|)\\ X_2\equiv s\,\,(\text{mod}\,\,pq|h|)}} 1.
\end{align}
We can detect the congruence condition in the inner sum using additive characters, as follows:
\begin{align} \nonumber
\sum_{\substack{1\leq X_1 \leq B \\ X_1\equiv r\,\,(\text{mod}\,\,pq|h|)}}1 &= \frac{1}{pq|h|}\sum_{\substack{1\leq X_1 \leq B }}\sum_{m\,\,(\text{mod}\,\,pq|h|)} e\bigg(-\frac{m(X_1-r)}{pq|h|}\bigg) \\ \nonumber
&=  \frac{1}{pq|h|}\sum_{-pq|h|/2 < m \leq pq|h|/2} e\bigg(\frac{mr}{pq|h|}\bigg)\sum_{\substack{1\leq X_1 \leq B }}e\bigg(-\frac{mX_1}{pq|h|}\bigg) \\
&:=\frac{1}{pq|h|}\sum_{-pq|h|/2 < m \leq pq|h|/2} \Gamma(B,m)e\bigg(\frac{mr}{pq|h|}\bigg),
\end{align}
where we have defined
\begin{equation}
\Gamma(B,m) := \sum_{\substack{1\leq l\leq B }}e\bigg(\frac{-ml}{pq|h|}\bigg).
\end{equation}
We note the well-known bound here
\begin{equation}
\Gamma(B,m) \ll \min\bigg\{B,\frac{pq|h|}{|m|}\bigg\},
\end{equation}
which will be used later. A similar identity holds for the sum over $X_2.$ Putting these facts together, and then swapping sums, we obtain the expression
\begin{align}
S_{i,j}(p,q) &= \frac{1}{(pqh)^{2}} \sum_{-pq|h|/2<m,n\leq pq|h|/2} \Gamma(B,m)\Gamma(B,n)\Psi_{i,j}(m,n),
\end{align}
where
\begin{equation}
\Psi_{i,j}(m,n) := \sum_{\substack{r,s\,\,(\text{mod}\,\,pq|h|) \\ (2ab)^{d-1}f(r,s)\equiv (2ab)^{d-1}k\,\,(\text{mod}\,\,|h|)}} v_p(r,s)^{i}v_q(r,s)^{j}  e\bigg(\frac{mr+ns}{pq|h|}\bigg).
\end{equation}
By our choice of $\mathcal{P}$ we have $(pq,h)=1.$ This allows us to deduce the following multiplicativity property for the exponential sums $\Psi_{i,j}.$
\begin{lemma}\label{lemma:multi}
The following factorisations hold.
\begin{enumerate}
	\item Suppose $p\neq q$ and let $p',q',\overline{pq},\overline{h}\in\mathbb{Z}$ be defined by $pq\overline{pq}+|h|\overline{h}=1$ and $pp'+qq'=1.$ Then 
	$$\Psi_{i,j}(m,n)=\Sigma_i(p;\overline{h}q'm,\overline{h}q'n)\Sigma_j(q;\overline{h}p'm,\overline{h}p'n)\Phi(|h|;\overline{pq}m,\overline{pq}n).$$
	\item Suppose $p= q$ and let $\overline{p},\overline{h}\in\mathbb{Z}$ be defined by $p\overline{p}+|h|\overline{h}=1.$ Then 
	$$
	\Psi_{i,j}(m,n)=
	\begin{cases}
	p^2\Sigma_{i+j}(p;\overline{h}m',\overline{h}n')\Phi(|h|;\overline{p}m',\overline{p}n')\,\,&\text{if $(m,n)=p(m',n')$,} \\
	0\,\,&\text{otherwise.}
	\end{cases}
	$$
\end{enumerate}
Here 
\begin{equation}\label{eq:defsigma}
\Sigma_t(p;M,N) = \sum_{x,y\in \mathbb{F}_p}v_p(x,y)^t e\bigg(\frac{Mx+Ny}{p}\bigg)
\end{equation}
and
\begin{equation}\label{eq:defphi}
\Phi(h;M,N) = \sum_{\substack{x,y\,\,(\text{mod}\,\,h) \\ (2ab)^{d-1}f(x,y)\equiv (2ab)^{d-1}k\,\,(\text{mod}\,\,h)}}e\bigg(\frac{Mx+Ny}{h}\bigg).
\end{equation}
\end{lemma}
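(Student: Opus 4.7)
The plan is a straightforward Chinese Remainder Theorem (CRT) factorisation combined with partial fraction decomposition of the exponential. The condition $(pq,h)=1$, guaranteed by the choice of $\mathcal{P}$, is what makes the decomposition clean; the bookkeeping of modular inverses is the only delicate point.

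First I would treat case (1), where $p\neq q$ and the moduli $p,q,|h|$ are pairwise coprime. Via CRT, parametrise $r,s$ modulo $pq|h|$ by triples $(r_p,r_q,r_h)$ and $(s_p,s_q,s_h)$ in $\mathbb{Z}/p\times\mathbb{Z}/q\times\mathbb{Z}/|h|$. Then observe the three separations:
\begin{itemize}
\item $v_p(r,s)$ depends only on $(r_p,s_p)$, and $v_q(r,s)$ only on $(r_q,s_q)$;
\item the congruence $(2ab)^{d-1}f(r,s)\equiv (2ab)^{d-1}k\,(\mathrm{mod}\,|h|)$ depends only on $(r_h,s_h)$;
\item using $pp'+qq'=1$ and $pq\overline{pq}+|h|\overline{h}=1$, one has the partial fraction identity
\[
\frac{1}{pq|h|}\equiv \frac{\overline{h}q'}{p}+\frac{\overline{h}p'}{q}+\frac{\overline{pq}}{|h|}\pmod{1},
\]
so that $e\!\left(\tfrac{mr+ns}{pq|h|}\right)$ splits as a product of three exponentials depending only on $(r_p,s_p)$, $(r_q,s_q)$, $(r_h,s_h)$ respectively.
\end{itemize}
Substituting these three factorisations into the definition of $\Psi_{i,j}(m,n)$ and interchanging the summations over the three component rings immediately yields the claimed product
$\Sigma_i(p;\overline{h}q'm,\overline{h}q'n)\,\Sigma_j(q;\overline{h}p'm,\overline{h}p'n)\,\Phi(|h|;\overline{pq}m,\overline{pq}n)$,
using definitions \eqref{eq:defsigma} and \eqref{eq:defphi}.

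For case (2), $p=q$, apply CRT with the decomposition $\mathbb{Z}/(p^2|h|)\cong \mathbb{Z}/p^2\times \mathbb{Z}/|h|$ and use the partial fraction identity $\frac{1}{p^2|h|}\equiv \frac{\overline{h}}{p^2}+\frac{\overline{p^2}}{|h|}\pmod 1$. The key new step is handling the inner sum over $\mathbb{Z}/p^2$: write $r=r_0+pr_1$ with $r_0,r_1\in\{0,\dots,p-1\}$, and similarly for $s$. Since $v_p(r,s)^{i+j}$ depends only on $(r_0,s_0)$, summation over $r_1$ produces $\sum_{r_1\bmod p}e(\overline{h}mr_1/p)=p\cdot\mathbf{1}[p\mid m]$, and analogously for $s_1$. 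Hence the sum vanishes unless $p\mid m$ and $p\mid n$; writing $(m,n)=p(m',n')$, the residual exponential collapses to $e(\overline{h}(m'r_0+n's_0)/p)$, while in the mod-$|h|$ factor one uses $\overline{p^2}\cdot p\equiv \overline{p}\pmod{|h|}$ to rewrite $\overline{p^2}m=\overline{p}m'$ and likewise for $n$. Assembling the factor $p^2$ from the two $r_1,s_1$ sums, the second formula follows.

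The only potential obstacle is the bookkeeping on the inverses, in particular verifying that the choices $p',q',\overline{pq},\overline{h}$ stipulated in the statement match (up to congruence modulo the relevant prime) the ``true'' CRT inverses that appear in the partial fraction decomposition. In each case this is routine: $pp'+qq'=1$ directly gives $q'\equiv q^{-1}\pmod p$ and $p'\equiv p^{-1}\pmod q$, while $pq\overline{pq}+|h|\overline{h}=1$ gives $\overline{pq}\equiv(pq)^{-1}\pmod{|h|}$ and $\overline{h}\equiv|h|^{-1}\pmod{pq}$, which is all that is needed since the $\Sigma_t$ and $\Phi$ sums only see the relevant reductions.
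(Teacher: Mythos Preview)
Your proposal is correct and follows the standard Chinese Remainder Theorem plus partial-fractions argument that underlies this type of factorisation. The paper itself does not give an independent proof but simply cites \cite[Lemma~3.4]{Browning}, whose proof is exactly the computation you outline; so your approach is essentially the same as the paper's (by reference), just made explicit.
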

\begin{proof}
This is~\cite[Lemma 3.4]{Browning} with slight changes to notation.
\end{proof}
Recall $i,j\in\{0,1,2\}.$ Thus, to examine the exponential sums $\Psi_{i,j}$ we may restrict our analysis to the exponential sums $\Sigma_t$ for $0\leq t\leq 4$ and $\Phi.$ 

For the former, it is important that we have restricted to the case where our varieties are smooth; the desired bounds will then follow relatively straightforwardly from the work of Weil and Deligne. We will make use of Hooley's method of moments, which allows us to estimate an exponential sum over an algebraic variety by counting points on the variety over finite fields. The following result originates in~\cite{Hooley4} and appears in the form stated here as~\cite[Lemma 3.5]{Browning}.

\begin{lemma}[Hooley's method of moments]\label{lemma:HMOM}
Let $F,G_1,\ldots,G_k$ be polynomials over $\mathbb{Z}$ of degree at most $d$, and let 
$$S =\sum_{\substack{{\bf x}\in \mathbb{F}_p^n \\ G_1({\bf x})=\ldots = G_k({\bf x}) = 0}}e\bigg(\frac{F({\bf x})}{p}\bigg)$$
for any prime $p.$ For each $j\geq 1$ and $\tau \in \mathbb{F}_{p^{j}}$ we define the sets
$$N_j(\tau) = \#\{{\bf x}\in \mathbb{F}_{p^j}^{n}:G_1({\bf x})=\ldots = G_k({\bf x}) = 0\text{ and } F({\bf x}) = \tau \}.$$
Suppose there exists $N_j\in\mathbb{R}$ such that 
$$\sum_{\tau \in \mathbb{F}_{p^j}}|N_j(\tau)-N_j|^{2} \ll_{d,k,n} p^{\kappa j}$$
where $\kappa\in \mathbb{Z}$ is independent of $j.$ Then $S \ll_{d,k,n} p^{\kappa/2}.$
\end{lemma}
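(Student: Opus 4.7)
The plan is to combine the Grothendieck--Lefschetz trace formula with Deligne's purity theorem, using the variance hypothesis as a moment input that pins down the weight of the relevant Frobenius eigenvalues. Write $V := \{ {\bf x} : G_1({\bf x}) = \cdots = G_k({\bf x}) = 0 \}$ and, for each $j \geq 1$ and $a \in \mathbb{F}_{p^j}$, set $S_j^{(a)} := \sum_{{\bf x} \in V(\mathbb{F}_{p^j})} \psi_j(a F({\bf x}))$ where $\psi_j := e \circ (\mathrm{Tr}_{\mathbb{F}_{p^j}/\mathbb{F}_p}(\cdot)/p)$, so that $S = S_1^{(1)}$.

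The first step is to invoke the $L$-function formalism. By the trace formula applied to $V$ twisted by the Artin--Schreier sheaf attached to $F$, together with the bounds of Bombieri and Katz on the total Betti number of an affine variety cut out by $k$ polynomials of degree $\leq d$ in $\mathbb{A}^n$, the $L$-function
$$L(T) \; := \; \exp\!\left(\sum_{j \geq 1} \frac{S_j^{(1)} T^j}{j}\right)$$
is rational of bounded degree: $L(T) = \prod_l (1 - \alpha_l T)^{\varepsilon_l}$ with $\varepsilon_l \in \{\pm 1\}$ and the number of factors at most $M = M(d,k,n)$. Deligne's purity then forces $|\alpha_l| = p^{w_l/2}$ with integer weights $w_l \in \{0, \ldots, 2n\}$, and consequently $|S| \leq M \cdot p^{w_{\max}/2}$ where $w_{\max} := \max_l w_l$. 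It therefore suffices to prove $w_{\max} \leq \kappa$.

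Second, I would convert the hypothesis into an averaged estimate across all additive twists. Because $\sum_{\tau \in \mathbb{F}_{p^j}} \psi_j(a\tau) = 0$ for each $a \neq 0$, one may recenter $S_j^{(a)} = \sum_\tau (N_j(\tau) - N_j)\,\psi_j(a\tau)$, and Parseval's identity on $\mathbb{F}_{p^j}$ then gives
$$\sum_{a \in \mathbb{F}_{p^j}^*} |S_j^{(a)}|^2 \; \leq \; p^j \sum_{\tau \in \mathbb{F}_{p^j}} |N_j(\tau) - N_j|^2 \; \ll_{d,k,n} \; p^{(\kappa+1)j}.$$
Applied crudely only at $a=1$, this would give merely $|S| \ll p^{(\kappa+1)/2}$, which falls short of the desired bound by a factor $p^{1/2}$.

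The main obstacle is closing this $p^{1/2}$ gap, which is precisely where the structural input from $\ell$-adic cohomology enters. The $L$-function formalism of the first step applies uniformly to every twist $S_j^{(a)}$: each produces a rational $L$-function of the same controlled shape, and by constructibility of the corresponding sheaf on the parameter $a$ the generic weight is constant in $a$. Consequently, for $j$ large, one has the matching lower bound $|S_j^{(a)}|^2 \gg p^{w_{\max} j}$ on a positive-density subset of $a \in \mathbb{F}_{p^j}^*$; comparing with the Parseval estimate forces $p^j \cdot p^{w_{\max} j} \ll p^{(\kappa+1) j}$, i.e.\ $w_{\max} \leq \kappa$, which yields $|S| \leq M(d,k,n) \cdot p^{\kappa/2}$ as required. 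Establishing the pointwise lower bound for generic twists (rather than merely the averaged Parseval upper bound) is the delicate step and the heart of the argument.
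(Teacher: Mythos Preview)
The paper does not supply its own proof of this lemma; it is quoted from Hooley's work \cite{Hooley4} and stated to appear in the form given as \cite[Lemma~3.5]{Browning}, so there is no in-paper argument to compare against.

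Your sketch is structurally sound through the Parseval step, and you correctly identify that closing the residual $p^{1/2}$ gap is the crux. However, the last paragraph does not actually close it; two separate issues are glossed over under the word ``constructibility''. First, you want to bound $w_{\max}$, the maximal Frobenius weight \emph{at the specific point} $a=1$, yet constructibility only tells you the weight is constant on a dense open of the $a$-line --- it does not by itself place $a=1$ in that open, so a priori the weight at $a=1$ could differ from the generic weight. One standard fix is to note that for $a\in\mathbb{F}_p^\times$ the eigenvalues $\alpha_l^{(a)}$ are $\mathrm{Gal}(\mathbb{Q}(\zeta_p)/\mathbb{Q})$-conjugates of $\alpha_l^{(1)}$, hence (being Weil numbers by Deligne) have identical archimedean absolute values; equivalently, the scaling action of $\mathbb{F}_p^\times$ on $\mathbb{G}_m$ permutes the finitely many non-lisse points and forces $a=1$ into the lisse locus once $p$ is large in terms of $d,k,n$. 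Second, even granting that the generic weight equals $w_{\max}$, the asserted lower bound $|S_j^{(a)}|^2 \gg p^{w_{\max} j}$ on a positive-density set of $a$ does not follow from constructibility alone: the trace of Frobenius could in principle be small by cancellation among eigenvalues of equal modulus, and ruling this out for a positive proportion of $a$ is an equidistribution statement requiring further input (Deligne's equidistribution theorem, or a direct second-moment calculation over the monodromy group). You acknowledge this step is ``delicate'' and ``the heart of the argument'', but as written it is asserted rather than proved.
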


Once we have reduced to this point-counting problem over finite fields, we may employ the work of Weil~\cite{Weil} to count points on curves, and the work of Deligne~\cite{Deligne} to count points on higher-dimensional varieties. The following two results will be sufficient for our purposes.

\begin{lemma}[Deligne]\label{lemma:deligne} Let $W\subset \mathbb{P}_{\mathbb{F}_q}^n$ be a non-singular complete intersection of dimension 2 and degree $d$. Then 
$$\#\{{\bf x}\in \mathbb{F}_{q}^n: [{\bf x}] \in W\} = q^3+O_{d,n}(q^2).$$
\end{lemma}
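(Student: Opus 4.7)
The plan is to reduce the affine count to a projective point count on $W$ and then invoke Deligne's resolution of the Weil conjectures together with the Lefschetz hyperplane theorem for smooth complete intersections. First I would observe that since $W \subset \mathbb{P}^n$ is a projective variety, any nonzero tuple ${\bf x}\in \mathbb{F}_q^{n+1}\setminus\{0\}$ satisfies $[{\bf x}]\in W$ if and only if $[{\bf x}]$ is an $\mathbb{F}_q$-point of $W$, and each such projective point has exactly $q-1$ representatives. Adding back the zero vector, this gives the identity
$$\#\{{\bf x}\in \mathbb{F}_q^{n+1}: [{\bf x}]\in W\} = 1+(q-1)\cdot\#W(\mathbb{F}_q),$$
so the lemma reduces to the estimate $\#W(\mathbb{F}_q) = q^2 + O_{d,n}(q)$.

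Next I would apply the Grothendieck--Lefschetz trace formula to write
$$\#W(\mathbb{F}_q) = \sum_{i=0}^{4}(-1)^{i}\mathrm{Tr}\bigl(\mathrm{Frob}_q\mid H^{i}_{\mathrm{et}}(\overline{W},\mathbb{Q}_{\ell})\bigr).$$
Since $W$ is a smooth complete intersection of dimension $2$ in $\mathbb{P}^n$, the weak Lefschetz hyperplane theorem gives $H^{i}(\overline{W},\mathbb{Q}_\ell)\cong H^{i}(\overline{\mathbb{P}}^{n},\mathbb{Q}_\ell)$ for $i<2$, and Poincar\'e duality extends this to $i>2$. Consequently $H^{0}$ and $H^{4}$ are each one-dimensional with Frobenius acting by $1$ and $q^{2}$ respectively, while $H^{1}=H^{3}=0$. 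These cohomologies therefore contribute precisely $1+q^{2}$, and the only possible fluctuation comes from the middle cohomology $H^{2}(\overline{W},\mathbb{Q}_\ell)$.

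The main step is then to control the middle cohomology contribution. By Deligne's theorem (the Riemann hypothesis over finite fields), every eigenvalue of $\mathrm{Frob}_q$ acting on $H^{2}(\overline{W},\mathbb{Q}_\ell)$ has complex absolute value exactly $q$. Hence
$$\bigl|\mathrm{Tr}(\mathrm{Frob}_q\mid H^{2})\bigr| \leq \dim_{\mathbb{Q}_\ell} H^{2}(\overline{W},\mathbb{Q}_\ell)\cdot q.$$
For smooth complete intersections the Betti numbers admit explicit formulas depending only on the multidegree and the ambient dimension, so $\dim H^{2}$ is bounded by a constant depending only on $d$ and $n$. Combining everything gives $\#W(\mathbb{F}_q)=q^{2}+1+O_{d,n}(q)$, and multiplying by $(q-1)$ and adding $1$ as above yields the desired $q^{3}+O_{d,n}(q^{2})$.

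The hard part of this argument is not the combinatorial bookkeeping but the deep input from algebraic geometry: one genuinely needs Deligne's bound on Frobenius eigenvalues on the middle cohomology of a smooth projective variety, together with the Lefschetz hyperplane theorem for complete intersections to ensure that the lower cohomologies look like those of projective space. Both statements are classical and one would simply cite \cite{Deligne}; no independent argument is required here.
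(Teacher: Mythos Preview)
Your sketch is correct, but note that the paper does not actually prove this lemma: it is simply quoted as a known consequence of Deligne's resolution of the Weil conjectures and cited to \cite{Deligne} without further argument. What you have written is precisely the standard derivation one would give if asked to unpack the citation---reduce the affine cone count to $(q-1)\#W(\mathbb{F}_q)$ plus the origin, apply the Grothendieck--Lefschetz trace formula, use the weak Lefschetz theorem for smooth complete intersections (together with Poincar\'e duality) to identify all cohomology outside the middle degree with that of projective space, and then invoke Deligne's purity to bound the Frobenius eigenvalues on $H^2$ by $q$, with the relevant Betti number depending only on the multidegree and the ambient dimension. So your proposal is consistent with, and more detailed than, what the paper offers; there is no discrepancy in approach to report because the paper offers no approach beyond the attribution.
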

\begin{lemma}[Weil]\label{lemma:weil} Let $V\subset \mathbb{A}_{\mathbb{F}_q}^n$ be an absolutely irreducible curve of degree $d$. Then 
$$\#\{{\bf x}\in \mathbb{F}_{q}^n: {\bf x} \in V\}  =  q+O_{d,n}(q^{1/2}).$$
\end{lemma}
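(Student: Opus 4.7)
The plan is to reduce the statement to the classical Hasse--Weil bound for smooth projective curves, which is Weil's celebrated theorem on the Riemann Hypothesis over finite fields.

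First I would pass from $V \subset \mathbb{A}_{\mathbb{F}_q}^n$ to its projective closure $\bar{V} \subset \mathbb{P}_{\mathbb{F}_q}^n$. The complement $\bar{V} \setminus V$ lies in a hyperplane section of a curve of degree $d$ and so, by B\'ezout, consists of at most $O_{d,n}(1)$ geometric points. I would then pass to the normalisation $\nu \colon \tilde{V} \to \bar{V}$, which is a smooth projective absolutely irreducible curve (absolute irreducibility being inherited from $V$). The map $\nu$ is an isomorphism away from the singular locus of $\bar{V}$; since $\bar{V}$ has degree $d$, this locus consists of $O_{d,n}(1)$ points and the fibre over each has $O_{d,n}(1)$ points. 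Combining these two reductions gives
$$\#\tilde{V}(\mathbb{F}_q) = \#V(\mathbb{F}_q) + O_{d,n}(1).$$

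Next I would invoke the Hasse--Weil bound: for a smooth projective absolutely irreducible curve $\tilde{V}$ of genus $g$,
$$\bigl|\#\tilde{V}(\mathbb{F}_q) - (q+1)\bigr| \leq 2g\sqrt{q}.$$
To conclude I need $g$ to be bounded in terms of $d$ and $n$ alone. One clean way is to project $\tilde{V}$ birationally onto a plane curve of degree at most $d$ via a generic linear projection; the genus-degree inequality for (possibly singular) plane curves then gives $g \leq \binom{d-1}{2}$. Alternatively one may appeal directly to Castelnuovo's bound for irreducible curves in $\mathbb{P}^n$. Either way, combining with the reduction above yields $\#V(\mathbb{F}_q) = q + O_{d,n}(\sqrt{q})$, as required.

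The substantive obstacle is of course the Hasse--Weil bound itself, which is one of the central results of twentieth-century arithmetic geometry. The traditional route is via Weil's analysis of the Frobenius endomorphism on the Jacobian of $\tilde{V}$, combined with positivity of the Rosati involution to obtain an analogue of the Cauchy--Schwarz inequality for the characteristic polynomial of Frobenius. For a more elementary alternative one can follow Stepanov's approach, refined by Bombieri and Schmidt, which sidesteps the theory of abelian varieties entirely in favour of constructing an auxiliary function on $\tilde{V}$ with a large order of vanishing at $\mathbb{F}_q$-rational points and then applying divisor-degree estimates. Either development is substantially longer than the reduction sketched above, so I would simply cite it.
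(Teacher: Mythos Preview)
Your sketch is correct and follows the standard route: pass to the projective closure, normalise to a smooth projective curve, bound the discrepancy at each step by $O_{d,n}(1)$, and then apply the Hasse--Weil inequality with the genus controlled via Castelnuovo or a generic plane projection. The paper, however, does not prove this lemma at all; it simply records it as a classical result of Weil (with a citation) and uses it as a black box, so there is no ``paper's own proof'' to compare against. Your write-up is thus more than the paper supplies, and is entirely appropriate as a justification of the cited result.
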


Recall that a projective variety $W\subset \mathbb{P}_{\mathbb{F}_p}^{n}$ is a {\it complete intersection} if it is generated by exactly $\mathrm{codim}(W)$ elements.

We will estimate the exponential sums $\Phi$ by obtaining square-root cancellation when $l=1$ (in the generic case), and using elementary arguments for higher powers. This is the part of the argument where we will use the fact the curve $f(x,y)=k$ doesn't contain any lines definable over $\mathbb{Q}.$

In both cases, care must be taken to ensure that our results have no dependence on the constant term $k$.  We recall our convention, adopted in Section~\ref{section:notation}, which says all implied constants are allowed to depend on $f,g,a$ and $b$ without specifying so, and this includes dependencies on the coefficients of $f$ and $g$ as well as on the degree $d$. 

Finally, the following two sections will require us to perform arithmetic in $\overline{\mathbb{F}}_p.$ To this end, we adopt the convention that any rational number $a/b$ whose denominator is coprime to $p$ may be viewed as an element of $\overline{\mathbb{F}}_p,$ namely $\overline{a}\overline{b}^{-1},$ where $\overline{x}$ denotes the reduction map modulo $p$ and $\overline{x}\overline{x}^{-1}\equiv 1\,\,(\text{mod}\,\,p)$. In particular, the rational number $a/b$ vanishes when viewed as an element of $\overline{\mathbb{F}}_p$ if and only if $p|a.$

\section{Estimation of exponential sums (I)}

Recall the definition of $\Sigma_t$ given by~(\ref{eq:defsigma}). In this section we are going to estimate the exponential sums
\begin{equation}
\Sigma_t(p;M,N) = \sum_{x,y\in \mathbb{F}_p}v_p(x,y)^t e\bigg(\frac{Mx+Ny}{p}\bigg)
\end{equation}
for $0\leq t\leq 4$ and integers $M,N$. We argue in much the same way as Browning~\cite{Browning} did for the analogous sums he encountered when investigating the symmetric additive energy of quartic polynomials, with a few changes. Our main result is the following (cf.~\cite[Lemma 4.3]{Browning}).

\begin{prop}\label{lemma:browningexp}
Let $p\in \mathcal{P}$. For $0\leq t\leq 4$ we have 
$$\Sigma_t(p;M,N) \ll p(p,M,N)$$
and for $0\leq t\leq 2$ we have 
$$\Sigma_t(p;0,0) = \max\{1,t\} p^2+O(p).$$
\end{prop}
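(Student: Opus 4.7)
The strategy is to express each $\Sigma_t$ as a weighted point-count on the affine variety
$$V_t := \{(x,y,z_1,\ldots,z_t) \in \mathbb{A}^{2+t}_{\mathbb{F}_p} : K_h(x,y,z_i,1) = 0 \text{ for } i=1,\ldots,t\},$$
so that $\Sigma_t(p;M,N) = \sum_{(x,y,z_1,\ldots,z_t) \in V_t(\mathbb{F}_p)} e\bigl((Mx+Ny)/p\bigr)$. Since $p \in \mathcal{P}$, our choice ensures that $K_h$ reduces to a smooth hypersurface modulo $p$, so each $V_t$ is a $t$-fold fibre product of the smooth affine surface $V_1 = K_h^{\mathrm{aff}}$ over its projection to the $(x,y)$-plane, and has dimension $2$.

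For the first bound, if $p \mid M$ and $p \mid N$ then the character is trivial and the sum reduces to $\#V_t(\mathbb{F}_p) = O(p^2)$ by Lemma~\ref{lemma:deligne} applied to the projective closure, matching $p(p,M,N) = p^2$. Otherwise $(p,M,N)=1$, and I would apply Hooley's method of moments (Lemma~\ref{lemma:HMOM}) to the linear form $F(x,y,z_1,\ldots,z_t) := Mx + Ny$. Because $F$ is non-constant on the smooth variety $V_t$, only $O(1)$ values of $\tau \in \mathbb{F}_{p^j}$ give critical fibres, and for the remaining $\tau$ the slice $V_t \cap \{F = \tau\}$ is of dimension $1$. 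A component-wise application of Weil (Lemma~\ref{lemma:weil}) then yields $N_j(\tau) = b_t p^j + O(p^{j/2})$ for generic $\tau$, where $b_t$ is the number of geometric components of $V_t$, while Lemma~\ref{lemma:deligne} gives $\sum_\tau N_j(\tau) = b_t p^{2j} + O(p^{(3/2)j})$ and hence $N_j = b_t p^j + O(p^{j/2})$. Combining these, the variance $\sum_\tau |N_j(\tau) - N_j|^2$ is $\ll p^{2j}$, and Lemma~\ref{lemma:HMOM} yields $\Sigma_t(p;M,N) \ll p$.

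For the second bound, I would compute $\#V_t(\mathbb{F}_p)$ directly for $t\in\{0,1,2\}$. The cases $t=0$ (trivially $V_0=\mathbb{A}^2$) and $t=1$ ($V_1$ smooth of dimension $2$) immediately give $p^2$ and $p^2+O(p)$ respectively. For $t=2$, I decompose $V_2 = \Delta \cup V_2^{\neq}$, where $\Delta := \{z_1 = z_2\} \cong V_1$ and $V_2^{\neq}$ is the closure of the $\{z_1 \neq z_2\}$ part. The difference-quotient identity
$$\frac{K_h(x,y,z_1,1) - K_h(x,y,z_2,1)}{z_1 - z_2} = -P_h(z_1,z_2,1)$$
identifies $V_2^{\neq}$ as the subvariety of $\mathbb{A}^4$ cut out by $K_h(x,y,z_1,1) = 0$ and $P_h(z_1,z_2,1) = 0$. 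The smoothness of $P_h$ modulo $p$, built into $\mathcal{P}$ via $\mathrm{Disc}[P_h]$, combined with the smoothness of $K_h$, makes $V_2^{\neq}$ a smooth complete intersection of dimension $2$ away from a locus of dimension $\leq 1$; a further application of Lemma~\ref{lemma:deligne} then gives $\#V_2^{\neq}(\mathbb{F}_p) = p^2 + O(p)$. Since $\Delta \cap V_2^{\neq}$ has dimension $1$ and so contributes only $O(p)$, we conclude $\#V_2(\mathbb{F}_p) = 2p^2 + O(p)$.

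The main obstacle lies in the geometric regularity required for the Hooley computation when $t \geq 3$: one must show that $V_t$ itself decomposes into a bounded number of absolutely irreducible pieces of dimension $2$ (reflecting the orbits of the monodromy group on $t$-tuples of roots of $K_h$ in the third variable) and that the slices $V_t \cap \{F=\tau\}$ have a controlled component structure for all but an $O(1)$-sized exceptional set of $\tau$, uniformly in $j$. Both facts should follow from the discriminant conditions encoded in $\mathcal{P}$, but making them rigorous is the delicate step.
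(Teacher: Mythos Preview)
Your treatment of $t\in\{0,1,2\}$ matches the paper's approach closely. For $t=1$ the paper verifies in detail that the slice $K_h(x,(\tau-Mx)N^{-1},z,1)=0$ is absolutely irreducible for all but $O(1)$ values of $\tau$, by locating the singular points of its projective closure and checking that a certain discriminant polynomial in $\tau$ does not vanish identically (via Lemma~\ref{lemma:simpledisc}); your appeal to ``only $O(1)$ critical fibres'' elides this verification, but the underlying mechanism is the same. Likewise for $t=2$ the paper shows that the projective closure of $\{K_h=P_h=0\}\subset\mathbb{P}^4$ is a \emph{smooth} complete intersection (not merely generically smooth) before invoking Deligne, and then repeats the slice analysis; your decomposition $V_2=\Delta\cup V_2^{\neq}$ is exactly right.

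The genuine gap is your handling of $t\in\{3,4\}$. The paper does \emph{not} attempt a component-by-component analysis of $V_t$ or its hyperplane sections, and for good reason: controlling the monodromy decomposition of $V_3,V_4$ and the irreducibility of their generic slices uniformly over $\mathbb{F}_{p^j}$ would be harder than anything else in the section, and you leave it as an acknowledged obstacle rather than an argument. Instead the paper exploits that this section only treats $d\in\{3,4\}$, so $P_h$ has degree $d-2\in\{1,2\}$. Splitting the sum over $(z_1,\ldots,z_t)$ according to the number $\sigma(\mathbf z)$ of distinct coordinates, the pairwise constraints $P_h(z_i,z_j,1)=0$ force $\sigma(\mathbf z)\le 2$ when $d=3$ (since $P_h(x,y,1)=P_h(x,z,1)$ implies $y=z$), and $\sigma(\mathbf z)\le 3$ when $d=4$ (with any third value determined linearly by the other two). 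In either case the exponential sum over tuples with a given $\sigma$ is, up to $O(p)$, one of $\Sigma_1$, $\Sigma_2-\Sigma_1$, so $\Sigma_3$ and $\Sigma_4$ collapse to explicit linear combinations of $\Sigma_1$ and $\Sigma_2$. The bound $\Sigma_t\ll p(p,M,N)$ then follows immediately from the cases already established. You should replace the general geometric programme for $t\ge 3$ with this elementary reduction.
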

We will prove this result in stages. The case $t=0$ follows immediately from orthogonality of additive characters. Thus we turn our attention to the case $t=1,$ where
\begin{equation}\label{eq:sigma1}
\Sigma_1(p;M,N)  = \sum_{\substack{x,y,z\,\,(\text{mod}\,\,p) \\ K_h(x,y,z,1)=0}}e\bigg(\frac{Mx+Ny}{p}\bigg).
\end{equation}
We begin by proving the following lemma.
\begin{lemma}\label{lemma:simpledisc}
For any integers $N$ and $M$ we have 
\begin{equation}
\mathrm{Disc}_x [f_d(Nx,1-Mx)] = N^D \mathrm{Disc}_x [f_d(x,1)]
\end{equation}
for some integer $D$ depending on $f_d,M$ and $N$.
\end{lemma}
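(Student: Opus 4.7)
The plan is to compute the discriminant explicitly by factorising $f_d$ over $\overline{\mathbb{Q}}$ and exploiting homogeneity, then to upgrade the resulting identity from a dense open subset of $\mathbb{A}^2_{M,N}$ to all integer pairs $(M,N)$ by a polynomial identity argument.

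Concretely, I would begin by writing $f_d(x,1) = a_d\prod_{i=1}^{d}(x-\alpha_i)$ over $\overline{\mathbb{Q}}$; the roots $\alpha_i$ are distinct by the assumption that $f_d$ is square-free. Then, by homogeneity of $f_d$,
\begin{equation*}
f_d(Nx,1-Mx) = a_d\prod_{i=1}^{d}\big((N+\alpha_iM)x-\alpha_i\big).
\end{equation*}
In the generic regime where $N+\alpha_iM\neq 0$ for every $i$ (equivalently, $f_d(-N,M)\neq 0$), this polynomial has degree exactly $d$ in $x$, with leading coefficient $a_d\prod_i(N+\alpha_iM)$ and roots $\alpha_i/(N+\alpha_iM)$.

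The crucial step is the algebraic identity
\begin{equation*}
\frac{\alpha_i}{N+\alpha_iM}-\frac{\alpha_j}{N+\alpha_jM} = \frac{N(\alpha_i-\alpha_j)}{(N+\alpha_iM)(N+\alpha_jM)},
\end{equation*}
which extracts exactly one factor of $N$ per root-difference. Substituting the leading coefficient and roots into the product formula for the discriminant from Section~\ref{section:resultants} and taking the product over ordered pairs $i\neq j$, the $2(d-1)$-th power of the leading coefficient produces $\prod_i(N+\alpha_iM)^{2(d-1)}$; this cancels exactly with the denominator $\prod_{i\neq j}(N+\alpha_iM)(N+\alpha_jM) = \prod_i(N+\alpha_iM)^{2(d-1)}$ arising from the root-differences. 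What survives is precisely $N^{d(d-1)}\mathrm{Disc}_x[f_d(x,1)]$, so $D = d(d-1)$ in this generic case.

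To extend to all integer pairs $(M,N)$, I would invoke property (i) of the discriminant: both sides of the claimed identity are polynomials in $M$ and $N$ (with coefficients determined by $f_d$), since the coefficients of $f_d(Nx,1-Mx)$ are themselves polynomial in $M$ and $N$. As the identity holds on the Zariski-dense subset $\{f_d(-N,M)\neq 0\}\subset \mathbb{A}^2_{M,N}$, it is in fact a polynomial identity, valid for every $(M,N)$, with $D = d(d-1)$. The only real obstacle is the careful bookkeeping of the cancellation of $(N+\alpha_iM)$-factors in the discriminant formula, but once the product identity above is set up this is routine. The statement of the lemma allows $D$ to depend on $M$ and $N$ to give extra flexibility, but in fact the uniform choice $D = d(d-1)$ suffices.
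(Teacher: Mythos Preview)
Your approach is correct and genuinely different from the paper's. The paper proceeds via the transformation and reciprocal identities for discriminants: it first pulls out $N^{l(l-1)}$ using property (iii), then shows that $\mathrm{Disc}_x[f_d(x,1-tx)]$ is independent of $t$ by writing $x^d f_d(1/x,1/x-t)=f_d(1,x-t)$ and invoking the reciprocal formula $\mathrm{Disc}_x[F(x)]=\mathrm{Disc}_x[x^dF(1/x)]$, with a separate case analysis according to whether $f_d(0,1)$ vanishes. Your route is more direct: you factor $f_d$ over $\overline{\mathbb{Q}}$, observe the key identity $\beta_i-\beta_j=N(\alpha_i-\alpha_j)/[(N+\alpha_iM)(N+\alpha_jM)]$ for the transformed roots, and watch the $(N+\alpha_iM)$-factors cancel against the leading-coefficient contribution. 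This has the advantage of making the exponent $D=d(d-1)$ explicit and uniform, whereas the paper leaves $D$ dependent on the (possibly dropped) degree.

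One small point to tighten: your opening line $f_d(x,1)=a_d\prod_{i=1}^{d}(x-\alpha_i)$ presupposes that $f_d(x,1)$ has degree exactly $d$, i.e.\ that the coefficient of $x^d$ in $f_d$ is nonzero. The square-free hypothesis on $f_d$ permits $y\mid f_d(x,y)$, in which case $f_d(x,1)$ has degree $d-1$ and your factorization (and hence the root-difference computation) does not apply as written. The fix is painless: either treat this case separately by tracking the extra factor $(1-Mx)$ in $f_d(Nx,1-Mx)$, or---more in the spirit of your final paragraph---note that both sides of the identity are polynomial in the coefficients of $f_d$ as well as in $(M,N)$, so the Zariski-density argument extends over the coefficient space and covers every square-free $f_d$ at once.
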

\begin{proof}
The result is true if $N=0$ as then both sides equal zero. Thus we may suppose that $N\neq 0.$ We have
$$\mathrm{Disc}_x [f_d(Nx,1-Mx)]  = N^{D}\mathrm{Disc}_x [f_d(x,1-Mx/N)]$$
where $D:=l(l-1)$ and $l$ is the degree of $f_d(x,1-Mx/N)$ as a polynomial in $x$. To prove the lemma, it suffices to show that the discriminant $\mathrm{Disc}_x (f_d(x,1-tx)),$ which by definition is a polynomial in $t$, is in fact constant in $t$. To do this, we require two further properties of discriminant polynomials which we state here. Namely, for any polynomial $F$ we have 
$$\mathrm{Disc}_x(xF(x)) =  F(0)^2\mathrm{Disc}_x(F(x))$$
and, if in addition $F$ has degree $d$ and $F(0)\neq 0,$ we have 
$$\mathrm{Disc}_x(F(x)) = \mathrm{Disc}_x(x^d F(1/x)).$$
We split into two cases.
\begin{enumerate}
	\item Suppose $f_d(0,1)\neq 0.$ In this case, provided $t$ is such that $f_d(1,-t)\neq 0,$ we have 
\begin{align*}
\mathrm{Disc}_x (f_d(x,1-tx)) &= \mathrm{Disc}_x (x^d f_d(1,1/x-t)) \\
&=  \mathrm{Disc}_x (f_d(1,x-t)) \\
&=\mathrm{Disc}_x ( f_d(1,x)).
\end{align*}
As both sides are polynomials in $t$, we conclude this identity in fact holds for all $t.$ The result follows.
	\item Suppose $f_d(0,1)=0.$ Then we must have $(\partial f_d/\partial x)(0,1)\neq 0$ as otherwise $f_d(x,y)$ would be divisible by $x^2.$ Again supposing $t$ is such that $f_d(1,-t)\neq 0,$ we have 
\begin{align*}
\mathrm{Disc}_x (f_d(x,1-tx)) &= \mathrm{Disc}_x (x^{d} f_d(1,1/x-t)) \\
&= \bigg[\lim_{x\rightarrow 0} \frac{f_d(x,1-xt)}{x}\bigg]^2 \mathrm{Disc}_x (x^{d-1}f_d(1,1/x-t)) \\
&= \bigg[\frac{\partial f_d}{\partial x}(0,1)\bigg]^2\mathrm{Disc}_x (f_d(1,x))
\end{align*}
The result follows, as above.
\end{enumerate}
\end{proof}

We isolate the $t=1$ case of Proposition~\ref{lemma:browningexp} in the following lemma.
\begin{lemma}\label{lemma:t1case}
Let $p\in \mathcal{P}$. Then
$$
\Sigma_1(p;M,N)
= 
\begin{cases}
p^2+O(p)\,\,&\text{if $p|(M,N),$} \\
O(p)\,\,&\text{otherwise.}
\end{cases}
$$
\end{lemma}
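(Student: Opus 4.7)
The statement has two parts, which I would treat separately.

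In the case $p \mid (M,N)$, the exponential factor is trivial, so the sum reduces to counting $\mathbb{F}_p$-points on the affine surface $\{K_h(\cdot,\cdot,\cdot,1)=0\} \subset \mathbb{A}^3_{\mathbb{F}_p}$. By construction of $\mathcal{P}$, the projective closure $K_h \subset \mathbb{P}^3_{\mathbb{F}_p}$ is a smooth surface of degree $d$, so Lemma~\ref{lemma:deligne} gives $p^2 + O(p)$ projective points. The points at infinity lie on $\{(2ab)^{d-1}f_d(x,y)=0\} \subset \mathbb{P}^2_{\mathbb{F}_p}$ with coordinates $[x:y:z]$; since $p \nmid \mathrm{Disc}[f_d]$, this splits into $d$ distinct lines all through $[0:0:1]$, contributing $dp + O(1)$ points. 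Subtracting gives the required asymptotic.

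For the case $p \nmid (M,N)$, I would apply Hooley's method of moments (Lemma~\ref{lemma:HMOM}), parametrising by $\tau = Mx + Ny$. WLOG $p \nmid N$, so substituting $y = N^{-1}(\tau - Mx)$ shows that
$$N_j(\tau) := \#\{(x,y,z) \in \mathbb{F}_{p^j}^3 : K_h(x,y,z,1)=0, \ Mx+Ny = \tau\}$$
equals the number of $\mathbb{F}_{p^j}$-points on the affine plane curve
$$C_\tau \,:\ K_h\!\left(x,\ N^{-1}(\tau - Mx),\ z,\ 1\right) = 0,$$
which is the affine trace of the hyperplane section of $K_h$ by the plane $\{Mx + Ny = \tau w\} \subset \mathbb{P}^3$.

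The crux is to show $C_\tau$ is absolutely irreducible of degree $d$ for all but $O(1)$ values of $\tau \in \mathbb{F}_{p^j}$. Since $K_h$ is smooth and geometrically irreducible, a Bertini-type argument shows that the generic member of the pencil $\{Mx+Ny=\tau w\}_\tau$ cuts $K_h$ in an absolutely irreducible plane curve of degree $d$; the exceptional values of $\tau$ form the intersection of this 1-parameter pencil with the discriminant locus in $(\mathbb{P}^3)^*$, a proper closed subvariety, hence form a finite set of size $O(1)$. Lemma~\ref{lemma:simpledisc} enters here by ensuring that the top-degree behaviour at infinity is non-degenerate whenever $p \nmid N\cdot\mathrm{Disc}[f_d]$, so that the pencil is not contained in the discriminant locus. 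For good $\tau$, Lemma~\ref{lemma:weil} yields $|C_\tau(\mathbb{F}_{p^j})| = p^j + O(p^{j/2})$; for the $O(1)$ exceptional $\tau$, the trivial curve-point bound $N_j(\tau) \ll p^j$ suffices. Hence
$$\sum_{\tau \in \mathbb{F}_{p^j}} |N_j(\tau) - p^j|^2 \ll p^{2j},$$
and applying Lemma~\ref{lemma:HMOM} with $\kappa = 2$ produces $\Sigma_1(p;M,N) \ll p$, as desired.

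The main obstacle is the absolute irreducibility step. The pencil $\{Mx + Ny = \tau w\}_\tau$ has the common base line $\{Mx+Ny=0,\ w=0\}$, whose intersection with $K_h$ is generically concentrated at the single point $[0:0:1:0]$ with multiplicity $d$; every member $C_\tau$ therefore shares this point. Confirming that this shared base intersection does not force reducibility, and pinning down the $O(1)$ exceptional $\tau$ precisely, is where Lemma~\ref{lemma:simpledisc} together with the assumption that $\mathrm{Disc}[K_h]$ and $\mathrm{Disc}[f_d]$ are non-zero modulo $p$ is the essential input.
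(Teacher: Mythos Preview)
Your proposal is correct and follows the same approach as the paper: Deligne's estimate for the case $p \mid (M,N)$, and Hooley's method of moments combined with Weil's curve bound for $p \nmid (M,N)$. The only difference is one of presentation: where you invoke a Bertini-type argument to conclude that all but $O(1)$ of the sections $C_\tau$ are absolutely irreducible, the paper establishes this by an explicit singularity analysis of the projectivization of $C_\tau$ (case-splitting on whether $f_d(N,-M)=0$ in $\mathbb{F}_p$), using Lemma~\ref{lemma:simpledisc} exactly as you anticipate to verify that the relevant discriminant, viewed as a polynomial in $\tau$, has nonzero leading coefficient.
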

\begin{proof}
First we consider the case $p|(M,N).$ We may write
\begin{equation*}
\Sigma_1(p;0,0)  = \frac{1}{p-1}\#\{x,y,z,w\in \mathbb{F}_p: K_h(x,y,z,w)=0\text{ and } w\neq0\}.
\end{equation*}
Our set-up ensures that $K_h(x,y,z,w)$ is a non-singular projective surface over $\mathbb{F}_p.$ The hypotheses of Lemma~\ref{lemma:deligne} are satisfied, and so we obtain
\begin{equation*}
\#\{x,y,z,w\in \mathbb{F}_p: K_h(x,y,z,w)=0\} = p^3+O(p^2).
\end{equation*}
We have $K_h(x,y,z,0) = (2ab)^{d-1}f_d(x,y).$ Assuming $p\in \mathcal{P},$ it follows that
\begin{equation*}
\#\{x,y,z\in \mathbb{F}_p: K_h(x,y,z,0)=0\} \ll p^2.
\end{equation*}
Putting these facts together yields
\begin{equation*}
\Sigma_1(p;0,0)  = p^2+O(p).
\end{equation*}
Now let us suppose $p\nmid (M,N).$ We will use Lemma~\ref{lemma:HMOM} to show that (generically) we have square-root cancellation. To this end, fix an integer $j\geq 1$ and $\tau \in \mathbb{F}_{p^{j}},$ and define
\begin{equation*}
N_{j}(\tau)  := \#\{x,y,z \in \mathbb{F}_{p^{j}}: K_h(x,y,z,1) = 0 \text{ and } Mx+Ny= \tau\}.
\end{equation*}
We may suppose WLOG that $p\nmid N,$ with the case $p\nmid M$ being treated similarly. 
We may rewrite the above as
\begin{equation*}
N_{j}(\tau) = \#\{x,z \in \mathbb{F}_{p^{j}}: K_h(x,(\tau-Mx)N^{-1},z,1) = 0\}.
\end{equation*}
We claim the following.
\\ \\
{\it Claim.} $K_h(x,(\tau-Mx)N^{-1},z,1) =0$ defines an absolutely irreducible curve for all but at most $O(1)$ values of $\tau$.
\begin{proof}[Proof (of claim)]
To examine whether the curve $K_h(x,(\tau-Mx)N^{-1},z,1) = 0$ is absolutely irreducible, we investigate the smoothness properties of its projectivisation over $\overline{\mathbb{F}}_p$. By Taylor expansion (see e.g.~(\ref{eq:taylorexpansion})), and using the fact $p\nmid N,$ we may write 
\begin{align*}
f(x,(\tau-Mx)N^{-1}) &=\frac{f_d(N,-M)}{N^d}x^d \\
&+ \bigg[\frac{f_{d-1}(N,-M) }{N^{d-1}}- \tau\frac{(\partial f_d/\partial y)(N,-M)}{N^d} \bigg]x^{d-1}+\ldots.
\end{align*}
We also have 
\begin{equation*}
(2ab)^{d-1}hg\bigg(\frac{z+h}{2a},\frac{z-h}{2b}\bigg) = g_{d-1}(b,a)hz^{d-1}+\ldots.
\end{equation*}
There are two cases to consider. 
\begin{enumerate}
	\item Suppose $f_d(N,-M)=0$ in $\overline{\mathbb{F}}_{p}.$ In this case, since $f_d$ is smooth and $N\neq 0$ we have $(\partial f_d/\partial y)(N,-M)\neq 0.$ We exclude the value $\tau = Nf_{d-1}(N,-M)/(\partial f_d/\partial y)(N,-M),$ as we may, and then we see the projectivisation of this curve can be written as
	\begin{align*}
	\bigg[\frac{f_{d-1}(N,-M) }{N^{d-1}}- \tau\frac{(\partial f_d/\partial y)(N,-M)}{N^d} \bigg]x^{d-1} &=g_{d-1}(b,a)hz^{d-1} + (\text{terms involving $w$}).
	\end{align*} 
	It is then clear, by considering vanishing of partial derivatives, that there cannot be any singular points with $w=0.$ This leaves us to investigate singular points of the form $[r:s:1].$ By considering the $z$-derivative, we see that $s$ is constrained to at most $O(1)$ values. For each such value, there will be a solution in $r$ if and only if the discriminant
	\begin{equation}\label{eq:discforKh}
	\mathrm{Disc}_x\bigg(f(x,(\tau-Mx)N^{-1}) - (2ab)^{d-1}hg\bigg(\frac{s+h}{2a},\frac{s-h}{2b}\bigg) -(2ab)^{d-1}k\bigg)
	\end{equation}
	vanishes. This will be a polynomial in $\tau$ which generically is non-zero, and so will only vanish for $O(1)$ values of $\tau.$ As previously, we will prove this by extracting the leading coefficient and showing this is non-zero. We have 
\begin{align*}
\frac{f(\tau x, (\tau-M\tau x)N^{-1}}{\tau^d} = f_d(x,(1-Mx)N^{-1})+O(\tau^{-1}).
\end{align*}
By similar arguments to the proof of Lemma~\ref{lemma:singvals}, it follows that the leading coefficient of the dscriminant~(\ref{eq:discforKh}), as a polynomial in $\tau,$ is 
\begin{equation*}
\mathrm{Disc}_x( f_d(x,(1-Mx)N^{-1}).
\end{equation*}
By Lemma~\ref{lemma:simpledisc} and our assumptions on $p$, this doesn't vanish.
	\item Suppose $f_d(N,-M)\neq 0$ in $\overline{\mathbb{F}}_{p}.$ Then the projectivised curve can be written as 
	\begin{align*}
	\frac{f_d(N,-M)}{N^d} x^{d} &=g_{d-1}(b,a)hz^{d-1}w + (\text{terms involving $w$}).
	\end{align*} 
	In exactly the same way as above, we conclude that there are no singular points with $w=0$ and for any singular point of the form $[r:s:1],$ $s$ is constrained to at most $O(1)$ values and for each such value there exists an $r$ if and only if the discriminant defined by~(\ref{eq:discforKh}) vanishes. This can only occur for at most $O(1)$ values of $\tau.$
\end{enumerate}
This finishes the proof of the claim.
\end{proof}
Now, for the $O(1)$ values of $\tau$ for which the curve $K_h(x,(\tau-Mx)N^{-1},z,1) =0$ is not absolutely irreducible, we will apply the trivial bound $N_{j}(\tau) \ll p^{j}.$ In the complementary case, by Lemma~\ref{lemma:weil}, we obtain 
\begin{equation*}
N_{j}(\tau) = p^{j} + O(p^{j/2})
\end{equation*}
We may therefore take $N_{j} = p^{j}$ and $\kappa=2$ in the statement of Lemma~\ref{lemma:HMOM}, and the result follows.
\end{proof}

For the case $t=2,$ we must examine the exponential sum
\begin{align}
\Sigma_2(p;M,N)&=\sum_{\substack{x,y,z_1,z_2\,\,(\text{mod}\,\,p) \\ K_h(x,y,z_1,1)=0 \\ K_h(x,y,z_2,1)=0}}e\bigg(\frac{Mx+Ny}{p}\bigg).
\end{align}
Recalling the definition of $P_h$ in~(\ref{eq:Phdef}), we see that 
\begin{equation}
K_h(x,y,z_1,1)= K_h(x,y,z_2,1) \iff (z_1-z_2)P_h(z_1,z_2,1)=0.
\end{equation}
Thus we may equivalently write 
\begin{align}
\Sigma_2(p;M,N) &=\sum_{\substack{x,y,z_1,z_2\,\,(\text{mod}\,\,p) \\ K_h(x,y,z_1,1)=0 \\ (z_1-z_2)P_h(z_1,z_2,1)=0}}e\bigg(\frac{Mx+Ny}{p}\bigg).
\end{align}
We isolate the $t=2$ case of Proposition~\ref{lemma:browningexp} in the following lemma.
\begin{lemma}
Let $p\in \mathcal{P}.$ Then 
$$
\Sigma_2(p;M,N)
= 
\begin{cases}
2p^2+O(p)\,\,&\text{if $p|(M,N),$} \\
O(p)\,\,&\text{otherwise.}
\end{cases}
$$
\end{lemma}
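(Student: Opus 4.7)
The plan is to mirror the approach used for $\Sigma_1$ in Lemma~\ref{lemma:t1case}. I would begin by decomposing $\Sigma_2 = \Sigma_1 + T - S$, where
\begin{equation*}
T = \sum_{\substack{x,y,z_1,z_2 \in \mathbb{F}_p \\ K_h(x,y,z_1,1)=0 \\ P_h(z_1,z_2,1)=0}} e\!\left(\frac{Mx+Ny}{p}\right), \qquad S = \sum_{\substack{x,y,z \in \mathbb{F}_p \\ K_h(x,y,z,1)=0 \\ P_h(z,z,1)=0}} e\!\left(\frac{Mx+Ny}{p}\right).
\end{equation*}
This rests on the identity $K_h(x,y,z_1,1)-K_h(x,y,z_2,1) = -(z_1-z_2)P_h(z_1,z_2,1)$: given $K_h(x,y,z_1,1)=0$, the point $(x,y,z_2)$ also lies on $K_h=0$ exactly when $(z_1-z_2)P_h(z_1,z_2,1)=0$; inclusion--exclusion over the diagonal $z_1=z_2$ (which contributes $\Sigma_1$) then yields the decomposition. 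The bound $S \ll p$ is immediate: by~(\ref{eq:Phdef3}), $P_h(z,z,1)$ is a polynomial in $z$ of degree $d-2$ with leading coefficient $(d-1)g_{d-1}(b,a)h$, a unit modulo any $p\in\mathcal{P}$, hence has $O(1)$ roots; for each such $z$, the plane curve $K_h(x,y,z,1)=0$ has degree $d$ in $(x,y)$ and so supports $O(p)$ points.

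For $T$ when $p\mid(M,N)$, the exponential is trivial and $T=\#V(\mathbb{F}_p)$ for the affine variety $V=\{K_h(x,y,z_1,1)=0,\ P_h(z_1,z_2,1)=0\}\subset\mathbb{A}^4$. I would projectivise by introducing a homogenising variable $w$ to obtain a variety $\bar V\subset\mathbb{P}^4$ cut out by two hypersurfaces of degrees $d$ and $d-2$, where $K_h$ does not involve $z_2$ and $P_h$ does not involve $x,y$. Provided $\bar V$ is a smooth complete intersection of dimension $2$, Deligne's theorem (Lemma~\ref{lemma:deligne}) gives $\#V = p^2 + O(p)$.

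For $T$ when $p\nmid(M,N)$, I would apply Hooley's method of moments (Lemma~\ref{lemma:HMOM}). Without loss of generality $p\nmid N$; eliminate $y=(\tau-Mx)N^{-1}$ and set $K_h^{\tau}(x,z_1) := K_h(x,(\tau-Mx)N^{-1},z_1,1)$, so that
\begin{equation*}
N_j(\tau) = \#\{(x,z_1,z_2)\in\mathbb{F}_{p^j}^3 : K_h^{\tau}(x,z_1)=0,\ P_h(z_1,z_2,1)=0\}.
\end{equation*}
The proof of Lemma~\ref{lemma:t1case} already showed that $\{K_h^{\tau}=0\}\subset\mathbb{A}^2$ is absolutely irreducible of degree $d$ for all but $O(1)$ values of $\tau$. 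Combined with the smooth plane curve $\{P_h=0\}\subset\mathbb{A}^2$ (absolutely irreducible by smoothness) and the fact that $P_h(z_1,\cdot,1)$ has degree $d-2$ in $z_2$, the variety defining $N_j(\tau)$ should, for generic $\tau$, be a pure one-dimensional curve with at most $d-2$ absolutely irreducible components, each contributing $p^j+O(p^{j/2})$ points by Weil (Lemma~\ref{lemma:weil}). Taking $N_j=(d-2)p^j$ and using the trivial bound $N_j(\tau)\ll p^j$ on the exceptional $\tau$ then gives $\sum_{\tau}|N_j(\tau)-N_j|^2 \ll p^{2j}$, so $T \ll p$ by Lemma~\ref{lemma:HMOM}.

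The main obstacle I anticipate is the geometric verification needed to apply Deligne and Weil: that $\bar V\subset\mathbb{P}^4$ is a smooth complete intersection, and that $\{K_h^\tau=0,\ P_h=0\}\subset\mathbb{A}^3$ has $O(1)$ absolutely irreducible components of pure dimension $1$ for generic $\tau$. Both should follow the pattern of Lemmas~\ref{lemma:singvals} and~\ref{prop:phsmooth}: for $\bar V$ one exploits the block structure of the Jacobian (since $K_h$ is independent of $z_2$ and $P_h$ of $x,y$) together with smoothness of $K_h$ and $P_h$ modulo $p$; for the auxiliary curve one extracts the leading coefficient of an appropriate discriminant polynomial in $\tau$ and uses the square-freeness hypotheses (2)--(4) of Theorem~\ref{theo:generalmain} together with $p\in\mathcal{P}$ to ensure non-vanishing.
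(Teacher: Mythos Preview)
Your overall strategy matches the paper's: the decomposition $\Sigma_2=\Sigma_1+T+O(p)$, Deligne for $T(p;0,0)$ via smoothness of the complete intersection $\bar V=\{K_h=P_h=0\}\subset\mathbb{P}^4$, and Hooley's method for $p\nmid(M,N)$. Your sketch of the $\bar V$ smoothness argument is on the right track, though the paper needs more than the block structure: when $\lambda\mu\neq 0$ one must exploit the symmetry of $P_h(z_1,z_2,w)$ in its first two arguments to force $\partial P_h/\partial z_1=\partial P_h/\partial z_2=0$ and then derive a contradiction with smoothness of $P_h$.

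There is, however, a genuine gap in your treatment of the case $p\nmid(M,N)$. You propose to take $N_j=(d-2)p^j$, reasoning that the fibre $V_\tau=\{K_h^\tau=0,\ P_h=0\}\subset\mathbb{A}^3$ has ``at most $d-2$ absolutely irreducible components, each contributing $p^j+O(p^{j/2})$''. This is not enough. First, Lemma~\ref{lemma:weil} applies to a single absolutely irreducible curve; components of a reducible curve over $\mathbb{F}_p$ need not be individually defined over $\mathbb{F}_{p^j}$, so the contribution of a Frobenius-nontrivial component is $O(1)$, not $p^j+O(p^{j/2})$. Second, and more seriously, the paper actually shows that $V_\tau$ is \emph{absolutely irreducible} for all but $O(1)$ values of $\tau$ (by proving its projectivisation in $\mathbb{P}^3$ is a smooth complete intersection, hence connected). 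Thus $N_j(\tau)=p^j+O(p^{j/2})$, not $(d-2)p^j+O(p^{j/2})$. In the case $d=4$ your choice $N_j=2p^j$ would give $|N_j(\tau)-N_j|\asymp p^j$ for every generic $\tau$, so $\sum_\tau|N_j(\tau)-N_j|^2\gg p^{3j}$ and Hooley's lemma yields only $T\ll p^{3/2}$, which is insufficient.

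The fix is to carry out the same smoothness analysis for the projectivised curve $\{K_h^\tau=P_h=0\}\subset\mathbb{P}^3$ that you already anticipate for $\bar V$: the paper does this in its second Claim, again using the symmetry of $P_h$ and the discriminant argument from Lemma~\ref{lemma:t1case}. Once irreducibility is established, one takes $N_j=p^j$ and $\kappa=2$, exactly as for $\Sigma_1$.
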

\begin{proof}
Separating out the contribution from $z_1=z_2,$ and recalling the definition of $\Sigma_1$ (see~(\ref{eq:sigma1}) above), we may write 
\begin{align}\label{eq:sigma12}
\Sigma_2(p;M,N) &=\Sigma_1(p;M,N)+\sum_{\substack{x,y,z_1,z_2\,\,(\text{mod}\,\,p) \\ z_1\neq z_2 \\ K_h(x,y,z_1,1)=0 \\ P_h(z_1,z_2,1)=0}}e\bigg(\frac{Mx+Ny}{p}\bigg).
\end{align}
In this last sum we may add back in the terms with $z_1=z_2,$ as these contribute
\begin{align} \label{eq:sigma12next}
\leq \sum_{\substack{z\,\,(\text{mod}\,\,p) \\ P_h(z,z,1)=0}}\sum_{\substack{x,y\,\,(\text{mod}\,\,p) \\ K_h(x,y,z,1)=0}}1 \ll p \sum_{\substack{z\,\,(\text{mod}\,\,p) \\ P_h(z,z,1)=0}}1 \ll p.
\end{align}
Overall, we obtain 
\begin{align*}
\Sigma_2 &= \Sigma_1+\sum_{\substack{x,y,z_1,z_2\,\,(\text{mod}\,\,p) \\ K_h(x,y,z_1,1)=0 \\ P_h(z_1,z_2,1)=0}}e\bigg(\frac{Mx+Ny}{p}\bigg) +O(p).
\end{align*} 
Let us call this inner sum $T(p;M,N).$ The following claim is important.
\\ \\
{\it Claim.} For any $p\in \mathcal{P}$ the variety 
\begin{equation}
\{K_h(x,y,z_1,w) = P_h(z_1,z_2,w) = 0 \} \subset \mathbb{P}_{\overline{\mathbb{F}}_p}^{4}
\end{equation}
is smooth.
\begin{proof}[Proof (of claim)]
Any singular point by definition must solve the system
\begin{align*}
K_h = P_h &= 0,\,\,\lambda \nabla K_h = \mu \nabla P_h
\end{align*}
for some $(\lambda,\mu)\neq (0,0).$ By the work above we know that both $K_h$ and $P_h$ are non-singular over $\mathbb{Q}.$ It follows that we must have $\lambda \mu \neq 0.$ Now, the gradient identity implies the following equations must be solved:
\begin{align*}
\frac{\partial{K_h}}{\partial{x}} &= 0, \\
\frac{\partial{K_h}}{\partial{y}} &= 0, \\
\lambda \frac{\partial{K_h}}{\partial{z_1}} &= \mu\frac{\partial{P_h}}{\partial{z_1}},   \\
\frac{\partial{P_h}}{\partial{z_2}}  &= 0, \\
\lambda \frac{\partial{K_h}}{\partial{w}} &= \mu\frac{\partial{P_h}}{\partial{w}}.
\end{align*}
We first rule out the possibility of any singular points with $w=0$. If $[r:s:t:u:0]$ is a singular point, then the first two equations imply that 
$$ \frac{\partial f_d}{\partial x}(r,s) = \frac{\partial f_d}{\partial y}(r,s) =0.$$
By Euler's identity we conclude that $f_d(r,s)=0.$ As we are assuming $f_d$ is smooth over $\mathbb{F}_p,$ we must therefore have $r=s=0.$ Now $\partial{K_h}/\partial{z_1}$ vanishes when $w=0.$ Recalling the definition of $P_h$ (see~(\ref{eq:Phdef3})), we have 
$$P_h(x,y,w) = g_{d-1}(b,a)h (x^{d-2}+x^{d-3}y+\ldots+x y^{d-3} +y^{d-2})+\ldots.$$
If $d=3$ then the third equation (say) cannot be satisfied, and so there are no singular points in this case. Thus we may suppose that $d=4.$ In this case the third and fourth equation imply that 
\begin{align*}
2t+u &= 0 \\
t+2u &= 0,
\end{align*}
whence we must have $t=u=0.$ This is a contradiction, and so there cannot be any singular points with $w=0.$ We now consider singular points of the form $[r:s:t:u:1].$ The fourth equation implies that $(\partial{P_h}/\partial{z_2})(t,u)=0.$ Likewise, on replacing $K(x,y,z_1,w)$ by $K(x,y,z_2,w)$ and using the symmetry of $P_h(z_1,z_2,w)$ in the first two variables, we see that $(\partial{P_h}/\partial{z_1})(r,s)=0$ also. Since we are assuming $P_h(r,s,1)=0,$ by Euler's identity we see that $(\partial{P_h}/\partial{w})(r,s)=0$. But now we have produced a singular point on the curve $P_h,$ a contradiction.
\end{proof}

With this claim proven, we move on to estimating the exponential sum $T(p;M,N)$ defined above. We begin by considering the case $p|(M,N).$ By definition,
\begin{equation*}
T(p;0,0) = \#\{x,y,z_1,z_2 \in \mathbb{F}_p:  K_h(x,y,z_1,1) = P_h(z_1,z_2,1)=0 \},
\end{equation*}
which we write as
\begin{equation*}
T(p;0,0) =\frac{1}{p-1} \#\{x,y,z_1,z_2,w \in \mathbb{F}_p:  K_h(x,y,z_1,w)=P_h(z_1,z_2,w)=0 \text{ and } w\neq 0\}.
\end{equation*}
This set defines a non-singular, complete intersection, projective variety of dimension 2. Thus, we may apply Lemma~\ref{lemma:deligne} to conclude that
\begin{equation*}
\#\{x,y,z_1,z_2,w \in \mathbb{F}_p:  K_h(x,y,z_1,w)=P_h(z_1,z_2,w)=0 \}= p^3+O(p^2).
\end{equation*}
The contribution from points with $w=0$ is
\begin{equation*}
\sum_{\substack{z_1,z_2\in \mathbb{F}_p \\ P_h(z_1,z_2,0)=0}}\sum_{\substack{x,y\in\mathbb{F}_p\\ K_h(x,y,z_1,0)=0}}1 \ll p\sum_{\substack{z_1,z_2\in \mathbb{F}_p \\ P_h(z_1,z_2,0)=0}}1 \ll p^2.
\end{equation*}
It follows that 
\begin{equation*}
T(p;0,0)  = p^2+O(p).
\end{equation*}
Now let us suppose $p\nmid (M,N).$ We will use Lemma~\ref{lemma:HMOM} to show that (generically) we have square-root cancellation. Fix $j\geq 1$ and $\tau \in \mathbb{F}_{p^{j}}.$ Then we must count 
\begin{equation*}
N_{j}(\tau):=\#\{x,y,z_1,z_2 \in \mathbb{F}_{p^{j}}:  K_h(x,y,z_1,1)=P_h(z_1,z_2,1)=0 \text{ and } Mx+Ny=\tau \}.
\end{equation*}
We may suppose WLOG that $p\nmid N.$ Then we may write this as 
\begin{equation*}
N_{j}(\tau)=\#\{x,z_1,z_2 \in \mathbb{F}_{p^{j}}:  K_h(x,(\tau-Mx)N^{-1},z_1,1)=P_h(z_1,z_2,1)=0\}.
\end{equation*}
We claim the following.
\\ \\
{\it Claim.} $K_h(x,(\tau-Mx)N^{-1},z_1,1)=P_h(z_1,z_2,1)=0$ defines an absolutely irreducible curve for all but at most $O(1)$ values of $\tau$.
\begin{proof}[Proof (of claim)]
This proceeds much the same way as the analogous claim contained in Lemma~\ref{lemma:t1case}. We examine the absolute irreducibility of this curve by investigating the smoothness properties of its projectivisation over $\overline{\mathbb{F}}_p$. 
There are two cases to consider.
\begin{enumerate}
	\item If $f_d(N,-M)=0$ in $\overline{\mathbb{F}}_p,$ then we exclude the value $\tau = Nf_{d-1}(N,-M)/(\partial f_d/\partial y)(N,-M),$ as we may, and consider the equations
	\begin{align*}
	\bigg[\frac{f_{d-1}(N,-M) }{N^{d-1}}- \tau\frac{(\partial f_d/\partial y)(N,-M)}{N^d} \bigg]x^{d-1} &=g_{d-1}(b,a)hz_1^{d-1} + (\text{terms involving $w$})
	\end{align*}
	and
	\begin{align*}
	P_h(z_1,z_2,w)= 0.
	\end{align*}
	Call the first equation $\psi(x,z_1,w)=0.$ We must examine the system
	$$\psi = P_h = 0 \text{ and } \lambda \nabla \psi = \mu \nabla P_h$$
	for some $(\lambda,\mu)\neq (0,0).$ Clearly $\lambda \neq 0$ as $P_h$ is smooth. Let us suppose that $\mu=0.$ We consider possible singular points of the form $[r:s:t:0].$ The equations $(\partial \psi/\partial x)(r,s)=(\partial \psi/\partial z_1)(r,s)=0$ yield $r=s=0,$ and then the equation $P_h(0,t,0)=0$ yields $t=0$, a contradiction. Thus we may look for singular points of the form $[r:s:t:1].$ The equation $\partial \psi/\partial x=0$ becomes $f'(x,(\tau-Mx)N^{-1})=0$ and, as above, the equation $\partial \psi/\partial z_1=0$ constrains $s$ to at most $O(1)$ values. In this case, there exists a valid $r$ if and only if the discriminant
	\begin{equation*}
	\mathrm{Disc}_x\bigg(f(x,(\tau-Mx)N^{-1}) - (2ab)^{d-1}hg\bigg(\frac{s+h}{2a},\frac{s-h}{2b}\bigg) -(2ab)^{d-1}k\bigg)
	\end{equation*}
	vanishes. This is the same discriminant as defined in the proof of Lemma~\ref{lemma:t1case} (see~(\ref{eq:discforKh})), and we showed there that this vanishes for at most $O(1)$ values of $\tau.$ Hence the claim holds in this case.
	\\ \\
	Thus we may suppose that $\lambda \mu \neq 0.$ Our equations become
	\begin{align*}
\frac{\partial{\psi}}{\partial{x}} &= 0, \\
\lambda \frac{\partial{\psi}}{\partial{z_1}} &= \mu\frac{\partial{P_h}}{\partial{z_1}},   \\
\frac{\partial{P_h}}{\partial{z_2}}  &= 0, \\
\lambda \frac{\partial{\psi}}{\partial{w}} &= \mu\frac{\partial{P_h}}{\partial{w}}.
\end{align*}
	On replacing $\psi(x,z_1,w)$ with $\psi(x,z_2,w)$ and using the symmetry of $P_h(z_1,z_2,w)$ in the first two arguments, we may adjoin to this system the equation $\partial{P_h}/\partial{z_1}  = 0.$ Now, if $[r:s:t:1]$ is a singular point then the equations $(\partial{P_h}/\partial{z_1})(r,s) =(\partial{P_h}/\partial{z_2})(r,s) =0$ together with Euler's identity and the equation $P_h(s,t,1)=0$ yield $(\partial P_h/\partial w)(r,s)=0.$ This is a contradiction as $P_h$ is smooth. Thus we may restrict ourselves to looking at singular points of the form $[r:s:t:0].$ But now our first equation implies that $r=0,$ and similar to the proof of the claim above, the equations $(\partial P_h/\partial z_1)(r,s)=(\partial P_h/\partial z_2)(r,s)=0$ either cannot be satisfied in the case $d=3,$ or imply that $s=t=0$ in the case $d=4.$ In both cases we arrive at a contradiction. 
	
	\item If $f_d(N,-M)\neq 0$ in $\overline{\mathbb{F}}_p,$ then we must instead consider the equations
	\begin{align*}
	\frac{f_d(N,-M)}{N^d} x^{d} &=g_{d-1}(b,a)hz^{d-1}w + (\text{terms involving $w$}).
	\end{align*} 
	and
	\begin{align*}
	P_h(z_1,z_2,w)= 0.
	\end{align*}
	One can proceed exactly as above and conclude there are at most $O(1)$ values of $\tau$ for which this system is singular.
\end{enumerate}
This completes the proof of the claim.
\end{proof}

Now, for the $O(1)$ values of $\tau$ for which the curve $K_h(x,(\tau-Mx)N^{-1},z,1) =P_h(z_1,z_2,1)=0$ is not absolutely irreducible, we will apply the trivial bound $N_{j}(\tau) \ll p^{j}.$ In the complementary case, by Lemma~\ref{lemma:weil}, we obtain 
\begin{equation*}
N_{j}(\tau) = p^{j} + O(p^{j/2})
\end{equation*}
Thus we may take $N_{j}=p^{j}$ and $\kappa=2$ in the statement of Lemma~\ref{lemma:HMOM}, and the result follows.
\end{proof}

We are left to proof Proposition~\ref{lemma:browningexp} in the cases when $t\in\{3,4\}.$ In this regime we only require an upper bound for $\Sigma_t(p;M,N)$ whenever $p\in \mathcal{P}.$ To do this we follow the argument of Browning~\cite{Browning}. By definition, we have 
\begin{align}
\Sigma_t(p;M,N) &=\sum_{\substack{x,y,z_1,\ldots,z_t\,\,(\text{mod}\,\,p) \\ K_h(x,y,z_1,1)=0 \\ (z_i-z_j)P_h(z_i,z_j,1)=0\text{ for } i\neq j}}e\bigg(\frac{Mx+Ny}{p}\bigg).
\end{align}
Let $\sigma({\bf z})$ denote the number of distinct elements in the set $\{z_1,\ldots,z_t\},$ so that $1\leq \sigma({\bf z}) \leq t.$ We may split our sum according to the value of $\sigma({\bf z}).$ The contribution from those $z_1,\ldots,z_t$ for which $\sigma({\bf z})=1$ is $\Sigma_1$, and this event arises in precisely one way. The contribution from those $z_1,\ldots,z_t$ with $\sigma({\bf z})=2$ is $\Sigma_2-\Sigma_1$ by~(\ref{eq:sigma12}) and this event arises in $c_t$ ways, for some appropriate constant $c_t$ depending only on $t$. Let us now consider the contribution from those ${\bf z}$ with $\sigma({\bf z})=3.$ We claim the following. From the definition of $P_h$ it is easy to see the following:
\begin{enumerate}
	\item If $d=3$ we have
$$P_h(x,y,1) = P_h(x,z,1) \iff y=z.$$
	\item If $d=4$ we have
$$P_h(x,y,1) = P_h(x,z,1) \iff y=z\text{ or } x+y+z=g_{d-1}(b,a)^{-1}h^{-1}c(h)$$
for some polynomial $c(h)$ whose coefficients depend on $g,a$ and $b.$
\end{enumerate}
With this, it is clear there is no contribution from the case $\sigma({\bf x}) \geq 3,$ whenever $d=3.$ Hence, we obtain 
\begin{equation}
\Sigma_t(p;M,N) = (1-c_t)\Sigma_1+c_t\Sigma_2
\end{equation}
whenever $t\in \{3,4\}$ and $d=3,$ and Proposition~\ref{lemma:browningexp} follows. Thus we may suppose that $d=4.$ In this case it is clear that there is no contribution from $\sigma({\bf x}) \geq 4.$ This just leaves us to examine the case $\sigma({\bf x})=3.$ This event will arise in $d_t$ ways, for some appropriate constant $d_t$ depending only on $t$. We must examine the exponential sum
\begin{equation}
\sum_{\substack{x,y,z_1,z_2\,\,(\text{mod}\,\,p) \\ (z_1-z_2)(2z_1+z_2-c(h))(z_1+2z_2-c(h))\neq 0 \\ K_h(x,y,z_1,1)=0 \\ P_h(z_1,z_2,1)=0}}e\bigg(\frac{Mx+Ny}{p}\bigg).
\end{equation}
\sloppy As $p\in \mathcal{P}$ and so, in particular, $p\nmid 6 h g_{d-1}(b,a)),$ the polynomials $P_h(z,z,1), P_h(z,c(h)-2z,1)$ and $P_h(z,2^{-1}(c(h)-z),1)$ are non-zero quadratic polynomials in $z.$ It follows that the terms omitted contribute $O(p)$ altogether. In other words, this equals 
\begin{equation}
\sum_{\substack{x,y,z_1,z_2\,\,(\text{mod}\,\,p)  \\ K_h(x,y,z_1,1)=0 \\ P_h(z_1,z_2,1)=0}}e\bigg(\frac{Mx+Ny}{p}\bigg)+O(p).
\end{equation}
This exponential sum is precisely $\Sigma_2-\Sigma_1+O(p)$ by~(\ref{eq:sigma12}) and~(\ref{eq:sigma12next}). Thus, we obtain
\begin{equation}
\Sigma_t(p;M,N) = (1-c_t-d_t)\Sigma_1(p;M,N)+(c_t+d_t)\Sigma_2(p;M,N) + O(p)
\end{equation}
whenever $t\in\{3,4\}$ and $d=4.$ This completes the proof of Proposition~\ref{lemma:browningexp}.

\section{Estimation of exponential sums (II)}

Recall the definition of $\Phi$ given by~(\ref{eq:defphi}). In this section we will estimate the exponential sums
\begin{equation}
\Phi(h;M,N) = \sum_{\substack{x,y\,\,(\text{mod}\,\,h) \\ (2ab)^{d-1}f(x,y)\equiv (2ab)^{d-1}k\,\,(\text{mod}\,\,h)}}e\bigg(\frac{Mx+Ny}{h}\bigg)
\end{equation}
where $h$ is a fixed, positive integer and $M$ and $N$ are integers. We first note the following multiplicativity property of these sums: if $(j,l)=1$ and $j\overline{j}+l\overline{l}=1$ is it not difficult to show that 
\begin{equation}
\Phi(jl;M,N) = \Phi(j;\overline{l}M,\overline{l}N)\Phi(l;\overline{j}M,\overline{j}N).
\end{equation}
Thus is suffices to study $\Phi(p^l;M,N)$ for some prime $p$ and integer $l\geq 1.$ To tackle these exponential sums we will obtain square-root cancellation in the generic case for $l=1,$ and use elementary bounds for higher powers.

We now consider estimating $\Phi(p;M,N).$ This exponential sum will be sensitive to whether or not the curve $\{f(x_1,x_2)=k\} \subset \mathbb{A}_{\mathbb{F}_p}^{2}$ contains a line. In this case, if this line can be parametrised by $Mx+Ny= \tau,$ for some constant $\tau\in \mathbb{F}_p$, we will get zero cancellation and hence the sum will be large. Because we are assuming the curve $f(x,y)=k$ contains no line definable over $\mathbb{Q},$ this should be a rare event. Our aim is to define a non-zero integer $\Delta_f(M,N,k)$ such that whenever this occurs, we must have that $p|\Delta_f.$ This, together with size bounds on $\Delta_f,$ will be enough to control the cases where this exponential sum is large. This is the content of the following lemma. 

\begin{lemma}\label{lemma:linesinphi}
Let $f,k$ be as in the statement of Theorem~\ref{theo:generalmain}. Fix integers $(M,N)\in \mathbb{Z}^{2},$ not both zero. Fix a prime $p$ such that $p\nmid (M,N)$ and $p$ is sufficiently large in terms of $d.$ Let $\tau\in \overline{\mathbb{F}}_p.$ Then there exists a non-zero integer $\Delta_f(M,N,k)$ such that whenever the curve $\{f(x,y)=k\}\subset \mathbb{A}_{\overline{\mathbb{F}}_p}^{2}$ contains the line $Mx+Ny = \tau,$ we must have $p|\Delta_f(M,N,k).$ Moreover, $\Delta_f(M,N,k)$ satisfies the size bound 
\begin{equation}
|\Delta_f(M,N,k)| \ll_{f,d} |k|\max\{|M|, |N|\}^{d^2}.
\end{equation}
\end{lemma}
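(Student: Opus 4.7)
Assume without loss of generality that $p \nmid N$; the case $p \nmid M$ is handled by the symmetric argument with $x$ and $y$ interchanged. Parametrising the line as $(x, y) = (t, (\tau - Mt)/N)$, its containment in $\{f(x,y) = k\}$ over $\overline{\mathbb{F}}_p$ is equivalent to
\begin{equation*}
N^d f\bigl(t, (\tau - Mt)/N\bigr) - N^d k \equiv 0 \qquad \text{in } \overline{\mathbb{F}}_p[t]
\end{equation*}
for some $\tau = \tau_0 \in \overline{\mathbb{F}}_p$. Expanding in powers of $t$ produces polynomials $c_0(\tau), c_1(\tau), \ldots, c_d(\tau) \in \mathbb{Z}[M, N][\tau]$ with $\deg_\tau c_l \le d - l$, such that the $t^l$-coefficient is $c_l(\tau)$ for $l \geq 1$ and $c_0(\tau) - N^d k$ for $l = 0$. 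A line exists iff these polynomials admit a common $\tau_0 \in \overline{\mathbb{F}}_p$, where the $l = 0$ condition reads $c_0(\tau_0) \equiv N^d k \pmod{p}$.

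A direct computation gives $c_d = f_d(N, -M)$ and $c_{d-1}(\tau) = \alpha + \beta\tau$, where $\alpha := N f_{d-1}(N, -M)$ and $\beta := (\partial f_d/\partial y)(N, -M)$. In the generic case $\beta \neq 0$ as an integer, solving $c_{d-1} = 0$ formally gives the candidate $\tau_0 = -\alpha/\beta$, and for each $l \in \{0, 1, \ldots, d\} \setminus \{d-1\}$ we define the integer
\begin{equation*}
E_l(M, N, k) := \beta^{d-l}\, c_l\bigl(M, N, -\alpha/\beta\bigr) - [l = 0]\cdot N^d k\, \beta^d,
\end{equation*}
which clears all denominators. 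The key divisibility claim is that line existence mod $p$ forces $p \mid E_l$ for every such $l$. This splits into sub-cases: if $p \nmid \beta$, then $\tau_0 \equiv -\alpha/\beta \pmod p$ and $E_l \equiv \beta^{d-l}[c_l(\tau_0) - [l=0]N^d k] \equiv 0 \pmod p$ directly; if $p \mid \beta$, then $c_{d-1}(\tau_0) \equiv \alpha \pmod p$ forces $p \mid \alpha$, and the explicit expansion $E_l = \sum_m a_{l,m}(M,N)(-\alpha)^m \beta^{d-l-m}$ shows every summand carries a factor of $\alpha$ or $\beta$ (for $l < d$), while $E_d = f_d(N, -M) = c_d$ is divisible by $p$ by line existence.

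Next, not all $E_l$ can vanish as integers simultaneously: if they did, then $\tau_0 := -\alpha/\beta \in \mathbb{Q}$ would satisfy $c_l(\tau_0) = 0$ for all $l \geq 1$ (including tautologically $c_{d-1}(\tau_0) = 0$ and $c_d = E_d = 0$) together with $c_0(\tau_0) = N^d k$, meaning the $\mathbb{Q}$-rational line $Mx + Ny = \tau_0$ would lie inside $\{f(x,y) = k\}$ — contradicting hypothesis (1) of Theorem~\ref{theo:generalmain}. Pick any $l^*$ with $E_{l^*} \neq 0$ and set $\Delta_f(M, N, k) := E_{l^*}$; this integer is non-zero and divisible by $p$ whenever the line exists mod $p$. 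The edge case $\beta = 0$ as an integer is handled identically with the roles of $x, y$ swapped, using $\beta_x := (\partial f_d/\partial x)(N, -M)$ in place of $\beta$; smoothness of $f_d$ (hypothesis (2)) together with Euler's identity ensures $(\beta_x, \beta) \neq (0, 0)$ at any nonzero $(N, -M)$, so at least one parametrisation is non-degenerate.

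For the size bound, the coefficient $a_{l, m}$ of $\tau^m$ in $c_l$ is a polynomial of degree $\le d - m$ in $(M, N)$, while $|\alpha|, |\beta| \ll \max(|M|, |N|)^{d-1}$. Hence each term in $E_l$ is bounded by
\begin{equation*}
\max(|M|, |N|)^{(d-m) + m(d-1) + (d-l-m)(d-1)} = \max(|M|, |N|)^{d - m + (d-1)(d-l)},
\end{equation*}
which attains its maximum $d^2$ at $l = 0$, $m = 0$; the auxiliary term $N^d k\, \beta^d$ in $E_0$ contributes at most $|k|\max(|M|, |N|)^{d^2}$, giving the required bound. The main obstacle I anticipate is the sub-case $p \mid \beta$: here $\tau_0 = -\alpha/\beta$ is not literally defined in $\overline{\mathbb{F}}_p$, so one cannot directly substitute in $c_l$; the divisibility $p \mid E_l$ must instead be read off the expanded polynomial form, relying crucially on the auxiliary fact that $p \mid \beta$ together with line existence forces $p \mid \alpha$.
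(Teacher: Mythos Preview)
Your approach is essentially the paper's: parametrise the line, expand in $t$, solve the $t^{d-1}$ coefficient for $\tau$ in terms of $\alpha,\beta$, substitute into the remaining coefficients to obtain integers $E_l$ that must all be divisible by $p$, and invoke hypothesis~(1) to guarantee some $E_l\neq 0$. The paper presents this as a sequential case split (first on whether $f_d(N,-M)=0$ in $\mathbb{Q}$, then on whether $p\mid\beta$), whereas you package the $E_l$ uniformly and treat the sub-case $p\mid\beta$ by reading divisibility off the expanded form; the substance is the same, and your packaging is arguably tidier since it makes $\Delta_f$ manifestly independent of $p$.

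Two small points to tighten. First, your edge case $\beta=0$ is not fully handled by ``swapping the roles of $x,y$'': that swap needs the parametrisation $x=(\tau-Ny)/M$, hence $M\neq 0$ as an integer, which is not guaranteed (e.g.\ $f_d=x^3-y^3$, $(M,N)=(0,1)$ gives $\beta=0$ and $M=0$). The cleaner fix---which is exactly the paper's first case---is to note that $\beta=0$, $N\neq 0$, and $f_d$ square-free together force $f_d(N,-M)\neq 0$, so $E_d=f_d(N,-M)$ already furnishes a nonzero $\Delta_f$; Euler alone does not give this, you need square-freeness. Second, in the size estimate you claim $|\alpha|\ll\max(|M|,|N|)^{d-1}$, but $\alpha=N f_{d-1}(N,-M)$ has degree $d$ in $(M,N)$; fortunately the $m$-dependence in your exponent count cancels either way, so the final bound $\max(|M|,|N|)^{d^2}$ is unaffected.
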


\begin{proof}
We may suppose WLOG that $N\neq 0.$ (If $M\neq 0$ then an identical case-analysis argument holds with minor adjustments.) With notation as above, let us suppose that the curve $f(x,y)=k$ contains the line $Mx+Ny=\tau$ over $\overline{\mathbb{F}}_p.$ In other words, we have the polynomial identity
\begin{equation}
f(x,(\tau-Mx)N^{-1})=0 
\end{equation}
in $\overline{\mathbb{F}}_p[x].$ By Taylor expansion (see e.g.~(\ref{eq:taylorexpansion})), and using the fact $p\nmid N,$ we see our assumption is that 
the identity 
\begin{equation}\label{eq:lineFp}
\frac{f_d(N,-M)}{N^d}x^d + \bigg[\frac{f_{d-1}(N,-M)}{N^{d-1}} - \tau\frac{(\partial f_d/\partial y)(N,-M)}{N^d} \bigg]x^{d-1}+\ldots=k
\end{equation}
holds in $\overline{\mathbb{F}}_p[x].$ The proof now proceeds by careful case-analysis.

If $f_d(N,-M)\neq 0$ in $\mathbb{Q}$, then for the leading term to vanish we must have $p|f_d(N,-M).$ Thus in this case we define $\Delta_f(M,N,k) := f_d(N,-M)$ and the size bound is easily satisfied.

\sloppy Thus, proceeding, we may suppose that $f_d(N,-M)=0$ in $\mathbb{Q}$. In this case, by Euler's identity, and the fact $f_d$ is assumed to be smooth and $N\neq 0,$ we must have $(\partial f_d/\partial y)(N,-M)\neq 0$ in $\mathbb{Q}.$ 

If $p|(\partial f_d/\partial y)(N,-M)$ then we set $\Delta_f(M,N,k):=(\partial f_d/\partial y)(N,-M)$ and we are done; again the size bound is immediate.

Now let us suppose that $p\nmid(\partial f_d/\partial y)(N,-M).$ The vanishing of the coefficient of $x^{d-1}$ implies that
\begin{equation}\label{eq:tau}
\tau = \frac{N f_{d-1}(N,-M)}{(\partial f_d/\partial y)(N,-M)}\,\,\,\text{in $\overline{\mathbb{F}}_p$}.
\end{equation}
Substituting~(\ref{eq:tau}) back into~(\ref{eq:lineFp}), we obtain
\begin{equation}\label{eq:newlineFp}
\sum_{j=0}^{d} E_j(M,N) x^j = k \,\,\,\text{in $\overline{\mathbb{F}}_p[x]$,}
\end{equation}
where, for each $j\in \{0,\ldots,d\},$ we have defined the rational numbers 
\begin{align} \nonumber
E_j(M,N) &:= \frac{1}{N^j}\sum_{i=j}^{d} \frac{(-1)^{i-j}}{(i-j)!} \bigg(\frac{f_{d-1}(N,-M)}{(\partial f_d/\partial y)(N,-M)}\bigg)^{i-j}\frac{\partial^{i-j}f_i}{\partial y^{i-j}}(N,-M) \\ \nonumber
&= \frac{\sum_{i=j}^{d} \frac{(d-j)!}{(i-j)!}f_{d-1}(N,-M)^{i-j}(\partial f_d/\partial y)(N,-M)^{d-i} (\partial^{i-j} f_i/\partial y^{i-j})(N,-M)}{N^j(\partial f_d/\partial y)(N,-M)^{d-j}(d-j)!} \\ 
&:= \frac{A_j(M,N)}{B_j(M,N)}.
\end{align}
Note that $A_j(M,N)$ and $B_j(M,N)$ are both integers, $B_j(M,N)$ is non-zero and our assumptions imply that $p\nmid B_j(M,N)$ (for every $j$). 

Suppose that $E_j(M,N)\neq 0$ in $\mathbb{Q}$ for some $j\in \{1,\ldots,d-1\}.$ Let $J$ be the maximal such integer. Then, in particular, we must have that $p$ divides $A_J(M,N)$ and in this case we set $\Delta_f(M,N,k) := A_J(M,N).$ Note 
\begin{equation*}
|\Delta_f(M,N,k)| \ll \max_{J\leq i\leq d}\{|M|,|N|\}^{(d-1)(i-j)+(d-2)(d-i)+j} \ll \max\{|M|,|N|\}^{d^2},
\end{equation*}
and so the size bound is satisfied. 

Otherwise, we arrive at the identity 
\begin{equation*}
E_0(M,N) = k \,\,\,\text{in $\overline{\mathbb{F}}_p$.}
\end{equation*}
If $E_0(M,N)=k$ in $\mathbb{Q},$ then, together with all of our assumptions thus far, we arrive at a bonafide identity over $\mathbb{Q}:$
\begin{equation*}
\sum_{j=0}^{d} E_j(M,N) x^j = k \,\,\,\text{in $\mathbb{Q}[x]$.}
\end{equation*}
Translating everything back, this says that $f(x,y)+k$ contains the line 
\begin{equation*}
Mx+Ny = \frac{N f_{d-1}(N,-M)}{(\partial f_d/\partial y)(N,-M)}.
\end{equation*}
This is a rational line, which is a contradiction.

Thus, we must have that $E_0(M,N)\neq k$ in $\mathbb{Q}.$ In this case we must have $p$ divides the numerator of $E_0(M,N)-k,$ and so we define $\Delta_f(M,N,k):=A_0(M,N)-kB_0(M,N).$ As above, we have $|A_0(M,N)| \ll \max\{|M|,|N|\}^{d^2}.$ Note that 
$$|B_0(M,N)| \ll \max\{|M|,|N|\}^{d(d-1)} \ll \max\{|M|,|N|\}^{d^2}.$$
As
$$|\Delta_f(M,N,k)| \leq |A_0(M,N)| + |k| |B_0(M,N)|,$$
the stated bound follows.
\end{proof}

With Lemma~\ref{lemma:linesinphi}, we can prove the following. 

\begin{lemma}\label{lemma:primes}
For any integers $M$ and $N$ we have 
\begin{equation}\label{eq:phipbound}
\Phi(p;M,N)\ll p^{1/2}(p,\Delta_f(M,N,k))^{1/2}.
\end{equation}
\end{lemma}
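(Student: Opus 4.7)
The strategy is to split into cases based on whether $p\mid(M,N)$ and whether $p\mid\Delta_f(M,N,k)$. Throughout, a useful trivial bound is
$$|\Phi(p;M,N)|\leq \#\{(x,y)\in\mathbb{F}_p^2:f(x,y)\equiv k\pmod p\}=O(p),$$
obtained by observing that for all but $O(1)$ values of $x$, the polynomial in $y$ given by $f(x,y)-k$ has degree at most $d$ and hence at most $d$ roots; the exceptional $x$'s are those where the leading $y$-coefficient in $f$ vanishes, and there are $O(1)$ of them because assumption (1) of Theorem~\ref{theo:generalmain} rules out $f$ being independent of $y$.

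In the easy cases where $p\mid\Delta_f$ or $p\mid(M,N)$, we have $p^{1/2}(p,\Delta_f)^{1/2}\geq p$ and the trivial bound above suffices, provided we verify that $p\mid\Delta_f$ is automatic in the subcase $p\mid(M,N)$. This amounts to a direct inspection of the four possibilities for $\Delta_f$ arising in the proof of Lemma~\ref{lemma:linesinphi}: in every case, $\Delta_f(M,N,k)$ is a polynomial in $M,N,k$ with no constant term in $(M,N)$. This uses crucially that $f$ has zero constant term, so that the various homogeneous evaluations $f_d(N,-M)$, $(\partial f_d/\partial y)(N,-M)$, $A_J(M,N)$, $A_0(M,N)$, $B_0(M,N)$ all vanish at the origin. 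Hence $p\mid\Delta_f$ follows automatically from $p\mid(M,N)$.

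The main case is $p\nmid(M,N)$ and $p\nmid\Delta_f$, where genuine square-root cancellation is required. Here I apply Hooley's method of moments (Lemma~\ref{lemma:HMOM}) with $F(x,y)=Mx+Ny$ and the single constraint $(2ab)^{d-1}(f(x,y)-k)=0$. WLOG $p\nmid N$; for each $j\geq 1$ and $\tau\in\mathbb{F}_{p^j}$, substituting $y=(\tau-Mx)N^{-1}$ reduces the fibre count $N_j(\tau)$ to the number of roots in $\mathbb{F}_{p^j}$ of the one-variable polynomial $\phi_\tau(x):=f(x,(\tau-Mx)N^{-1})-k$. By Lemma~\ref{lemma:linesinphi} together with $p\nmid\Delta_f$, no $\phi_\tau$ is identically zero, so $N_j(\tau)\leq d$ for every $\tau$. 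Combined with $\sum_\tau N_j(\tau)=O(p^j)$ (the total number of points of $\{f=k\}$ in $\mathbb{F}_{p^j}^2$), this yields $\sum_\tau|N_j(\tau)-1|^2\ll p^j$; Hooley's lemma with $\kappa=1$ then gives $\Phi\ll p^{1/2}$, matching the target since $(p,\Delta_f)=1$.

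I expect no serious obstacle: the technical heart has been encapsulated in Lemma~\ref{lemma:linesinphi}, so the main case reduces to a standard application of Hooley's method once the fibre counts are uniformly bounded. The only non-routine bookkeeping step is confirming the divisibility assertion in the subcase $p\mid(M,N)$, which is a short check using that $f$ has zero constant term.
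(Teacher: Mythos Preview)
Your proposal is correct and follows essentially the same route as the paper. The paper also splits into the trivial case (where it notes, by inspection of Lemma~\ref{lemma:linesinphi}, that $(M,N)\mid\Delta_f(M,N,k)$, which is exactly your divisibility check) and the main case, where it likewise applies Hooley's method of moments to the fibre counts $N_j(\tau)$; the only cosmetic difference is that the paper takes $N_j=0$ rather than your $N_j=1$, which is immaterial since $N_j(\tau)=O_d(1)$ in either formulation.
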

\begin{proof}
We may suppose that $p$ is sufficiently large in terms of $f,a$ and $b$ as otherwise both sides are $O(1).$ We note that 
\begin{align*}
\#\{x,y\in \mathbb{F}_p: (2ab)^{d-1}f(x,y)\equiv (2ab)^{d-1}k\,\,(\text{mod}\,\,p)\} \ll p.
\end{align*}
Thus, by bounding trivially, we may apply the bound $\Phi(p;M,N) \ll p$ whenever $M=N=0, p|(M,N)$ or $p|\Delta_f(M,N,k).$ By inspecting the proof of Lemma~\ref{lemma:linesinphi}, it is clear that $\Delta_f(0,0,k)=0$ and $(M,N)|\Delta_f(M,N,k).$ Thus~(\ref{eq:phipbound}) holds in these cases. Proceeding, we may suppose $M$ and $N$ are not both zero and $p\nmid \Delta_f(M,N,k).$

In this regime, we will obtain square-root cancellation in $\Phi(p;M,N)$ using Lemma~\ref{lemma:HMOM}. For any $j\geq 1$ and $\tau \in \mathbb{F}_{p^j},$ we define 
$$N_{j}(\tau) = \#\{x,y\in \mathbb{F}_{p^{j}}: (2ab)^{d-1}f(x,y) = (2ab)^{d-1}k\text{ and } Mx+Ny = \tau\}.$$
We may suppose WLOG that $p\nmid N.$ We can write
$$N_{j}(\tau) = \#\{x\in \mathbb{F}_{p^{j}}: (2ab)^{d-1}f(x,(\tau-Mx)N^{-1}) = (2ab)^{d-1}k\}.$$
Since we are assuming $p\nmid \Delta_f(M,N,k),$ it follows from Lemma~\ref{lemma:linesinphi} that the curve $f(x,y)=k$ contains no lines over $\mathbb{F}_{p^j}.$ Hence $f(x,(\tau-Mx)N^{-1})-k$ is never the zero polynomial in $\mathbb{F}_{p^j}[x]$, and we can bound $N_j(\tau) \ll 1.$ Thus we may take $N_{j}=0$ and $\kappa=1$ in the statement of Lemma~\ref{lemma:HMOM}, and the result follows. 
\end{proof}

We now turn our attention to bounding $\Phi(p^l;M,N)$ when $l\geq2.$ To do this we will bound trivially and forego any cancellation in our exponential sum.  In other words, we bound
\begin{equation}
\Phi(p^l;M,N) \leq \#\{x,y\in \mathbb{Z}/p^l\mathbb{Z}: (2ab)^{d-1}f(x,y) \equiv (2ab)^{d-1}k\,\,(\text{mod}\,\,p^l)\}
\end{equation}
and aim to estimate the count on the RHS. To do this we need some understanding of the number of solutions to polynomial congruences over the finite rings $\mathbb{Z}/p^l\mathbb{Z}$.

To this end, let $Q\in \mathbb{Z}[x]$ be a polynomial of degree $d$ with leading coefficient $a_d.$ We will make the dependencies on implied constants explicit for the following estimates involving $Q$. We are interested in general bounds for the count
 \begin{equation}
\#\{x \in \mathbb{Z}/p^l\mathbb{Z}: Q(x)\equiv 0\,\,(\text{mod}\,\,p^l)\}.
\end{equation}
As a first estimate, whenever $p\nmid \mathrm{cont}(Q),$ we have
 \begin{equation}\label{eq:firstestimate}
\#\{x \in \mathbb{Z}/p^l\mathbb{Z}: Q(x)\equiv 0\,\,(\text{mod}\,\,p^l)\} \ll_{d} p^{l-1}.
\end{equation}
This is easily proven by induction; there are $O_d(1)$ roots when $l=1$ and, when $l\geq 2,$ any element of $\mathbb{Z}/p^{l-1}\mathbb{Z}$ has exactly $p$ lifts to an element of $\mathbb{Z}/p^l\mathbb{Z}.$ 

By using $p$-adic arithmetic, we can improve on this bound for large $l$.
 
\begin{lemma}
With notation as above, for any prime $p$ and integer $l\geq 1,$ we have 
\begin{equation}\label{eq:padicbound}
\#\{x\in \mathbb{Z}/p^{l}\mathbb{Z}: Q(x)\equiv 0\,\,(\text{mod}\,\,p^l)\} \ll_{d} p^{l-l/d+v_p(a_d)/d},
\end{equation}
where $v_p(a_d)$ denotes the $p$-adic valuation of the non-zero integer $a_d.$
\end{lemma}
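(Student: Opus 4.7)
The plan is to work $p$-adically. Extend the valuation $v_p$ to $\overline{\mathbb{Q}}_p$ (normalized so $v_p(p)=1$) and factor
$$Q(x)=a_d\prod_{i=1}^{d}(x-\alpha_i),\qquad \alpha_i\in\overline{\mathbb{Q}}_p.$$
If $l\leq v_p(a_d)$ the claimed bound exceeds $p^l$ and is trivial, so we may assume $c:=(l-v_p(a_d))/d\geq 0$. Any class $x\in\mathbb{Z}/p^l\mathbb{Z}$ with $Q(x)\equiv 0\,(\mathrm{mod}\,p^l)$ admits a lift $\tilde{x}\in\mathbb{Z}_p$ satisfying
$$v_p(a_d)+\sum_{i=1}^{d}v_p(\tilde{x}-\alpha_i)=v_p(Q(\tilde{x}))\geq l,$$
so by the pigeonhole principle there exists an index $i\in\{1,\ldots,d\}$ (depending on $\tilde{x}$) with $v_p(\tilde{x}-\alpha_i)\geq c$.

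It therefore suffices to bound, for each fixed $i$, the number of classes $x\in\mathbb{Z}/p^l\mathbb{Z}$ admitting some representative $\tilde{x}\in\mathbb{Z}_p$ with $v_p(\tilde{x}-\alpha_i)\geq c$. If no such representative exists the count is zero; otherwise, fixing one such $\tilde{x}_0$, any other admissible lift $\tilde{x}$ satisfies
$$v_p(\tilde{x}-\tilde{x}_0)=v_p((\tilde{x}-\alpha_i)-(\tilde{x}_0-\alpha_i))\geq c$$
by the ultrametric inequality, so $\tilde{x}$ lies in the coset $\tilde{x}_0+p^{\lceil c\rceil}\mathbb{Z}_p$. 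This yields at most $p^{l-\lceil c\rceil}\leq p^{l-c}=p^{l-l/d+v_p(a_d)/d}$ classes for each $i$, and summing over $i$ gives the desired estimate with implied constant $d$.

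The only potential subtlety is that the roots $\alpha_i$ may live in a proper extension of $\mathbb{Q}_p$, but this causes no difficulty since the ultrametric comparison above involves only differences of elements of $\overline{\mathbb{Q}}_p$ and never requires $\alpha_i\in\mathbb{Z}_p$. Note also that the correction term $v_p(a_d)/d$ in the exponent accounts precisely for the possibility that $a_d$ is divisible by a high power of $p$, so no coprimality hypothesis on $p$ and $a_d$ is required.
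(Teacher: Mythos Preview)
Your proof is correct and follows the same core strategy as the paper: factor $Q$ over $\overline{\mathbb{Q}}_p$, use the product formula plus pigeonhole to force any solution $p$-adically close to some root $\alpha_i$, then count residues in that $p$-adic ball. Your coset argument via the ultrametric inequality is in fact a bit cleaner than the paper's version, which phrases the same count through Haar measure and an auxiliary decomposition $x=\alpha+\lambda p$, but the underlying idea is identical.
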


\begin{proof}
We will prove this by using the fact any root must be ``$p$-adically close" to one of the $d$ (not necessarily distinct) roots of $Q$ in an algebraic closure of $\mathbb{Q}_p,$ the $p$-adic numbers. To this end, we define the following (standard) notation. We let $|\cdot|_p$ denote the usual norm in $\overline{\mathbb{Q}}_p$ and $\mu$ denote the Haar measure. For any $\beta \in \overline{\mathbb{Q}}_p$ and $m\in \mathbb{Z}$ we define 
$$B(\beta,p^{m}) := \{y\in \overline{\mathbb{Q}}_p: |y-\beta|_p \leq p^m\},$$
i.e. the (closed) ball of radius $p^m$ centered at $\beta,$ so that $\mu(B(\beta,p^m))=p^m.$
The estimate~(\ref{eq:padicbound}) is true for $l=1$ since in this case we can use the bound $O_d(1).$ 

Proceeding, we fix an integer $l\geq 2$ and roots $\beta_1,\ldots,\beta_d$ of $Q$ in an algebraic closure $\overline{\mathbb{Q}}_p$. In $\overline{\mathbb{Q}}_p[t]$ we have the factorisation
\begin{equation*}
Q(t) = a_d\prod_{i=1}^{d}(t-\beta_i).
\end{equation*}
We can partition the set we are interested as follows:
\begin{equation*}
\bigcup_{i=1}^{d} \#\{x\in \mathbb{Z}/p^{l}\mathbb{Z}: Q(x)\equiv 0\,\,(\text{mod}\,\,p^l)\text{ and } |x-\beta_i|_p\text{ minimal}\}.
\end{equation*}
Suppose we are looking at the set corresponding to the root $\beta.$ For an element $x\in \mathbb{Z}/p^l\mathbb{Z}$ to be counted we must have
\begin{equation*}
|a_d|_p |x-\beta|_p^{d} \leq |a_d|_p \prod_{i=1}^{d}|x-\beta_i|_p  = |Q(x)|_p \leq p^{-l}.
\end{equation*}
Suppose $m=v_p(a_d)$ so that $|a_d|_p = p^{-m}.$ The line above equivalently says that $x \in B(\beta,p^{-(l-m)/d})$. By reduction, we see that $x$ must take the form $x = \alpha + \lambda p$ for some root $\alpha\in\mathbb{Z}/p\mathbb{Z}$ and $\lambda \in\mathbb{Z}/p^{l-1}\mathbb{Z}.$ There are $O_d(1)$ choices for $\alpha.$ Suppose that $\alpha + \lambda p\in\overline{B}(\beta,p^{-(l-m)/d})$ for $\lambda\in \Lambda \subset \mathbb{Z}/p^{l-1}\mathbb{Z}.$ Our aim is therefore to bound $|\Lambda|.$ We claim the following:
\\ \\
{\it Claim:} If $\lambda \neq \lambda'\in \mathbb{Z}/p^{l-1}\mathbb{Z}$ then the balls $B_{\lambda} := B(\alpha+\lambda p, p^{-l})$ are disjoint.
\begin{proof}[Proof (of claim)]
For any distinct elements $\lambda \neq \lambda'\in \mathbb{Z}/p^{l-1}\mathbb{Z}$ we have 
$$|\lambda-\lambda'|_p \geq p^{-(l-2)}.$$
Now, if $x \in \overline{B}_{\lambda}\cap \overline{B}_{\lambda'}$ we have
\begin{align*}
p^{-(l-1)}&\leq |(\alpha+\lambda p) - (\alpha+\lambda'p)|_p  \\
&=|(\alpha+\lambda p)-x - (\alpha+\lambda'p-x)|_p \\
&\leq \max\{|\alpha+\lambda p-x|_p,  |\alpha+\lambda'p-x|_p\} \\
&\leq p^{-l},
\end{align*}
a contradiction. Here we have used the fact $|\cdot|_p$ is non-Archimedean.
\end{proof}

With this claim proven we can complete the proof. Fix $\alpha$. As $|\cdot|_p$ is non-Archimedean, whenever $\alpha + \lambda p\in B(\beta,p^{-(l-m)/d})$ we must actually have 
\begin{equation*}
B(\alpha + \lambda p,p^{-(l-m)/d}) = B(\beta,p^{-(l-m)/d}).
\end{equation*}
Hence we are assuming that 
\begin{equation*}
\bigcup_{\lambda \in \Lambda} B(\alpha+\lambda p, p^{-l}) \subseteq \bigcup_{\lambda \in \Lambda} B(\alpha+\lambda p, p^{-(l-m)/d}) = B(\beta,p^{-(l-m)/d}).
\end{equation*}
From the claim, these sets on the LHS are disjoint. Using the fact $\mu$ is a measure, it follows that 
\begin{align*}
\mu \bigg(\bigcup_{\lambda \in \Lambda}B(\alpha+\lambda p^l,p^{-l})\bigg) &= \sum_{\lambda \in \Lambda}\mu(B(\alpha+\lambda p^l,p^{-l})) = |\Lambda| p^{-l}.
\end{align*}
On the other hand 
\begin{equation*}
\mu(B(\beta,p^{-(l-m)/d})) = \mu(B(\beta,p^{- \left \lceil (l-m)/d \right \rceil }))  =  p^{-\left \lceil (l-m)/d \right \rceil}.
\end{equation*}
Putting these facts together, and using monotonicity of $\mu,$ we obtain 
\begin{align*}
|\Lambda| &\leq p^{l-\left \lceil (l-m)/d \right \rceil} \leq p^{l-l/d+m/d}.
\end{align*}
We are done as there are only $O_d(1)$ possibilities for $\alpha$ and $\beta$. 
\end{proof}

These results can easily be extended to the case $p^m||\mathrm{cont}(Q).$ 
\begin{corollary}\label{cor:expsumcor}
Suppose $p^m||\mathrm{cont}(Q).$ Then
 \begin{equation}\label{eq:polycong}
\#\{x \in \mathbb{Z}/p^l\mathbb{Z}: Q(x)\equiv 0\,\,(\text{mod}\,\,p^l)\} \ll_{d,a_d}  
\begin{cases}
p^l\,\,&\text{if $l\leq m,$} \\
p^{\min\{l-1,l-l/d\}})\,\,&\text{if $l>m$.}
\end{cases}
\end{equation}
\end{corollary}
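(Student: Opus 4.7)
The plan is to reduce both cases to the two bounds already established in~(\ref{eq:firstestimate}) and~(\ref{eq:padicbound}), which apply when the content is coprime to $p$. First, the easy case $l \leq m$: every coefficient of $Q$ lies in $p^m\mathbb{Z} \subseteq p^l\mathbb{Z}$, so the congruence $Q(x) \equiv 0 \pmod{p^l}$ holds for every residue class, and the trivial bound $p^l$ suffices.

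For the main case $l > m$, I would factor $Q(x) = p^m \widetilde{Q}(x)$, where $\widetilde{Q} \in \mathbb{Z}[x]$ has degree $d$, leading coefficient $a_d/p^m$, and content coprime to $p$. Then $Q(x) \equiv 0 \pmod{p^l}$ is equivalent to $\widetilde{Q}(x) \equiv 0 \pmod{p^{l-m}}$, and since every residue class modulo $p^{l-m}$ has exactly $p^m$ lifts to $\mathbb{Z}/p^l\mathbb{Z}$, the count in question equals
$$p^m \cdot \#\{x \in \mathbb{Z}/p^{l-m}\mathbb{Z} : \widetilde{Q}(x) \equiv 0 \pmod{p^{l-m}}\}.$$

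Now I apply the two available estimates to $\widetilde{Q}$. The bound~(\ref{eq:firstestimate}) gives $O_d(p^{l-m-1})$ for the inner count, hence $O_d(p^{l-1})$ overall. The bound~(\ref{eq:padicbound}) instead gives $O_d(p^{(l-m)-(l-m)/d + v_p(a_d/p^m)/d})$; using $v_p(a_d/p^m) = v_p(a_d) - m$ and simplifying, this equals $O_d(p^{l - l/d + v_p(a_d)/d})$, and since $p^{v_p(a_d)/d} \leq |a_d|^{1/d}$ is a constant depending only on $a_d$, we obtain $O_{d,a_d}(p^{l-l/d})$. Taking the better of the two bounds yields the claimed estimate.

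There is no real obstacle here; the only thing to be careful about is tracking the shift by $m$ in the $p$-adic exponent so that the leading coefficient $p^{v_p(a_d)/d}$ is genuinely absorbed into the $O_{d,a_d}$ notation. The corollary is essentially a bookkeeping extension of the preceding lemma.
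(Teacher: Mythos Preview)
Your proof is correct and follows essentially the same approach as the paper: factor out $p^m$ from $Q$, reduce to a count modulo $p^{l-m}$ for a polynomial with content coprime to $p$, apply~(\ref{eq:firstestimate}) and~(\ref{eq:padicbound}) to that reduced count, and absorb the $p^{v_p(a_d)/d}$ factor into the implied constant. The paper's argument is identical in structure and in the handling of the exponent shift by $m$.
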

\begin{proof}
The statement is clear when $l\leq m$ and so we suppose that $l>m.$ Write $Q(x) = p^m R(x)$ where $\mathrm{cont}(R)=1.$ Then we have
 \begin{align*}
 \#\{x \in \mathbb{Z}/p^l\mathbb{Z}: Q(x)\equiv 0\,\,(\text{mod}\,\,p^l)\}  
 &= p^N\cdot\#\{y \in \mathbb{Z}/p^{l-m}\mathbb{Z}: R(y)\equiv 0\,\,(\text{mod}\,\,p^{l-m})\}.
 \end{align*}
 Hence, using~(\ref{eq:firstestimate}) and~(\ref{eq:padicbound}), we can bound this by 
 $$\ll_d p^m\cdot p^{\min\{l-m-1, l-m - (l-m)/d+v_p(a_d p^{-m})/d\}} = p^{\min\{l-1, l - l/d+v_p(a_d)/d\}}.$$
 The result follows upon noting the $p^{v_p(a_d)/d}$ factor can be absorbed into the constant term, which we allow to depend on $a_d.$
 \end{proof}

The following estimate will also be convenient for us: for any $A,B\in \mathbb{Z}$ such that $A\neq 0$ and $p\nmid A,$ one has 
\begin{equation}\label{eq:linearcong}
\#\{x \in \mathbb{Z}/p^l\mathbb{Z}: p^m|Ax+B\} \leq 
\begin{cases}
p^{l-m}\,\,&\text{if $m\leq l,$} \\
1 \,\,&\text{if $m>l$.}
\end{cases}
 \end{equation}
To proceed we require the following lemma.
 \begin{lemma}\label{lemma:nonzerodisc}
Let $f\in \mathbb{Z}[x,y]$ be such that the projective variety $\{f_d(x,y)=0\}\subset\mathbb{P}_{\mathbb{Q}}^{1}$ has no non-constant repeated factors. Then the polynomial 
$$\mathrm{Disc}_t((2ab)^{d-1}f(x,t)-(2ab)^{d-1}k)$$ 
is not identically zero, and the leading coefficient is independent of $k$.
 \end{lemma}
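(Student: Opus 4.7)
The plan is to follow the scaling/limit strategy already used in Lemma~\ref{lemma:singvals} and Proposition~\ref{prop:phsmooth}. Multiplication of a polynomial by the nonzero constant $(2ab)^{d-1}$ only multiplies its discriminant by a nonzero constant factor (property (iii) of Section~\ref{section:resultants}), so it suffices to show that
\[
\mathcal{D}(x,k):=\mathrm{Disc}_t[f(x,t)-k]
\]
is nonzero as a polynomial in $x$ with leading coefficient (in $x$) independent of $k$.

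Writing $f=\sum_{i=0}^{d}f_i$ with $f_i$ homogeneous of degree $i$ and substituting $x\mapsto Xx$, $t\mapsto Xt$, the homogeneity of the $f_i$ gives
\[
\frac{f(Xx,Xt)-k}{X^d}=\sum_{i=0}^{d}X^{i-d}f_i(x,t)-\frac{k}{X^d}\longrightarrow f_d(x,t)\qquad(X\to\infty).
\]
Let $D$ denote the degree in $t$ of $f(x,t)-k$. A brief check shows $D=d$ if $x\nmid f_d$ and $D=d-1$ if $x\mid f_d$ (the case $x^2\mid f_d$ being excluded by hypothesis); crucially, in either case this equals the degree of $f_d(x,t)$ in $t$, so the formal degrees on both sides agree in the limit. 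Setting $F(t):=f(Xx,t)-k$, so that $(f(Xx,Xt)-k)/X^d=X^{-d}F(Xt)$, property (iii) of Section~\ref{section:resultants} gives
\[
\mathrm{Disc}_t\!\left[\frac{f(Xx,Xt)-k}{X^d}\right]=X^{(D-1)(D-2d)}\,\mathcal{D}(Xx,k).
\]

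Letting $X\to\infty$ and commuting the limit with the discriminant (valid by the degree match) yields $\mathrm{Disc}_t[f_d(x,t)]=\lim_{X\to\infty}X^{(D-1)(D-2d)}\mathcal{D}(Xx,k)$. A direct computation using the factorisation of $f_d$ into pairwise non-proportional linear forms over $\overline{\mathbb{Q}}$ (valid since $f_d$ has no non-constant repeated factors) shows
\[
\mathrm{Disc}_t[f_d(x,t)]=C\cdot x^{(D-1)(2d-D)}
\]
for an explicit nonzero constant $C$ depending only on the coefficients of $f_d$. Writing $\mathcal{D}(x,k)=\sum_j c_j(k)\,x^j$, the identity $\sum_j c_j(k)\,X^{j+(D-1)(D-2d)}x^j\to C\,x^{(D-1)(2d-D)}$ as $X\to\infty$ forces $c_j(k)\equiv 0$ for $j>(D-1)(2d-D)$ and $c_{(D-1)(2d-D)}(k)=C$. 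Hence $\mathcal{D}(x,k)$ has $x$-degree exactly $(D-1)(2d-D)$, is therefore nonzero, and its leading coefficient equals the constant $C$, which is independent of $k$.

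The main subtlety is the book-keeping around the possible drop in the degree $D$ from $d$ to $d-1$ when $x\mid f_d$: one must verify this drop is already reflected in the top-degree part $f_d$, so that the formal degrees on both sides of the scaling identity match, before commuting the limit with the discriminant. Once this is secured, the remaining work reduces to the explicit discriminant calculation for $f_d(x,t)$, which is nonzero precisely because $f_d$ factors into distinct linear forms.
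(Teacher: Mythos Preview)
Your proof is correct and follows essentially the same scaling/limit strategy as the paper: both extract the leading coefficient of the discriminant by rescaling so that only the top-degree part $f_d$ survives, and then appeal to the square-freeness of $f_d$ to conclude this leading coefficient is a nonzero constant independent of $k$. The only cosmetic difference is that the paper scales $t$ by $x$ itself (obtaining $\mathrm{Disc}_t[f_d(1,t)]$ directly as the leading coefficient), whereas you introduce a separate parameter $X$ and scale both variables, landing on $\mathrm{Disc}_t[f_d(x,t)]=C\,x^{(D-1)(2d-D)}$ before reading off $C$; your extra care with the degree-drop case $x\mid f_d$ is a welcome clarification of a point the paper leaves implicit.
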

\begin{proof}
We wish to show a particular resultant polynomial is not identically zero. We will follow the strategy adopted in Lemma~\ref{lemma:singvals}: once again, we will extract the leading coefficient and show this is non-zero. Since
\begin{align*}
f(x,xt) &= \sum_{i=0}^{d} f_i(x,xt) = \sum_{i=0}^{d} x^i f_i(1,t),
\end{align*}
it follows that 
$$\lim_{x\rightarrow \infty} \frac{(2ab)^{d-1}f(x,xt)-(2ab)^{d-1}k}{x^d} = (2ab)^{d-1}f_d(1,t).$$
By a similar argument to the proof of Lemma~\ref{lemma:singvals}, it follows that our discriminant polynomial has the leading coefficient 
$$\mathrm{Disc}_t((2ab)^{d-1}f_d(1,t)).$$
Now this discriminant is non-zero by our assumption on $f_d(x,y),$ and it is clearly independent of $k.$
\end{proof}

Let us write $f(x,y)=\sum_{i+j\leq d}c_{i,j}x^iy^j,$ so that 
\begin{equation}
f_{d}(x,y) = c_{d,0}x^d+c_{d-1,1}x^{d-1}y+\ldots+c_{1,d-1}x y^{d-1}+c_{0,d}y^d.
\end{equation}
If both $c_{d,0}=c_{d-1.1}=0$ then $f_d(x,y)$ will contain a a square factor. Thus with our assumptions at least one of $c_{d-1,1}$ or $c_{d,0}$ is non-zero. (Similarly for $c_{0,d}$ and $c_{1,d-1}.$) We will make use of this fact in what follows. Our main result is the following. We recall our convention that implied constants may depend on $f,g,a$ and $b$ without specifying so, and this includes dependencies on the coefficients of $f$ and $g$ as well as dependencies on the degree $d$.

\begin{lemma}\label{lemma:higherpowers}
Fix $l\geq 2.$ We have
$$
\Phi(p^l;0,0) \ll
\begin{cases}
p^{2l-2}\,\,&\text{if $2\leq l\leq d,$} \\
p^{2l-l/d-1}\,\,&\text{if $l\geq d.$}
\end{cases}
$$
\end{lemma}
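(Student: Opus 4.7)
The plan is to fix $y$ and count solutions in $x$ by splitting into two regimes depending on whether a certain discriminant vanishes modulo $p$. In the generic regime, Hensel's lemma lets us lift simple roots uniquely, giving $O(1)$ solutions in $x$ per $y$. In the exceptional regime, we lose only one factor of $p$ in counting the possible $y$-residues, which combined with the pointwise bound from Corollary~\ref{cor:expsumcor} gives the claimed savings.

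Before starting, I would perform a linear change of variables $(x,y) \mapsto (x,\,y+cx)$ to arrange that $c_{d,0} \neq 0$: since $f_d(1,t)$ is a non-zero polynomial in $t$, an integer $c$ with $f_d(1,c)\neq 0$ exists, the substitution is a bijection on $(\mathbb{Z}/p^l\mathbb{Z})^2$ for every $p,l$, and the square-free hypothesis on $f_d$ is preserved. Writing $Q_y(x) := (2ab)^{d-1}(f(x,y)-k) \in \mathbb{Z}[x]$, the leading coefficient in $x$ is now the fixed non-zero integer $(2ab)^{d-1}c_{d,0}$, so $v_p(\mathrm{cont}(Q_y))$ is bounded uniformly in $y$ and $k$. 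I would then introduce $D(y) := \mathrm{Disc}_x(Q_y(x)) \in \mathbb{Z}[y]$; applying Lemma~\ref{lemma:nonzerodisc} with the roles of $x$ and $y$ interchanged (which is legitimate because the hypothesis on $f_d$ is symmetric) yields that $D$ is a non-zero polynomial of bounded degree whose leading coefficient is a fixed non-zero integer independent of $k$, so for all but finitely many primes $p$ the reduction $D \bmod p$ is a non-zero polynomial in $\mathbb{F}_p[y]$ with at most $O(1)$ roots.

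The main estimate then splits as $\Phi(p^l;0,0) = S_1 + S_2$, where $S_1$ collects the $y$ with $D(y)\not\equiv 0 \pmod{p}$ and $S_2$ collects the rest. For $y$ in the first class, $Q_y \bmod p$ has only simple roots, at most $d$ in number, and each lifts uniquely to a solution of $Q_y(x)\equiv 0 \pmod{p^l}$ by Hensel's lemma, yielding $S_1 \ll p^l$. For $y$ in the second class, the number of such residues modulo $p^l$ is $O(p^{l-1})$, and for each, Corollary~\ref{cor:expsumcor} bounds the number of admissible $x$ by $p^{\min\{l-1,\,l-l/d\}}$; this gives $S_2 \ll p^{l-1}\cdot p^{\min\{l-1,\,l-l/d\}}$, which matches $p^{2l-2}$ when $l\leq d$ and $p^{2l-l/d-1}$ when $l\geq d$ (and $S_1$ is absorbed by $S_2$ in both ranges since $l\geq 2$). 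The finitely many primes $p$ where $D\bmod p$ degenerates are absolutely bounded in terms of $f,a,b,d$, so bounding $\#\{x\}$ via Corollary~\ref{cor:expsumcor} for every $y$---which loses exactly one extra factor of $p$ compared with the target---still yields the desired bound after absorbing into the implied constant.

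The step I expect to be the main obstacle is the careful bookkeeping required to ensure the implied constant is independent of $k$; this is secured by the $k$-independence of both $c_{d,0}$ and the leading coefficient of $D$, the latter being precisely the content of Lemma~\ref{lemma:nonzerodisc}.
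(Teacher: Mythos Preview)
Your argument is correct and follows the same overall scheme as the paper: split the count according to whether the discriminant (in one variable) vanishes modulo $p$, use Hensel's lemma in the generic regime to get $O(p^l)$, and apply Corollary~\ref{cor:expsumcor} in the exceptional regime to obtain the stated exponents. The paper fixes $x$ and counts $y$ rather than the reverse, but this is immaterial.

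The substantive difference is your preliminary change of variables $(x,y)\mapsto(x,y+cx)$ forcing $c_{d,0}\neq 0$. This guarantees that the leading coefficient of $Q_y(x)$ is a fixed non-zero integer independent of $y$, so $\mathrm{cont}(Q_y)$ is automatically coprime to $p$ for all large $p$, and Corollary~\ref{cor:expsumcor} applies uniformly with $m=0$. The paper instead works with the original coordinates and, when the leading $y$-coefficient $c_{0,d}$ vanishes, must track the linear form $c_{1,d-1}x+c_{0,d-1}$ governing the next coefficient, then further split the sum according to the exact power $p^m\Vert\mathrm{cont}_y(F(x,y))$ and invoke the linear-congruence estimate~(\ref{eq:linearcong}) to control each piece. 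Your change of variables bypasses this entire case analysis at the cost of a one-line justification that the square-free hypothesis on $f_d$ is preserved, which it is, since an invertible linear substitution permutes the projective linear factors of $f_d$ without creating repetitions.
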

\begin{proof}
For this proof it will also be helpful to define the polynomial
\begin{equation}
F(x,y) := (2ab)^{d-1}f(x,y)-(2ab)^{d-1}k.
\end{equation}
We may assume that $p$ is sufficiently large in terms of $f,a$ and $b$ as otherwise the result holds trivially. In particular, in view of Lemma~\ref{lemma:nonzerodisc}, in this regime we may assume the polynomial $\mathrm{disc}_t(F(x,t))$ has content coprime to $p.$ By Hensel's lemma, if $x\in \mathbb{Z}/p^{l}\mathbb{Z}$ is such that $\mathrm{disc}_t(F(x,t))\not\equiv 0\,\,(\text{mod}\,\,p),$ then any of the $\leq p^l$ choices for $x$ will lead to at most $O(1)$ solutions in $y$. Hence we have
\begin{equation}\label{eq:auxsumphi}
\Phi(p^l;0,0) = \sum_{\substack{x,y\in \mathbb{Z}/p^{l}\mathbb{Z} \\ F(x,y)\equiv 0\,\,(\text{mod}\,\,p^{l}) \\ \mathrm{disc}_t(F(x,t))\equiv 0\,\,(\text{mod}\,\,p) }}1+O(p^l).
\end{equation}
Let us take the sum over $x$ on the outside. The last constraint restricts $x$ to $O(1)$ values modulo $p$ and hence $O(p^{l-1})$ values modulo $p^l.$ We would like to use the estimates proved above to tackle the inner sum over $y$. To do this, we need some understanding of how $p$ divides the content of $F(x,y)$ as a polynomial in $y.$ We write this as $\mathrm{cont}_y(F(x,y)).$ 

If $f_d(0,1)=c_{0,d}\neq 0$ we have 
\begin{equation}
F(x,y) = (2ab)^{d-1}c_{0,d}y^d+(\text{lower order terms in $y$}).
\end{equation}
In the regime under consideration, $p\nmid c_{0,d}$, and so we get an overall bound  
\begin{equation}
\Phi(p^l;0,0)  \ll p^l+p^{l-1+\min\{l-1,l-l/d\}}
\end{equation}
by Corollary~\ref{cor:expsumcor}. This agrees with the result stated.

Now let us suppose that $f_d(0,1)=c_{0,d}=0.$ From the remarks above, we must therefore have $c_{1,d-1}\neq 0.$ In particular, we are assuming $p\nmid c_{1,d-1}.$ It follows that  
\begin{equation}
F(x,y) = (2ab)^{d-1}(c_{1,d-1}x+c_{0,d-1})y^{d-1}+(\text{lower order terms in $y$})
\end{equation}
for some $c_{0,d-1}\in \mathbb{Z}$. We now split the sum in~(\ref{eq:auxsumphi}) according to the power $p^m$ which divides $\mathrm{cont}_y(F(x,y)).$ Write this sum as
\begin{equation}\label{eq:sumoverk}
\sum_{m} \sum_{\substack{x\in \mathbb{Z}/p^{l}\mathbb{Z} \\ \mathrm{disc}_t(F(x,t))\equiv 0\,\,(\text{mod}\,\,p) \\ p^m||\mathrm{cont}_y(F(x,y))}}\sum_{\substack{y\in \mathbb{Z}/p^{l}\mathbb{Z} \\ F(x,y)\equiv 0\,\,(\text{mod}\,\,p^{l}) }}1. 
\end{equation}
The term with $m=0$ contributes $\ll_d p^{l-1+\min\{l-1,l-l/d\}}$ by Corollary~\ref{cor:expsumcor}. To tackle the remaining sums we will majorise by replacing the condition $p^m||\mathrm{cont}_y(F(x,y))$ with the simpler condition $p^m | (2ab)^{d-1}(c_{1,d-1}x+c_{0,d-1}).$ The terms with $m\geq l+1$ contribute 
\begin{equation}
\leq p^l\sum_{\substack{x\in \mathbb{Z}/p^l\mathbb{Z} \\ \mathrm{disc}_t(F(x,t)) \equiv 0\,\,(\text{mod}\,\,p) \\ p^{l+1}|\mathrm{cont}_y(F(x,y)) }}1  \leq p^l\sum_{\substack{x\in \mathbb{Z}/p^l\mathbb{Z} \\ p^{l+1}|(2ab)^{d-1}(c_{1,d-1}x+c_{0,d-1}) }}1 \leq p^l
\end{equation}
by~(\ref{eq:linearcong}). To finish we need to estimate the contribution to~(\ref{eq:sumoverk}) from when $m\in\{1,\ldots,l\}.$ To do this, we split into two cases.
\begin{enumerate}
	\item Suppose that $l\leq d.$ Fix $1\leq m\leq l.$ By Corollary~\ref{cor:expsumcor} we may bound the sum over $y$ by $O(p^{l-1}).$ We therefore get a contribution
	\begin{align*}
	&\ll p^{l-1} \sum_{m=1}^{l} \sum_{\substack{x\in \mathbb{Z}/p^l\mathbb{Z} \\ p^m| (2ab)^{d-1}(c_{1,d-1}x+c_{0,d-1})}}1
	\end{align*}
	Using~(\ref{eq:linearcong}) we may bound this by
	\begin{align*}
	&\ll p^{l-1} \sum_{m=1}^{l} p^{l-m} \ll p^{2l-2}.
	\end{align*}
	Putting everything together, in this case we get 
	\begin{align}
	\Phi(p^l;0,0) \ll p^{2l-2}+p^{l-1+\min\{l-1,l-l/d\}}+p^l \ll p^{2l-2}.
	\end{align}
	This agrees with the stated result.
	\\ 
	\item Now suppose that $l>d.$ In this case we split our sum at $m=l-d.$ 
	\\
	\begin{enumerate}
		\item Fix $1\leq m\leq l-d.$ We can bound the sum over $y$ by $O(p^{l-l/d})$ using Corollary~\ref{cor:expsumcor}. These terms therefore contribute
		\begin{align*}
	\ll p^{l-l/d} \sum_{m=1}^{l-d} \sum_{\substack{x\in \mathbb{Z}/p^l\mathbb{Z} \\ p^m|c_{1,d-1}x+c_{0,d-1}  }} 1.
	\end{align*}
	Using~(\ref{eq:linearcong}) we can bound this by
	\begin{align*}
	&\ll p^{l-l/d} \sum_{m=1}^{l-d}p^{l-m} \ll p^{2l-l/d-1}.
	\end{align*}
		\item Fix $l-d<m\leq l.$ We can bound the sum over $y$ by $O(p^{l-1})$ using Corollary~\ref{cor:expsumcor}. 
		These terms therefore contribute
	\begin{align*}
	&\ll p^{l-1} \sum_{m=l-d+1}^{l} \sum_{\substack{x\in \mathbb{Z}/p^l\mathbb{Z} \\ p^m| c_{1,d-1}x+c_{0,d-1} }} 1.
	\end{align*}
	Again, using equation~(\ref{eq:linearcong}) we see the overall contribution in this case is
	\begin{align*}
	\ll p^{l-1} \sum_{m=l-d+1}^{l} p^{l-m} \ll p^{l+d-2}.
	\end{align*}
	\end{enumerate}
	Now, since $l>d$ the first term dominates. Putting everything together, we conclude that 
	\begin{equation}
	\Phi(p^l;0,0)\ll p^{2l-l/d-1}+p^{l-1+\min\{l-1,l-l/d\}}+p^l \ll p^{2l-l/d-1}.
	\end{equation}
	This agrees with the result stated. 
\end{enumerate}
\end{proof}

Finally, we have the following.

\begin{prop}\label{prop:phisum}
Fix $\epsilon>0.$ For any integers $M,N$ we have
\begin{equation}
\sum_{0<|h|\ll B} \frac{\Phi(h;M,N)}{|h|^{2-1/d+\epsilon}} \ll_{\epsilon} \Delta_f(M,N,k)^{\epsilon} B^{\epsilon}.
\end{equation}
\end{prop}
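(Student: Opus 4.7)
The strategy is to reduce to an Euler product via the multiplicativity identity $\Phi(jl; M, N) = \Phi(j; \overline{l}M, \overline{l}N)\Phi(l; \overline{j}M, \overline{j}N)$ noted at the start of Section~7. This identity shows that $|\Phi(h; M, N)|$ factors across prime powers, up to reparametrisation of the arguments $M, N$ by units coprime to each prime. The two local estimates available — the square-root cancellation bound of Lemma~\ref{lemma:primes} and the trivial bound $|\Phi(p^l; M, N)| \leq \Phi(p^l; 0, 0)$ (valid by the triangle inequality, using Lemma~\ref{lemma:higherpowers}) — depend on $(M, N)$ only through $(p, \Delta_f(M, N, k))$. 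Since every building block of $\Delta_f$ in the proof of Lemma~\ref{lemma:linesinphi} (namely $f_d(N,-M)$, $(\partial f_d/\partial y)(N,-M)$, $A_j(M,N)$, $B_j(M,N)$) is a homogeneous polynomial in $(M,N)$, this divisibility is invariant under scaling by units mod $p$. Hence it suffices to bound the dominating Euler product
\[
\sum_{h \geq 1} \frac{|\Phi(h; M, N)|}{h^{2 - 1/d + \epsilon}} \ll \prod_p \left(1 + \sum_{l \geq 1} \frac{|\Phi(p^l; M, N)|}{p^{l(2 - 1/d + \epsilon)}}\right).
\]

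Plugging in Lemma~\ref{lemma:primes} gives an $l=1$ contribution at $p$ of size $(p,\Delta_f)^{1/2} p^{-(3/2 - 1/d + \epsilon)}$. For $l\geq 2$, Lemma~\ref{lemma:higherpowers} gives, after dividing by $p^{l(2-1/d+\epsilon)}$, a contribution of size $p^{-2 + l/d - l\epsilon}$ in the regime $2 \leq l \leq d$ (worst at $l = d$, where it equals $p^{-1 - d\epsilon}$) and of size $p^{-1 - l\epsilon}$ in the regime $l \geq d$. In particular, every higher-power contribution is $\ll p^{-1 - \epsilon}$, so $\sum_{l \geq 2}$ contributes $\ll p^{-1-\epsilon}$ at each prime. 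For primes $p \nmid \Delta_f$ the $p$-factor is therefore $1 + O(p^{-(3/2 - 1/d + \epsilon)})$; since $3/2 - 1/d > 1$ whenever $d \geq 3$, the product over such primes converges absolutely to a constant depending only on $f, g, a, b, \epsilon$.

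For the finitely many primes $p \mid \Delta_f(M, N, k)$, each factor is $1 + O(p^{-(1 - 1/d + \epsilon)})$; since $1 - 1/d < 1$ the product is not absolutely convergent, but each factor is bounded by an absolute constant, so the partial product over $p \mid \Delta_f$ is at most $C^{\omega(\Delta_f)} \ll_\epsilon |\Delta_f(M, N, k)|^\epsilon$ by the standard estimate $\omega(n) \ll \log n / \log\log n$. Combining these two contributions yields the claimed bound, with the $B^\epsilon$ factor in the statement providing harmless slack (our actual bound is $\ll_\epsilon |\Delta_f(M,N,k)|^\epsilon$). The main obstacle is bookkeeping rather than a genuine difficulty: one must verify that all four regimes ($l = 1$, $2 \leq l < d$, $l = d$, $l > d$) give tail contributions $\ll p^{-1 - \epsilon}$, and that the unit-scaling invariance of $(p, \Delta_f)$ noted above legitimately decouples the Euler product.
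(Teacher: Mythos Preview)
Your argument is correct and follows essentially the same Euler-product route as the paper, combining Lemma~\ref{lemma:primes} at $l=1$ with Lemma~\ref{lemma:higherpowers} for $l\geq 2$. One small slip: for $p\nmid\Delta_f$ the local factor is $1+O(p^{-1-\epsilon})$ (the $l\geq 2$ tail you computed dominates the $l=1$ term since $3/2-1/d+\epsilon>1+\epsilon$ for $d\geq 3$), not $1+O(p^{-(3/2-1/d+\epsilon)})$ as you wrote --- but this still gives an absolutely convergent product, so your conclusion stands and is in fact slightly sharper than the paper's, which simplifies the tail to $O(1/p)$ and then picks up the $B^\epsilon$ via Mertens.
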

\begin{proof}
Fix $0<\delta<1/d.$ By multiplicativity, and the fact $\Phi(-h;M,N)=\Phi(h;M,N),$ we can bound our sum by
\begin{equation*}
\sum_{0<|h|\ll B} \frac{\Phi(h;M,N)}{|h|^{2-\delta}} \ll \prod_{p\ll B}\bigg(1+\frac{\Phi(p;M,N)}{p^{2-\delta}}+\sum_{l=2}^{\infty}\frac{\Phi(p^l;0,0)}{p^{l(2-\delta)}}\bigg).
\end{equation*}
We first deal with the contribution from higher powers. Using the bounds established in Lemma~\ref{lemma:higherpowers}, this contribution is bounded by
\begin{align*}
&\ll \frac{1}{p^2} \sum_{l=2}^{d}p^{l\delta } + \frac{1}{p}\sum_{l=d+1}^{\infty}\frac{1}{p^{l/d-\delta l}} \ll_{\delta} \frac{1}{p^{2-d\delta}} + \frac{1}{p^{2+1/d-(d+1)\delta}}  \ll_{\delta} \frac{1}{p},
\end{align*}
using the fact $\delta<1/d.$ Now, fix positive constants $A_i>0$ which may depend on $f,a,b$ and $\delta.$ We recall Lemma~\ref{lemma:primes}, which says that 
\begin{equation*}
|\Phi(p;M,N)| \ll p^{1/2}(p,\Delta_f(M,N,k))^{1/2}.
\end{equation*}
We see the contribution from those primes dividing $\Delta_f(M,N,k)$ is bounded by
\begin{equation*}
\prod_{p|\Delta_f(M,N,k)} \bigg(1+\frac{A_1}{p^{1-\delta}}+\frac{A_2}{p}\bigg) \ll_{\delta} \tau(\Delta_f(M,N,k)) \ll_{\delta,\epsilon} \Delta_f(M,N,k)^{\epsilon}.
\end{equation*}
The contribution from the remaining terms is bounded by
\begin{align*} 
\prod_{p\ll B}\bigg(1+\frac{A_3}{p^{3/2-\delta}}+\frac{A_4}{p}\bigg) &\ll_{\delta}  \prod_{p\ll B}\bigg(1+\frac{A_5}{p}\bigg) \\
&\ll_{\delta} \prod_{p\ll B}\bigg(1+\frac{1}{p}\bigg)^{A_5} \ll_{\delta,\epsilon} B^{\epsilon}
\end{align*}
by Mertens' estimate. The proof of the proposition is completed upon taking $\delta=1/d-\epsilon.$
\end{proof}


\section{Final estimates}
Let us recall our work so far. Putting together equations~(\ref{eq:removesing}) and~(\ref{eq:applysieve}) we arrive at the bound 
\begin{equation}
M_{f,g}(B;k) \ll_{\epsilon} \sum_{\substack{0<|h| \ll_{a,b} B  \\ K_{h},P_{h}\text{ smooth}}}\,\,\,\,\,\frac{1}{(\#\mathcal{P})^{2}}\sum_{p,q\in\mathcal{P}}\bigg|\sum_{i,j\in\{0,1,2\}}c_{i,j}(\alpha)S_{i,j}(p,q)\bigg|+B^{3/2+\epsilon}
\end{equation}
where
\begin{align}
S_{i,j}(p,q)  &= \frac{1}{(pqh)^{2}} \sum_{-pq|h|/2<m,n\leq pq|h|/2} \Gamma(B,m)\Gamma(B,n)\Psi_{i,j}(m,n).
\end{align}
Using Lemma~\ref{lemma:multi} we were able to decompose the exponential sums $\Psi_{i,j}$ into the simpler exponential sums $\Sigma_t$ and $\Phi$ which we were able to estimate individually. In this section we aim to bring everything together. 

Firstly, we isolate the main term contribution to the above sum. To this end, let $L_{i,j}(p,q)$ denote the the contribution from the term $(m,n)=(0,0)$. Note that $\Gamma(B,0)=\left\lfloor B \right \rfloor,$ and so, with the estimate $ \left\lfloor B \right \rfloor = B+O(1),$ we obtain
\begin{equation}\label{eq:mainterm}
L_{i,j}(p,q) =  \frac{B^2\Psi_{i,j}(0,0)}{(pqh)^{2}}+O\bigg(\frac{B\Psi_{i,j}(0,0)}{(pqh)^{2}}\bigg).
\end{equation}
Let us concentrate our attention on this first term. If $p\neq q,$ then for $i,j\in \{0,1,2\},$ we have
\begin{align} \nonumber
\Psi_{i,j}(0,0) &= \Sigma_i(p;0,0)\Sigma_j(q;0,0)\Phi(h;0,0) \\
&= \max\{1,i\} \max\{1,j\}[p^2+O(p)][q^2+O(q)]\Phi(h;0,0)
\end{align}
using Lemma~\ref{lemma:multi} for the decomposition of $\Psi_{i,j}$ and Lemma~\ref{lemma:browningexp} for the asymptotics for $\Sigma_t.$ Recalling the definition of the constants $c_{i,j}(\alpha)$ in Proposition~\ref{prop:sieve}, we obtain an expression
\begin{align}
\sum_{i,j\in\{0,1,2\}}c_{i,j}(\alpha)L_{i,j}(p,q) &= \frac{B^2\Phi(h;0,0)}{h^2}(\alpha-1)^2+O\bigg(\frac{B^2\Phi(h;0,0)}{\min\{p,q\}h^2}\bigg).
\end{align} 
Thus we may take $\alpha=1$ to eliminate the main term. In this case, including the error present in equation~(\ref{eq:mainterm}) and using the bound $\Psi_{i,j}(0,0) \ll p^2q^2\Phi(h;0,0)$ which follows from our work so far, we obtain
\begin{equation}
\sum_{i,j\in\{0,1,2\}}c_{i,j}(1)L_{i,j}(p,q) \ll \frac{B^2\Phi(h;0,0)}{\min\{p,q\}h^2}+\frac{B\Phi(h;0,0)}{h^{2}}
\end{equation}
for the total contribution to the main term from terms with $p\neq q.$ We see the first term dominates in our range of $p$ and $q$. If $p=q$ then we bound trivially, to obtain 
\begin{align}
\sum_{i,j\in\{0,1,2\}}c_{i,j}(1)L_{i,j}(p,p)  &\ll \frac{B^2\Phi(h;0,0)}{h^2}.
\end{align}
Now note that, from Lemma~\ref{lemma:primes} and Lemma~\ref{lemma:higherpowers}, it follows that we can bound  $\Phi(h;0,0) \ll A^{\omega(h)}h^2/\mathrm{rad}(|h|),$ where $A>0$ is a constant and $\mathrm{rad}(|h|)$ denotes the radical of the non-zero integer $h$. Thus 
\begin{align} \nonumber
\sum_{0<|h|\ll B}\frac{\Phi(h;0,0)}{h^2} & \ll \sum_{0<|h|\ll B}\frac{A^{\omega(h)}}{\mathrm{rad}(|h|)} \\\nonumber
&\ll_{\epsilon} B^{\epsilon}\sum_{0<|h|\ll B}\frac{A^{\omega(h)}}{h^{\epsilon}\mathrm{rad}(|h|)} \\\nonumber
&\ll_{\epsilon} B^{\epsilon}\prod_{p\ll B}\bigg(1+\frac{A}{p}\bigg[\frac{1}{p^{\epsilon}}+\frac{1}{p^{2\epsilon}}+\ldots\bigg]\bigg) \\
&\ll_{\epsilon} B^{\epsilon}\prod_{p\ll B}\bigg(1+\frac{A}{p^{1+\epsilon/2}}\bigg) \ll_{\epsilon} B^{\epsilon}.
\end{align}
Thus, the total contribution from the main term, using the asymptotic $\#\mathcal{P}\sim Q/\log{Q},$ is
\begin{align} \nonumber
 \frac{B^2\log^2{Q}}{Q^2}\sum_{0<|h|\ll B}\frac{\Phi(h;0,0)}{h^2}&\bigg[\sum_{\substack{p,q\in\mathcal{P} \\ p\neq q }}\frac{1}{\min\{p,q\}}+\sum_{p\in \mathcal{P}}1\bigg] \\ \nonumber
 &\ll \frac{B^2\log{Q}\log\log{Q}}{Q}\sum_{0<|h|\ll B}\frac{\Phi(h;0,0)}{h^2} \\ \label{eq:maintermcont}
 &\ll_{\epsilon}  \frac{B^{2+\epsilon}}{Q}. 
\end{align}
We now focus on the contribution from the remaining terms. From now on we put $T_{i,j}= S_{i,j}-L_{i,j}.$ It follows that
\begin{align}
T_{i,j}(p,q) &\ll  \frac{1}{(pqh)^2} \sum_{\substack{-pq|h|/2 < m,n \leq pq|h|/2 \\ (m,n)\neq (0,0)}}\min\bigg\{B, \frac{pq |h|}{|m|}\bigg\}\min\bigg\{B, \frac{pq |h|}{|n|}\bigg\}|\Psi_{i,j}(m,n)|.
\end{align}
We would like to find a pointwise estimate for the exponential sums $\Psi_{i,j}.$ There are two regimes to consider.
\begin{enumerate}
	\item If $p\neq q,$ then recalling Lemma~\ref{lemma:multi} and Lemma~\ref{lemma:browningexp} we obtain
	$$|\Psi_{i,j}(m,n)| \ll pq(pq,m,n)|\Phi(h;\overline{pq}m,\overline{pq}n)|.$$
	Here $\overline{pq}$ is the multiplicative inverse of $pq$ modulo $|h|.$
	\item If $p=q$, then again recalling the results above we have
$$|\Psi_{i,j}(m,n)| \ll 1_{p|(m,n)} p^3(p,m/p,n/p) \Phi(h; \overline{p}m/p, \overline{p}n/p).$$
Here $\overline{p}$ is the multiplicative inverse of $p$ modulo $|h|.$ 
\end{enumerate}
Now if $p=q$ then $1_{p|(m,n)} p^3(p,m/p,n/p) \leq pq(pq,m,n).$
We conclude that the first bound holds in all cases. Thus we may write
\begin{align}
T_{i,j} &\ll \frac{1}{pqh^2} \sum_{\substack{-pq|h|/2 < m,n \leq pq|h|/2 \\ (m,n)\neq (0,0)}}\min\bigg\{B, \frac{pq |h|}{|m|}\bigg\}\min\bigg\{B, \frac{pq |h|}{|n|}\bigg\}(pq,m,n)|\Phi(h;\overline{pq}m,\overline{pq}n)|.
\end{align}
There are now 3 regimes to consider.
\begin{enumerate}
	\item The contribution from when $m=0$ is 
	\begin{align}
	\frac{B}{|h|}  \sum_{\substack{-pq|h|/2 < n\leq pq|h|/2 \\ n\neq 0}}\frac{(pq,n)|\Phi(h;0,\overline{pq}n)|}{|n|}.
	\end{align}	
	\item The contribution from when $n=0$ is 
	\begin{align}
	\frac{B}{|h|}  \sum_{\substack{-pq|h|/2 < m \leq pq|h|/2 \\ m\neq 0}}\frac{(pq,m)|\Phi(h;\overline{pq}m,0)|}{|m|}.
	\end{align}	
	\item The contribution from when $mn\neq 0$ is 
	\begin{align}
  pq\sum_{\substack{-pq|h|/2 < m,n \leq pq|h|/2 \\  mn \neq0}}\frac{(pq,m,n)|\Phi(h;\overline{pq}m,\overline{pq}n)|}{|m||n|}.
	\end{align}
\end{enumerate}
To evaluate these sums we will now bring the sum over $h$ on the inside. Recall Proposition~\ref{prop:phisum}, which states that for any $\epsilon>0$ we have 
\begin{equation}
\sum_{0<|h|\ll B} \frac{\Phi(h;M,N)}{|h|^{2-1/d+\epsilon}} \ll_{\epsilon} \Delta_f(M,N,k)^{\epsilon}B^{\epsilon},
\end{equation}
and the corresponding results which follow by partial summation. From Lemma~\ref{lemma:linesinphi} we have the size bound
\begin{equation}
|\Delta_f(M,N,k)| \leq |k| \max\{|M|,|N|\}^{d^2}.
\end{equation}
We are also assuming $0<|k| \ll_f B^{d+1}$ (see~(\ref{eq:ksize})). In our range of variables we thus have 
\begin{equation}
\Delta_f(\overline{pq}m,\overline{pq}n,k)^{\epsilon} \ll_{\epsilon} B^{\epsilon}
\end{equation}
It follows that the total contribution from the terms $T_{i,j}$ is bounded by
\begin{align}
\frac{B^{2-1/d+\epsilon}\log^{2}{Q}}{Q^{2}}\sum_{\substack{p,q\in \mathcal{P}}} \bigg[\sum_{\substack{-pq|h|/2 < n \leq pq|h|/2 \\ n\neq 0}}\frac{(pq,n)}{|n|}+pq\sum_{\substack{-pq|h|/2 < m,n \leq pq|h|/2 \\  mn \neq0}}\frac{(pq,m,n)}{|m||n|}\bigg].
\end{align}
This in turn is bounded by
\begin{align}\label{eq:errorterms}
\frac{B^{2-1/d+\epsilon}\log^{2}{Q}}{Q^{2}}\sum_{\substack{p,q\in \mathcal{P}}}pq &\ll_{\epsilon} Q^2B^{2-1/d+\epsilon}.
\end{align}
Putting together equation~(\ref{eq:maintermcont}) and equation~(\ref{eq:errorterms}), we get a final bound
\begin{equation}
M_{f,g}(B;k) \ll_{\epsilon} B^{\epsilon}\bigg(Q^2B^{2-1/d}+ \frac{B^{2}}{Q} \bigg) \ll B^{2-1/(3d)+\epsilon},
\end{equation}
where we take $Q=B^{1/(3d)}$ to balance the error terms. The result stated in Theorem~\ref{theo:generalmain} follows upon noting that the exponent here is strictly less than $(2-1/(50d)).$

\end{document}